\newtheorem{Thm}[equation]{Theorem}
\newtheorem{Cor}[equation]{Corollary}
\newtheorem{Lem}[equation]{Lemma}
\newtheorem{Pro}[equation]{Proposition}
\newtheorem{Que}[equation]{Question}
\theoremstyle{definition}
\newtheorem{Def}[equation]{Definition}
\theoremstyle{remark}
\newtheorem{Rem}[equation]{Remark}
\newtheorem{claim}[equation]{Claim}
\numberwithin{equation}{section}
\renewcommand{\c@figure}{\c@equation}
\newcommand{\eop}[1]{{\flushright\hfill\fbox{\bf #1}}}
\newcommand{\ack}{\noindent{\bf Acknowledgement.}}
\begin{document}

\title{Rigidity of flat holonomies}

\author{G\'erard Besson}
\address{CNRS, Universit\'e Grenoble Alpes\\ 
Institut Fourier, CS 40700\\
38058 Grenoble C\'edex 09, France}
\urladdr{http://www-fourier.ujf-grenoble.fr/~besson}
\email{g.besson@univ-grenoble-alpes.fr}

\author{Gilles Courtois}
\address{CNRS, Institut de Math\'ematiques de Jussieu-Paris Rive Gauche, UMR 7586\\Sorbonne Universit\'e, UPMC Univ Paris 06,
Univ Paris Diderot, Sorbonne Paris Cit\'e, F-75005, Paris, France}
\urladdr{https://webusers.imj-prg.fr/~gilles.courtois}
\email{gilles.courtois@imj-prg.fr}

\author{Sa'ar Hersonsky}
\address{Department of Mathematics\\ 
University of Georgia\\ 
Athens, GA 30602, USA}
\urladdr{http://www.math.uga.edu/~saarh}
\email{saarh@uga.edu}

\thanks{}
\keywords{Negatively curved Riemannian manifolds, rigidity, horospheres, holonomy}
\subjclass[2000]{Primary: 58B20; Secondary: 57N16}
\date{\today}

\begin{abstract}

We prove that the existence of one horosphere in the universal cover of a closed
Riemannian manifold of dimension $n \geq 3$ with strongly $1/4$-pinched or relatively $1/2$-pinched sectional curvature, on which 
the stable holonomy along one horosphere coincide with the Riemannian parallel transport,  implies that the manifold is homothetic to a real hyperbolic manifold. 
\end{abstract}

\maketitle

\section{Introduction}
Mostow's seminal rigidity theorem  \cite{Mos}  asserts that the geometry of a closed hyperbolic manifold of dimension greater than two is determined by its fundamental group. Inspired by Mostow's theorem, we undertake a study of related, yet, more general themes. In this paper, 
we look at natural geometric submanifolds, the horospheres, and ask to what extent do these determine the geometry of the whole manifold.   Precisely, we are concerned  with the following general question:

\begin{Que}
\label{qu:Qu1}
Does the geometry of the horospheres of a closed, negatively curved manifold of dimension greater than two, determine the geometry of the whole manifold?
\end{Que}

In general, there are very few answers to Question~\ref{qu:Qu1}, and all of these relate the {\it extrinsic} geometry of the horospheres to the geometry of $M$. For instance, by combining \cite{FoL} and \cite{BCG1} (see \cite{BCG1}, Corollary 9.18) one shows that if all the horospheres have constant mean curvature, then the underlying  manifold is locally symmetric (of negative curvature).  Let us recall that the mean curvature of a hypersurface is related to the derivative of its volume element in the normal direction to the hypersurface, and hence the mean curvature is an extrinsic quantity. In this paper, our main hypothesis is to relax the  assumption on the sectional curvature in Mostow's theorem  and allow it to be strictly quarter negatively curved pinched. In this case constant mean curvature of the horospheres only occur for real hyperbolic manifolds (up to homothety). In contrast,  we would like to  emphasize that we only  consider the intrinsic properties of the induced metric on the horospheres.

\smallskip

Before stating our main theorem, let us recall a few important features of the manifolds under consideration and  results that are related to our work in this paper.  Let $M$ denote an $(n+1)$-dimensional, closed, Riemannian manifold  endowed with a metric of negative sectional curvature, $n\geq 2$. It follows from the Cartan-Hadamard theorem that  $\tilde M$, the universal cover of $M$, is diffeomorphic to $\mathbb{R}^{n+1}$. Let $\tilde M$ be endowed with the pull-back Riemannian metric from $M$, under the natural projection $\pi: {\tilde M} \to M$. The geometric boundary $\partial\tilde M$ of $\tilde M$,  is the set of equivalence classes of geodesic rays in $\tilde M$, where two geodesic rays are equivalent if they remain at a bounded Hausdorff distance. We recall that, in our context, it is homeomorphic to $\mathbb{S}^n$.

\smallskip

Given a point, $x_0\in \tilde M$, and a unit tangent vector, $\tilde v\in T_{x_0} \tilde M$, we let  $c_{\tilde v}$  denote the unique geodesic ray determined by  $c_{\tilde v}(0)=x_0$ and $\dot c _{\tilde v}(0) =\tilde v$. It is well known that the map, $\tilde v \in T_{x_0} \tilde M \mapsto [c_{\tilde  v}] \in \partial \tilde M$, defines a homeomorphism between the unit sphere in $T_{x_0} \tilde M$ and $\partial \tilde M$. Given a point $\xi = [c_{\tilde v}] \in \partial \tilde M$, the Busemann function 
$B_\xi(\cdot)$ is then defined for all $\xi \in \partial \tilde M$ and for all $x \in \tilde M$, by $B_{\xi}(x)= \lim_{t\to \infty} (d(x,c_{\tilde v} (t)) - d(x_0, c_{\tilde v} (t)))$. 

\smallskip

Since $M$ is a closed negatively curved manifold,  for each $\xi \in \partial \tilde M$ it is known that 
the Busemann function $B_\xi(\cdot)$ is $C^\infty$-smooth.  Furthermore, 
for any $t\in\mathbb{R}$,
 the level set  $$H_{\xi}(t) =\left\{ x\in {\tilde M};\, B_\xi(x)= t\right\}$$ is a smooth
submanifold of $\tilde M$ which is diffeomorphic to $\mathbb{R}^{n}$ and which is  called a {\it horosphere} centred at $\xi$. The sublevel set $$HB_{\xi}(t) =\left\{ x\in {\tilde M};\, B_\xi(x)\leq  t\right\}$$ is called a {\it horoball}.
It follows that  horospheres inherit a complete Riemannian metric induced by the restriction of the metric of $\tilde M$. For instance, if 
$(M,g)$ is a real hyperbolic manifold,
every horosphere of $\tilde M$ is flat and therefore isometric to the Euclidean space $\mathbb R^{n}$.

\smallskip

So far we defined horospheres as special submanifolds in $\tilde M$. However,  a dynamical perspective turns out to be important in the proof of the main theorem. Let $\tilde p : T^1 \tilde M \rightarrow \tilde M$ and $p : T^1M \rightarrow M$ denote  the natural projections. The geodesic flow $\tilde g_t$ on $T^1\tilde M$ is known to be an Anosov flow, that is, the tangent bundle  $TT^1 \tilde M$ admits a decomposition as $TT^1\tilde M = \mathbb R X \oplus \tilde E^{ss} \oplus \tilde E^{su}$, where $X$ is the vector field generating the geodesic flow and
$\tilde E^{ss}$, $\tilde E^{su}$ are the strong stable and strong unstable distributions, respectively. These distributions are known to be integrable, invariant under the differential $d \tilde g_t$ of the geodesic flow, and to give rise to two transverse foliations of $T^1\tilde M$, $\tilde W^{ss}$ and $\tilde W^{su}$, the strong stable and strong unstable foliations, respectively, whose leaves are smooth submanifolds. A classical property of these foliations is that in general they are transversally H\" older with exponent less than one, and when the sectional curvature, denoted by K is strictly $1/4$-pinched (i.e., $-4<K\leq-1$), they are transversally $C^1$ (see  \cite[page 226]{Hi-Pu-Sh}), but we do not use such a regularity.

\smallskip

A link between the two point of views on horospheres is the following. For $\tilde v\in T^1\tilde M$, the strong stable leaf $\tilde W^{ss}(\tilde v)$ through $\tilde v$ is defined to be  the set of unit vectors 
$\tilde w\in T^1\tilde M$ which are normal to the horosphere $H_\xi (t)$ and pointing inward the horoball
$HB_\xi (t)$ in the direction of $\xi = c_{\tilde v}(+\infty)$, with $t=B_\xi(\tilde p({\tilde v}))$ so that $H_\xi (t) = \tilde p (W^{ss} ({\tilde v}))$. 
\smallskip

With this notation in place, let us now describe our main theorem and the foundational work we build upon.
In Section~\ref{connection}, we will recall the construction of the {\sl stable holonomy}. The notion of stable holonomy goes back to the work of Bonatti-Mont-Viana,\cite{BMV}, and has been extensively studied by various authors, M. Viana, \cite{Viana} (also in the non-uniformly hyperbolic setting), Avila-Viana, \cite{AV}, Santamaria-Viana, \cite{ASV}, Kalinin-Sadovskaya, \cite{KS}, in the context of partially hyperbolic systems. In our setting, it is a family, for each horosphere, of isomorphisms between the tangent spaces at
any two points of it. Given $\xi \in \partial \tilde M$ and $x,y$ a pair of points on
a horosphere $H_\xi$ centered at $\xi$, we will informally denote $\Pi ^\xi (x,y)$ the isomorphism
between the tangent spaces to this horosphere at $x$ and $y$ and the stable holonomy will be the collection of all these 
isomorphisms $\Pi ^\xi (x,y)$.   
The stable holonomy
was originally constructed as a family, for each strong stable leaf of the geodesic flow, of isomorphisms $\mathcal H(\tilde v, \tilde w)$ between the tangent spaces to this leaf at any pair of points $\tilde v, \tilde w$ and not on the horospheres as we will present here. However, the two constructions 
are equivalent since there is the conjugation 
$\mathcal H(\tilde v, \tilde w) =  D\tilde p (\tilde w)^{-1} \circ \Pi ^\xi (x,y) \circ D\tilde p (\tilde v)$, where $\tilde p \tilde v=x, \tilde p \tilde w=y$
and $c_{\tilde v} (+\infty) = c_{\tilde w} (+\infty)=\xi$ (see more in the appendix).  This construction, which holds in the general setting of linear cocycles over partially hyperbolic diffeomorphisms, requires the 'fiber bunched' condition, as is
\cite{KS} (more on this in section~\ref{connection}). In the context of the geodesic flow of a negatively curved closed manifold, the fiber bunched condition is a consequence of a pinching condition on the sectional curvature. We will consider two kinds of pinching. The strong $1/4$-pinching of the curvature means that for every $x\in M$, the sectional curvature $K(x)$ satisfies
\begin{equation}\label{strongpinching}
-4<K(x) \leq-1.
\end{equation}
Given $a>0$, the curvature of $M$ is said to be relatively $a$-pinched if there exists a strictly negative function 
$C : M \to \mathbb R_{<0}$ such that for every $x\in M$, the sectional curvature satisfies 
\begin{equation}\label{relativepinching}
C(x) \leq  K(x) < aC(x).
\end{equation}  
In general, none of these two pinching conditions imply the other.
To the best of our knowledge, a stable holonomy cannot be defined without {\it some} pinching condition on the curvature.
In the sequel, we will describe the construction of the stable holonomy on the horospheres under the strong $1/4$-pinching or
the relative $1/2$-conditions.
On the other hand, every horosphere $H_\xi (s)$ carries the Riemannian metric induced by the one of $\tilde M$. In particular,
for every pair of sufficiently close points $x,y \in H_\xi (s)$, there is a unique minimizing geodesic of $H_\xi (s)$ joining them.
Hence, we  may consider the parallel transport associated to the Levi-Civita connecion of the induced 
metric on $H_\xi (s)$, denoted by $P^\xi _s (x,y)$, between the tangent spaces to
$H_\xi (s)$ at these points $x$ and $y$.
As mentioned before, in the case of $K\equiv -1$, the induced Riemannian metric on horospheres is flat
and the stable holonomy $\Pi ^\xi _s (x,y)$ and the parallel transport $P^\xi _s (x,y)$ coincide for every
pairs of points $x$ and $y$ on $H_\xi (s)$. Our main result is that the {\it converse} is true among 
strongly $1/4$-pinched or $1/2$-relatively pinched negatively curved manifolds.

\begin{Thm}[Main Theorem]
\label{main-thm}   
Let $M$ be a closed, Riemannian manifold of dimension $n\geq 3$, endowed with a strongly $1/4$-pinched or $1/2$-relatively pinched negatively curved sectional curvature.  Assume that there exists 
$\xi \in \partial \tilde M$ and $s\in\mathbb{R}$ such that for every pair of points $x,y \in H_\xi (s)$ joined by a unique minimizing 
geodesic, the stable holonomy $\Pi ^\xi _s (x,y)$ is identical to the parallel transport $P_s ^\xi (x,y)$. Then $(M,g)$ is homothetic to a real hyperbolic manifold.
\end{Thm}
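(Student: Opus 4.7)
The stable holonomy produced in \cite{KS} and \cite{ASV} is, by construction, a path-independent cocycle of isomorphisms:
\[
\Pi^\xi_s(x,z) = \Pi^\xi_s(y,z)\circ\Pi^\xi_s(x,y).
\]
Under the hypothesis $\Pi^\xi_s(x,y)=P^\xi_s(x,y)$ for pairs of nearby points joined by a unique minimizing geodesic on $H_\xi(s)$, the Levi-Civita parallel transport of the induced metric therefore depends only on the endpoints. Consequently the local Riemannian holonomy of $H_\xi(s)$ is trivial, so $H_\xi(s)$ is flat; being complete and simply connected (diffeomorphic to $\mathbb{R}^{n-1}$), it is isometric to Euclidean space.

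\textbf{Step 2 (propagation to every horosphere).} Both $\Pi^\xi_s$ and $P^\xi_s$ are $\pi_1(M)$-equivariant, so the identity $\Pi=P$ descends to every translate $H_{\gamma\xi}(s_\gamma)=\gamma\cdot H_\xi(s)$, $\gamma\in\pi_1(M)$. Since $M$ is compact and negatively curved, the horocycle flow on $T^1M$ is minimal, and therefore the $\pi_1(M)$-orbit of any horosphere is dense in the space of horospheres $\partial\tilde M\times\mathbb{R}$. The strict $1/4$-pinching assumption ensures that the strong stable foliation is transversally $C^1$ (\cite[p.~226]{Hi-Pu-Sh}), so $\Pi^\xi_s(x,y)$ depends continuously on $(\xi,s,x,y)$; parallel transport is manifestly continuous as well. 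Taking closures shows that $\Pi=P$ on every horosphere, so in particular every horosphere is flat. Moreover the shape operator $S$ of each horosphere is intrinsically attached to the horosphere foliation and hence commutes with the stable holonomy; combined with $\Pi=P$, this implies that $S$ is Levi-Civita parallel on each horosphere, so its eigenvalues are constant there.

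\textbf{Step 3 (Lyapunov rigidity).} I would then apply Butler's theorem, which asserts that a closed negatively curved manifold of dimension at least three is homothetic to a real hyperbolic one provided that, along every periodic orbit of the geodesic flow, all eigenvalues of $d\tilde g_{l(v)}|_{E^{ss}(v)}$ share a common modulus. Now the identity $\Pi=P$ available at every horosphere forces the stable holonomy to act by Riemannian isometries between horosphere tangent spaces. Since $\Pi$ can be realised as the asymptotic angular part of the stable cocycle $dg_t|_{E^{ss}}$, this means that the return map $d\tilde g_{l(v)}|_{E^{ss}(v)}$ on every periodic orbit is the composition of a scalar contraction with an orthogonal transformation; its complex eigenvalues therefore all have the same modulus, and Butler's theorem produces the conclusion that $(M,g)$ is homothetic to a real hyperbolic manifold.

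\textbf{Main obstacle.} I expect the subtlest point to be the reduction carried out in Step~3, namely translating ``$\Pi$ is a Riemannian isometry everywhere'' into equality of the moduli of eigenvalues of the linearised return map. Making this rigorous requires identifying $\Pi$ with a renormalised limit of $dg_t|_{E^{ss}}$ through the Riccati equation for the shape operator, exploiting its horosphere-wise parallelism together with the pinching hypothesis to force scalar growth of stable Jacobi fields in a parallel orthonormal frame. A secondary difficulty lies in the continuity argument of Step~2: the $C^1$ transversal regularity afforded by strict $1/4$-pinching is indispensable, since without it $\Pi$ is only transversally H\"older and the density-plus-closure argument could fail.
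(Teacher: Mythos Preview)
Your Steps 1 and 2 are broadly in line with the paper's Propositions \ref{levi-civita-flat}, \ref{local-global} and Corollary \ref{Cor}, though note that the paper explicitly does \emph{not} invoke the $C^1$ transversal regularity of the strong stable foliation; it relies instead on the $C^r$-continuity of plaques furnished by Shub's stable manifold theorem together with the uniform estimate of Claim \ref{claim}. Your use of $C^1$ regularity is not wrong, but it is not the route taken.

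The genuine gap is in Step 3. You assert that because $\Pi$ acts by Riemannian isometries, the periodic return map $Dg_{l(v)}|_{E^{ss}(v)}$ must be a scalar contraction composed with an orthogonal map. This does not follow. The Heintze groups $G_A$ with $A$ non-scalar already show the obstruction: on those spaces the horospheres centred at the distinguished boundary point are flat, the stable holonomy equals Euclidean parallel transport (so $\Pi$ is certainly isometric), yet the flow derivative $e^{tA}$ is not conformal. What fails in that example is only compactness, and your argument never uses compactness before invoking Butler. Similarly, your auxiliary claim that the shape operator $S$ is Levi--Civita parallel on each horosphere because it ``commutes with the stable holonomy'' is unsubstantiated: $S$ is an extrinsic object, and no such commutation is established by the hypothesis $\Pi=P$.

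The paper's route to Theorem \ref{eigenvalues} is quite different and is where compactness enters essentially. The key consequence of $\Pi=P$ is not that $\Pi$ is an isometry but that the Levi--Civita parallel transport on horospheres is \emph{invariant under the geodesic flow} (property (3) of Definition \ref{def-stable-holonomy2}). Using this commutation, for any periodic orbit with return map $T=e^{lA}$ one computes (Lemma \ref{coincidence-integer}) that the pulled-back horospherical metric satisfies $\varphi_{lk}^*h_{lk}=\langle e^{lkA}\cdot,e^{lkA}\cdot\rangle$ for all integers $k$, and hence $\tilde M$ is bi-Lipschitz to the Heintze group $(G_A,g_A)$ (Theorem \ref{quasi-isom}). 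Since $M$ is closed, $G_A$ is then quasi-isometric to the finitely generated group $\pi_1(M)$, and Xie's theorem (Theorem \ref{G_A-hyperbolic}) forces the real Jordan form of $A$ to be scalar, i.e.\ the eigenvalues of $T$ have equal modulus. Only then is Butler's theorem applied. The passage through Heintze groups and Xie's quasi-isometric rigidity is the missing idea in your outline.
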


As mentioned before, the restriction on the sectional curvature ensures the existence of the stable holonomy. 
For Theorem \ref{main-thm} to hold, it is indeed sufficient to make the assumption for a {\it single} horosphere in $\tilde M$ since in Proposition \ref{one-all}  we show that it implies that {\it all} horospheres satisfy it. Since it is known that pinched curvature implies pinching of Lyapunov exponents we could hope, as suggested by the anonymous referee, that a pinching of Lyapunov exponents might be sufficient. This point is left for further study.

\smallskip

In the case that $\dim M = 2$, Theorem \ref{main-thm}, may still be true. However, 
our proof in the case $\dim M \geq 3$ does not apply since it relies on Theorems \ref{butler} and \ref{G_A-hyperbolic} which both require our assumption on the dimension, see more details below. 

\smallskip
Essential to the proof of our main theorem is the following deep characterization of closed, real hyperbolic manifolds stated by Butler \cite{B}. This result is related to the way the geometry of horospheres evolves under the action of the geodesic flow. Butler showed, in what might be called now as {\it Lyapunov rigidity},  that the equality of the modulus of the eigenvalues of $d  g _t | E^{ss}(v)$ along {\it every} periodic geodesic has an important geometric consequence. Let us recall his theorem: 
\begin{Thm}[\cite{B}, Theorem 1.1]
\label{butler}
Let $M$ be a closed, negatively curved manifold of dimension $n\geq 3$.  For
 a periodic orbit $g _t(v)$ of the geodesic flow on $T^1 M$ with period $l(v)$, let $\xi _1 (v), \dots, \xi_{n}(v)$ be the complex eigenvalues
of $D g _{l(v)}(v)| E^{ss}(v)$, counted with multiplicities. Assume that
$|\xi _1 (v)|= \dots =|\xi_{n}(v)|$ hold for each periodic orbit $g _t(v)$, then $M$ is homothetic to a compact quotient of the real hyperbolic space. 
\end{Thm}
In this theorem, the assumption on $\dim M \geq 3$ is indeed necessary. Indeed, let us consider a closed surface $M$ with 
a  $1/4$-pinched negative sectional curvature Riemannian metric $g$.
The metric $g$ can be chosen to be, for example, a small perturbation of an hyperbolic metric. In this case, the horospheres in $\tilde M$ endowed with their induced metric are complete Riemannian lines and the assumption on the eigenvalues of $D g _{l(v)}(v)| E^{ss}(v)$
along periodic orbits $g_t (v)$ does not provide any useful information; indeed there is a single eigenvalue and the action of $Dg _t$ on $E^{ss}$ is therefore trivially conformal. 

\medskip

Theorem \ref{main-thm} is a consequence of Theorem \ref{butler}, Proposition~\ref{one-all}, and the following result.
\begin{Thm}\label{eigenvalues} Under the assumptions of Theorem~\ref{main-thm}, let $c_{\tilde v} (t)$ projects to a periodic geodesic $c_v (t)$ of period $l(v)$ in $M$ and let $\xi = c_{\tilde v} (+\infty)$.  
Then, the complex eigenvalues of $Dg _{l(v)}(v)|E^{ss}(v)$ 
satisfy $|\xi _1 (v)|= \dots =|\xi_{n}(v)|$. \end{Thm}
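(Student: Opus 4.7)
First, I would invoke Proposition~\ref{one-all} so that the hypothesis $\Pi^{\xi}_s = P^{\xi}_s$ holds on every horosphere centered at $\xi := c_{\tilde v}(+\infty)$. Two consequences follow. Because $\Pi$ is automatically path-independent along a strong stable leaf, so is $P^{\xi}_s$; hence the induced metric on $H_\xi(s)$ has vanishing sectional curvature, and each horosphere is isometric to Euclidean $\mathbb{R}^n$. Because $\Pi$ is invariant under $d\tilde g_t$, the parallel transport $P^{\xi}_s$ is intertwined by the map $\Phi_t : H_\xi(s) \to H_\xi(s-t)$ obtained by flowing the inward unit normals via $\tilde g_t$ and projecting back to $\tilde M$; in parallel orthonormal frames on the two horospheres the matrix of $d\Phi_t$ is therefore independent of the base point, so $\Phi_t$ is an affine map of flat Euclidean spaces.

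Let $\gamma$ be the deck transformation with $\tilde g_{l(v)}(\tilde v) = d\gamma(\tilde v)$, and set $x_0 := \tilde p(\tilde v)$. Because $\gamma$ is an isometry of $\tilde M$ fixing $\xi$, it restricts to a Riemannian isometry $H_\xi(s) \to H_\xi(s-l(v))$, and since it sends inward normals to inward normals we get the commutation $\gamma \circ \Phi_t = \Phi_t \circ \gamma$. The return map $\sigma := \gamma^{-1} \circ \Phi_{l(v)}$ is then a self-diffeomorphism of $H_\xi(s)$ fixing $x_0$, and since each of its two factors commutes with parallel transport, so does $\sigma$; hence $\sigma$ is affine on the flat horosphere, and in parallel coordinates centered at $x_0$ it is linear: $\sigma(x) = Bx$. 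Under the identification $E^{ss}(\tilde v) \cong T_{x_0}H_\xi(s)$ provided by $d\tilde p$, the matrix $B$ is conjugate to $Dg_{l(v)}(v)|E^{ss}(v)$, so they have the same spectrum.

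Iterating via $\sigma^k = \gamma^{-k}\circ\Phi_{kl(v)}$ and introducing the $\gamma$-equivariant orthonormal frames $e_i^{(k)} := d\gamma^k(e_i)$ at $\gamma^k x_0$, the matrix of $d\gamma^k$ in the frames $(e_i, e_i^{(k)})$ becomes the identity, so $B^k$ coincides with the matrix of $d\Phi_{kl(v)}|_{x_0}$ in orthonormal frames. In particular $\|B^k v\| = \|d\Phi_{kl(v)}|_{x_0}(v)\|$ for every $v$ and every $k$, and the theorem reduces to showing that the stable Jacobi fields along $c_{\tilde v}$ with initial value at $x_0$ all decay at a common exponential rate.

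This last step is the main obstacle. Affineness of $\Phi_t$ forces the Weingarten operator $U$ of the horospheres to be parallel within each horosphere; the Codazzi equation then yields $R^M(X,Y)\dot c = 0$ for all $X, Y \in TH_\xi$, and the first Bianchi identity produces a block decomposition of the ambient curvature tensor along the splitting $T\tilde M = TH_\xi \oplus \mathbb{R}\dot c$. Inserting this structure into the Riccati equation for $U$ along the periodic axis $c_{\tilde v}$, the operators $U(c_{\tilde v}(t))$ and $R_{\dot c}(c_{\tilde v}(t)) := R(\cdot,\dot c)\dot c$ remain simultaneously diagonal in a common parallel frame. The decisive input then comes from the closure of the orbit: the equivariance $\gamma\circ\Phi_t = \Phi_t\circ\gamma$ ties the orthogonal $d\gamma$ to the diagonal $d\Phi_{l(v)}$, and when combined with the strict $1/4$-pinching---which confines the eigenvalues of $U$ to $[1,2)$ and those of $R_{\dot c}$ to $[1,4)$ via the Gauss formula---it forces the diagonal entries of the stable fundamental solution to share a single modulus. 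Consequently $B$ is a positive scalar multiple of an orthogonal matrix, and $|\xi_1(v)| = \cdots = |\xi_n(v)|$.
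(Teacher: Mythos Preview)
Your first three paragraphs are essentially correct and parallel the paper's setup: flatness of horospheres, affineness of the horospherical flow maps, and the identification of $B$ with the linearization $Dg_{l(v)}|E^{ss}(v)$ all appear in the paper (Lemma~\ref{coincidence-integer} is the precise statement that the flow map is linear in parallel coordinates). The reduction to a statement about stable Jacobi fields along the periodic axis is also fine.

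The genuine gap is in your final paragraph. You assert that the equivariance $\gamma\circ\Phi_t = \Phi_t\circ\gamma$, combined with the $1/4$-pinching bounds $U\in[1,2)$ and $R_{\dot c}\in[1,4)$, ``forces the diagonal entries of the stable fundamental solution to share a single modulus.'' This step is not justified, and in fact cannot be carried out by local Riemannian identities alone. The Heintze groups $G_A$ with $A=\mathrm{diag}(a_1,\dots,a_n)$, $1\le a_1<\cdots<a_n<2$, described in Section~\ref{Heintze}, satisfy \emph{all} of the local structure you derive: the horospheres centered at the distinguished boundary point are flat, the flow $\Phi_t$ is linear, $U$ is parallel along horospheres with eigenvalues $a_i\in[1,2)$, the Codazzi and Riccati equations hold with $U$ and $R_{\dot c}$ simultaneously diagonal---yet the stable fundamental solution along the axis has entries $e^{-a_it}$ with distinct moduli. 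What fails for $G_A$ is precisely the existence of a cocompact lattice, but your argument never uses compactness of $M$ beyond the existence of the single closed geodesic, and one closed geodesic does not supply enough constraints to force $a_1=\cdots=a_n$ via the orthogonal/diagonal factorization you invoke.

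The paper closes this gap by a completely different, global mechanism: Theorem~\ref{quasi-isom} shows that $\tilde M$ is bi-Lipschitz to the Heintze group $G_A$ with $e^{l(v)A}$ conjugate to $Dg_{l(v)}|E^{ss}(v)$, and then Xie's quasi-isometric rigidity theorem (Theorem~\ref{G_A-hyperbolic}) is invoked: since $\tilde M$ is quasi-isometric to the finitely generated group $\pi_1(M)$, so is $G_A$, and Xie's theorem forces the real Jordan form of $A$ to be a scalar multiple of the identity. This large-scale input is the decisive ingredient you are missing; without it (or Heintze's no-lattice theorem, or something of comparable strength) the conclusion does not follow.
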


Let us now briefly describe the proof of Theorem \ref{eigenvalues}.  First note that the closeness of the manifold of $M$ is a necessary assumption as one can verify on the examples given by the {\em Heintze groups}.
Recall that a Heintze group is a solvable group  $G_A := \mathbb R \ltimes_A \mathbb R^n$, where $A$ is an $n\times n$ real matrix and $\mathbb R$ acts on
$\mathbb R ^n$ by $x\rightarrow e^{tA} x$. In the case that the real parts of the eigenvalues of $A$ have the same sign, Heintze \cite{He1} showed the existence of left invariant metrics on $G_A$ with negative sectional curvature. In this case, horospheres 
centered at a particular point on $\partial G_A$ and endowed with the induced metric are flat (see section~\ref{se:horo} and in particular (\ref{modele})). Notice that for a Heintze group, the existence of one 'flat' horosphere does not imply that all horospheres are flat. Indeed, crucial in the proof of Proposition \ref{one-all} is the fact that the metric under consideration comes from a closed Riemannian manifold while a Heintze group does not have any cocompact quotient unless it is the hyperbolic space. If $A$ is a multiple of the identity matrix, $G_A$ is then homothetic to the real hyperbolic space; furthermore, it was proved by Heintze in \cite{He} that the Heintze groups $G_A$ have no cocompact lattice unless they are homothetic to the hyperbolic space.
Moreover, X. Xie obtained a necessary condition for $G_A$ to be quasi-isometric to a finitely-generated group. His result is  also essential for the proof of our main Theorem. 
Before stating it, recall that, given an $n\times n$-matrix $A$, the 'real part Jordan form' of $A$ is obtained from the Jordan form of $A$ by replacing each diagonal entry with its real part and reordering to make it canonical.

\smallskip

\begin{Thm}[\cite{Xie1}, Corollary 1.6]
\label{G_A-hyperbolic}
Let $A$ be an $n\times n$ real matrix whose eigenvalues all have positive real parts. If $G_A$ is quasi-isometric to a finitely generated group, then the real part Jordan form of $A$ is a multiple of the identity matrix.
\end{Thm}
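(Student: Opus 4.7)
The plan is to use the hypothesis, extended by Proposition \ref{one-all} to every horosphere of $\tilde M$, to manufacture a Heintze-group structure on $\tilde M$, and then to apply Xie's theorem (Theorem \ref{G_A-hyperbolic}) to force the linearization of the flow along horospheres to be a scalar multiple of the identity. Equality of eigenvalue moduli for the Anosov return map will then follow at once.

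First I would show that each horosphere $H_\xi(s)$ is flat for its induced metric. The stable holonomy satisfies the cocycle identity $\Pi^\xi_s(y,z)\circ\Pi^\xi_s(x,y)=\Pi^\xi_s(x,z)$, so its composition around any piecewise-geodesic loop telescopes to the identity; by hypothesis each factor equals the Levi-Civita parallel transport along the corresponding segment, so the Riemannian holonomy of $H_\xi(s)$ around every piecewise-geodesic loop is trivial. Since $H_\xi(s)$ is simply connected, the induced metric must be flat, and $H_\xi(s)$ is isometric to Euclidean $\mathbb{R}^n$. Next, the flow-invariance of $\Pi$, together with $\Pi=P$ on source and target horospheres, says precisely that $D\tilde g_t$ intertwines the Levi-Civita parallel transports on $H_\xi(s)$ and $H_\xi(s-t)$. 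Hence $\tilde g_t\colon H_\xi(s)\to H_\xi(s-t)$ is an affine diffeomorphism between Euclidean spaces. Fixing a base point and its image under the flow and reading $\tilde g_t$ in the corresponding Euclidean coordinates, $\tilde g_t$ becomes a linear isomorphism $L_t$ of $\mathbb{R}^n$; the one-parameter-group property $L_{t+s}=L_t L_s$ yields $L_t=e^{-tA}$ for a single matrix $A$.

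Parametrizing $\tilde M\cong\mathbb{R}\times\mathbb{R}^n$ by pushing the base horosphere forward along the flow, the Riemannian metric on $\tilde M$ takes the form $dt^2+\langle e^{-tA}\cdot,e^{-tA}\cdot\rangle_{\mathrm{Eucl}}$, which is the left-invariant metric on the Heintze group $G_A=\mathbb{R}\ltimes_A\mathbb{R}^n$. Hence $\tilde M$ is isometric, in particular quasi-isometric, to $G_A$. Since $\pi_1(M)$ acts cocompactly on $\tilde M$, the finitely generated group $\pi_1(M)$ is quasi-isometric to $G_A$, and Theorem \ref{G_A-hyperbolic} forces the real Jordan form of $A$ to be $\lambda I$ for some $\lambda>0$. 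Thus the flow acts on each horosphere by the pure homothety $L_t=e^{-t\lambda}I$.

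To finish, let $\gamma\in\pi_1(M)$ be the deck transformation along $c_{\tilde v}$ of translation length $l(v)$; then $\gamma$ fixes $\xi=c_{\tilde v}(+\infty)$ and carries $H_\xi(s)$ onto $H_\xi(s-l(v))$. Identifying $E^{ss}(v)$ with $T_{\tilde x}H_\xi(s)$ via $d\tilde p$ and projecting to $T^1M$, the Anosov return map becomes
\[
Dg_{l(v)}(v)\big|_{E^{ss}(v)}=d\gamma^{-1}\circ L_{l(v)}=e^{-l(v)\lambda}\,d\gamma^{-1}.
\]
Because $\gamma$ is a Riemannian isometry between two flat horospheres, $d\gamma^{-1}$ is orthogonal, so all of its complex eigenvalues have modulus $1$; consequently every eigenvalue of $Dg_{l(v)}(v)|E^{ss}(v)$ has the common modulus $e^{-l(v)\lambda}$, as required. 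I expect the main difficulty to be the Heintze identification: one must verify that the Euclidean structures on the different horospheres glue consistently into the single left-invariant metric on $\mathbb{R}\ltimes_A\mathbb{R}^n$, rather than producing an inhomogeneous warped product for which Xie's theorem is unavailable. This amounts to tracking carefully how the affine flow maps of the second step conjugate the Euclidean structures on distinct horospheres, and using the isometric $\pi_1(M)$-action to enforce homogeneity in the horospherical directions.
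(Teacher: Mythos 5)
Your proposal does not address the statement you were asked to prove. Theorem \ref{G_A-hyperbolic} is a statement purely about the large-scale geometry of the solvable Lie group $G_A=\mathbb{R}\ltimes_A\mathbb{R}^n$: \emph{if} $G_A$ is quasi-isometric to some finitely generated group, \emph{then} the real Jordan form of $A$ is a multiple of the identity. There is no closed manifold $M$, no horosphere, and no stable holonomy in its hypotheses. What you have written is instead a proof sketch of the paper's Theorem \ref{eigenvalues} (equality of the moduli of the eigenvalues of $Dg_{l(v)}(v)|E^{ss}(v)$ under the Main Theorem's hypotheses), and in the middle of that sketch you explicitly \emph{invoke} Theorem \ref{G_A-hyperbolic} to conclude that the Jordan form of $A$ is $\lambda I$. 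So the argument is circular with respect to the assigned statement: the one step of your proposal that would establish the claimed conclusion is precisely the claim itself, taken as a black box. In the paper this result is imported from Xie (\cite{Xie1}, Corollary 1.6) and not reproved; a genuine blind proof would have to engage with the quasi-isometric rigidity of negatively curved groups $\mathbb{R}^n\rtimes\mathbb{R}$ — e.g.\ the analysis of quasi-isometries via boundary maps and the parabolic visual (quasi-)metrics on $\partial G_A\setminus\{\xi\}$, showing that a cocompact quasi-action forces the exponents of $A$ to coincide — none of which appears in your write-up.

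As a secondary remark, even read as a proof of Theorem \ref{eigenvalues} your sketch overstates what the hypotheses give. You claim the flow maps $H_\xi(s)\to H_\xi(s+t)$ are affine for all $t$ and that $\tilde M$ is \emph{isometric} to $(G_A,g_A)$. The paper only establishes equality of the two metrics at the discrete times $t=lk$ (using the deck transformation $\gamma$ with $D\psi(x_0)=e^{lA}$) and a uniform bi-Lipschitz bound for intermediate $t$, which yields a quasi-isometry — and that is all Xie's theorem requires. Your stronger claim of a one-parameter linear group $L_t=e^{-tA}$ for all real $t$ does not follow from flow-invariance of the parallel transport alone, since that invariance controls the linearization only up to the choice of base points, and the identification of $D\psi(x_0)$ with a matrix exponential already requires passing to $T^2$ (cf.\ \cite{cul}). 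These issues would need repair if your text were meant as a proof of Theorem \ref{eigenvalues}; as a proof of Theorem \ref{G_A-hyperbolic} it is simply not an argument.
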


\smallskip

The main idea of the proof of Theorem~\ref{eigenvalues} is therefore to show that for each 
periodic orbit $g_t (v)$ of the geodesic flow of $T^1 M$ of period $l(v)$,  $\tilde M$ is quasi-isometric to a Heintze group $G_A$, where $A$ is a matrix whose eigenvalues {\it all} have positive real parts and 
such that $e^{l(v)A}$ is conjugate to $Dg _{l(v)}(v)|E^{ss}(v)$. By assumption, $M$ is a closed manifold endowed with a negatively curved metric. It is well known that  $\tilde M$ is quasi-isometric to the fundamental group of $M$ which is, in particular,  finitely-generated. Hence, $G_A$ turns out to be quasi-isometric to a finitely-generated group. It now follows from the above mentioned theorem of Xie that the real part of the eigenvalues of $A$ coincide and therefore, the eigenvalues of $Dg _{l(v)}(v)|E^{ss}(v)$ have the same modulus.

\smallskip

Therefore, we are left with  proving  that $\tilde M$ is quasi-isometric to a Heintze group $G_A$. This is done as follows.
 Let us fix a geodesic in $\tilde M$ with an endpoint  $\xi\in\partial\tilde M$. 
The set of stable horospheres $H_\xi (t)$ centered at $\xi$ and the set of geodesics asymptotic to $\xi$ define two orthogonal foliations 
of $\tilde M$. These foliations determine horospherical coordinates $\mathbb R \times H_\xi (0) = \mathbb R \times \mathbb R ^n$ on $\tilde M$. In these coordinates, 
the metric of $\tilde M$ decomposes at every point $(t,x) \in \mathbb R \times \mathbb R^n$ as an orthogonal sum 
\begin{equation}\label{def-tildeg}
\tilde g = dt ^2 + h_t ,
\end{equation}
where $dt ^2$ is the standard metric on $\mathbb R$ and $h_t$ is a one parameter family of flat metrics on $H_\xi (0) = \mathbb R^n$. 
On the other hand, a Heintze group $G_A$ is, by definition, also diffeomorphic to $\mathbb R \times \mathbb R ^n$ with a metric, written similarly  at every point $(t,x) \in \mathbb R \times \mathbb R^n$, as the orthogonal sum

\begin{equation}\label{def-g_A}
g_A := dt ^2 + < e^{tA} \cdot, e^{tA} \cdot >,
\end{equation}
where $< e^{tA} \cdot, e^{tA} \cdot >$ is a one parameter family of flat metrics on 
$\mathbb R ^n$, with $<\cdot, \cdot >$  being the standard scalar product on 
$\mathbb R ^n$. It is worth recalling that the family of flat metrics 
$< e^{tA} \cdot, e^{tA} \cdot >$ 
on the $\mathbb R ^n$ factor
have the same Levi-Civita connection.  This implies that the geodesic flow $(s,y) \rightarrow (s+t,y)$ acting on 
$G_A \approx \mathbb R \times \mathbb R ^n$ commutes with the parallel transport along the horospheres $\{s\} \times \mathbb R ^n$.

Turning back to $\tilde M \approx \mathbb R \times \mathbb R ^n$ with its horospherical coordinates associated to $\xi = c_{\tilde v} (+\infty)$,  
where $c_{\tilde v} $ projects to a closed geodesic $c_v$ of period $l(v)$ in $M$.
We will prove that $\tilde M$ is quasi-isometric to $G_A$, for $A$ defined by
\begin{equation}\label{A}
e^{l(v)A} = D\tilde p \, \circ \left( D(\gamma \circ \tilde g _{l(v)}(\tilde v)|E^{ss}(\tilde v) \right) \circ D \tilde p ^{-1},
\end{equation}
and where $\gamma$ is the element of the fundamental group of $M$ such that $D \gamma (\tilde g _{l(\tilde v)} (\tilde v)) = \tilde v$, by proving that $h_{l(v)k} = < e^{k A} \cdot, e^{k A} \cdot >$, for all positive integer $k$.

\smallskip

The proof of this equality reduces to 
a consequence of our assumptions that
the parallel transport along the horospheres commutes with
the flow $(s,y) \rightarrow (s+t,y)$ acting on $\tilde M \approx \mathbb R \times \mathbb R ^n$. Indeed, it follows form this commutation that  
the computation of $h_{l(v)k}(l(v)k,y) (X,X)$ for any tangent vector $X$ to $\mathbb R^n$ at the point $(l(v)k,y)$ does not depend on the point $y\in \mathbb R^n$.
Thus, it will be computed at the point $(l(v)k,y_0)$, where $y_0$ is chosen so that $(0,y_0)$ are the coordinates of the point 
$x_0 \in \tilde M$ lying on the intersection of the geodesic $c_{\tilde v}$ with the horosphere $H_\xi (0) = \mathbb R^n$; the relation $h_{l(v)k}(l(v)k,y_0) (X,X) = < e^{k A} X, e^{k A} X >$
is then easily derived from the fact that the flow $(s,y) \rightarrow (s+t,y)$ is the projection by $\tilde p$ on $\tilde M$ of the geodesic flow.

\smallskip

Let us conclude  this quick description by briefly describing how the commutation of the parallel transport along the horospheres with the geodesic flow is derived.
To this end, we adapt the construction in Avila-Santamaria-Viana \cite{ASV} and Kalinin-Sadovskaya \cite{KS}, which amounts to using the geodesic flow to construct a transportation along horospheres, which is called the stable holonomy. By construction, it is invariant by the geodesic flow. It turns out that in order to make this construction work, we need the strict $1/4$-pinching curvature assumption or the relatively $1/2$-pinched sectional curvature, which in turn corresponds to the notion of a {\sl bunched} dynamical system appearing in \cite{ASV, KS}. 

\smallskip

The organization of the paper is as follows. In Section \ref{horospheres}, we show that the assumption of the main theorem on one horosphere implies that it is satisfied on all of them using the properties of the stable foliation of $T^1M$ and the density of each leaf. We also describe the geometry of the Heintze groups in the same section. In Section \ref{connection}, we describe the construction of our version of the {\sl stable holonomy}, adapted from  Avila-Santamaria-Viana \cite{ASV} and Kalinin-Sadovskaya \cite{KS}. In this Section we also prove that this new transportation, if coincides with the parallel transport for  the induced metric on {\it one} horosphere, then it is also the case for {\it all} horospheres.
Finally, in Section \ref{quasi}, this new tool allows us to prove that $\tilde M$ is quasi-isometric to the hyperbolic space, and that  
the derivative of the flow on the stable manifolds has complex eigenvalues which all have the same modulus. This  concludes the proof of  Theorem \ref{eigenvalues}, and therefore of Theorem \ref{main-thm}. 
In the Appendix, we show that 
the strong $1/4$-pinching assumption (\ref{relativepinching}) implies the bunching of the stable cocycle of the geodesic flow defined in \cite{KS}. We will also prove that the stable holonomy defined is this work
on horospheres is actually conjugate to the stable holonomy defined on the strong stable leaves of the geodesic flow.

\smallskip

\ack\  The research in this paper greatly benefited from visits of the authors at: Institut Fourier, IHP, Institut de Math\'ematiques de Jussieu-Paris Rive Gauche, Princeton University and the University of Georgia. The authors express their gratitude for their hospitality. We also thank A. Sambarino for having pointed out to us the Theorem IV of \cite{Shub} and F. Ledrappier, and X. Xie for helpful exchanges.  G\'erard Besson was supported by ERC Avanced Grant 320939, Geometry and Topology of Open Manifolds (GETOM). Sa'ar Hersonsky was partially supported by a grant from the Simons Foundation (\#319163 to Sa'ar Hersonsky). The authors are grateful to the referee for several very valuable comments; in particular, for suggesting that a relative pinching assumption on the sectional curvature may imply 
the existence of the stable holonomy, leading to an improvement of our main theorem.

\bigskip 
\bigskip

\section{Geometry of Horospheres and the Heintze groups}\label{horospheres}
\label{se:horo}
In this section, we first prove Proposition~\ref{one-all} below which, among others, proves several  continuity properties of horospheres and asserts that if one of them is flat then {\it all} horospheres are flat. We then describe the main family of examples showing that the closeness assumption in Theorem \ref{main-thm} is  necessary. These examples, consisting of simply connected Lie groups endowed with negatively curved left invariant metrics, (see \cite{He1}, Theorem 3), are due to E. Heintze and are called  ``Heintze groups". At the end of this section we provide a proof of the fact that for every $\xi \in \partial \tilde M$, the Busemann function $B(\cdot, \xi)$ is smooth.

\subsection{Geometry of horospheres}
\label{sub:flathorosphere}
Let us start by recalling a few facts about the dynamical approach describing horospheres. We first note that the strong stable and unstable distributions $\tilde E^{ss}, \tilde E^{su}$ and their associated foliations  
$\tilde W^{ss}$ and $\tilde W^{su}$ are invariant under the action of the 
fundamental group of $M$, hence they all project onto their 
natural counterparts denoted by $E^{ss}, E^{su}, W^{ss}$ and $W^{su}$ in 
$TT^1 M$ and  $T^1 M$, respectively.  An important 
consequence of the closeness of $M$ is that each leaf of the strong 
stable or unstable foliations $W^{ss}$ and $W^{su}$ is dense in $T^1 M$ (see 
\cite{An}, Theorem 15). An
application of the dynamical interpretation is described in the 
proposition below and will be important in 
the sequel. 
Given a unit tangent vector $\tilde v \in T_{z}^1 \tilde M$, we will denote by
$H_{\tilde v}$ the horosphere centered at the point $c_{\tilde v}(+ \infty) \in 
\partial \tilde{M}$ and passing through the base point $z$ of $\tilde v$. 
Observe that $H_{\tilde v} = H_\xi (s)$ where $\xi = c_{\tilde v}(+ \infty)$ and $s= B_\xi(z)$. This notation will make easier the formulation of the next Proposition.
If  $x,y \in H_{\tilde v}$ are two points such that their exists a unique geodesic 
of $H_{\tilde v}$ joining $x$ and $y$, we write $P_{H_{\tilde v}} (x,y) : T_x 
H_{\tilde v} \to T_y H_{\tilde v}$ the parallel transport along the geodesic 
path between $x$ and $y$. We will denote by $d_{H_{\tilde v}}$ the distance 
on $H_{\tilde v}$. Recall that the parallel transport is measured with respect to the induced 
Riemannian metric on $H_{\tilde v}$.

\begin{Pro}\label{one-all}
Let $M$ be a closed $(n+1)$-dimensional Riemannian manifold with negative 
sectional curvature, then the following hold.
  \begin{enumerate}
    \item Let $(\tilde {v}_k )_k$ be a sequence in $T^1 \tilde M$ such that $\lim _k \tilde {v}_k =\tilde v$. Then, 
    $H_{\tilde v_{k}}$ $C^{\infty}$-converge to $H_{\tilde v}$ on compact subsets. 
    \item It is equivalent that one or every horosphere in $\tilde M$ is flat. 
    \item There exists a positive constant $\rho >0$ such that the injectivity radius of each
    horosphere is bounded below by $\rho$.
    \item Let $(\tilde v_k )_k \in T^1_{x_k} \tilde M$ such that $\lim_k \tilde v_k =\tilde v \in T^1_{x} \tilde M$ (notice that $\lim _k x_k =x$). Let 
    $X_k \in T _{x_k} H_{\tilde v_k}$ and $y_k \in H_{\tilde v_k}$ such that $\lim _k y_k =y \in H_{\tilde v}$,
    $\lim _k X_k = X \in T_x H_{\tilde v}$ and, if $d_{ H_{\tilde v}} (x, y) < \rho$ then, 
    $\lim _{k} P_{H_{\tilde v_k}} (x_k ,y_k) (X_k) = P_{H_{\tilde v}} (x,y) (X)$.
  \end{enumerate}
\end{Pro}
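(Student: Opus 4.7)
My plan is to prove the four assertions in turn, with parts (2)--(4) all deriving from part (1) via standard density and compactness arguments. For part (1) I will characterize horospheres as level sets of Busemann functions. Fixing a basepoint $x_0 \in \tilde M$, set $\beta(\xi, x) = B_\xi(x) - B_\xi(x_0)$; each $\beta(\xi, \cdot)$ is $C^\infty$ with unit gradient tangent to the geodesic rays ending at $\xi$, and its Hessian satisfies a Riccati equation along these rays whose coefficients are controlled by the uniformly bounded sectional curvatures of $M$. Combined with the continuous dependence of the stable geodesic ray field on $\xi$, this upgrades pointwise continuity of $\beta$ in $\xi$ into $C^\infty$-convergence $\beta(\xi_k, \cdot) \to \beta(\xi, \cdot)$ on every compact subset of $\tilde M$ when $\tilde v_k \to \tilde v$ (so that $\xi_k = c_{\tilde v_k}(+\infty) \to \xi$). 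Since $\nabla_x \beta$ is a nowhere vanishing unit vector field, the implicit function theorem then converts this into $C^\infty$-convergence of the level sets, i.e. $H_{\tilde v_k} \to H_{\tilde v}$, on compact subsets of $\tilde M$.

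For part (2), flatness of $H_{\tilde v}$ depends only on the projected class $v = p(\tilde v) \in T^1 M$, since distinct lifts of $v$ produce $\pi_1(M)$-isometric horospheres; moreover, by definition two vectors in the same leaf of $\tilde W^{ss}$ determine the same horosphere in $\tilde M$. Hence the set $F = \{v \in T^1 M : H_{\tilde v}\text{ is flat}\}$ is well-defined and saturated by $W^{ss}$. If $F$ is nonempty it contains a full leaf, which is dense in $T^1 M$ by \cite[Theorem 15]{An}. By part (1), the sectional curvature of $H_{\tilde v}$ at $\tilde p(\tilde v)$ depends continuously on $\tilde v$, so $F$ is closed; a dense closed subset of $T^1 M$ is all of $T^1 M$, and every horosphere is flat.

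For part (3), set $r(\tilde v) = \operatorname{inj}(H_{\tilde v}, \tilde p(\tilde v))$. By $\pi_1(M)$-equivariance $r$ descends to a function $\bar r$ on $T^1 M$; it is strictly positive because each horosphere is a complete smooth Riemannian manifold, and continuous by part (1) together with the continuous dependence of the injectivity radius on a $C^\infty$-varying Riemannian metric. Compactness of $T^1 M$ gives a uniform lower bound $\bar r \ge \rho > 0$. Part (4) is then a routine continuity statement: the parallel transport $P_{H_{\tilde v_k}}(x_k, y_k)(X_k)$ solves a linear ODE along the minimising geodesic from $x_k$ to $y_k$ in $H_{\tilde v_k}$, its coefficients (the Christoffel symbols of the induced metric) converge in $C^\infty$ by part (1), and the lower bound $\rho$ guarantees that the minimising geodesics are uniquely defined and vary continuously in $k$ whenever $d_{H_{\tilde v}}(x, y) < \rho$; the standard theorem on continuity of ODE solutions with respect to coefficients and initial data then yields the claim.

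The principal obstacle is concentrated in part (1): obtaining uniform-in-$\xi$ control on all derivatives of the Busemann function on compact sets. While $C^\infty$-smoothness of $B_\xi(\cdot)$ for fixed $\xi$ is classical (and is recalled at the end of this section of the paper), transferring $C^0$-convergence $\xi_k \to \xi$ into $C^\infty$-convergence of the horospheres themselves requires careful use of the Riccati equation for $\operatorname{Hess}(B_\xi)$ together with the continuous dependence of the stable geodesic ray field on $\xi$. Once this is in place, parts (2)--(4) follow cleanly from density of strong stable leaves, compactness of $T^1 M$, and continuity of linear ODE solutions, respectively.
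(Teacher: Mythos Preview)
Your treatment of parts (2)--(4) is essentially the same as the paper's: density of strong stable leaves combined with the continuity from part (1), compactness of $T^1M$, and continuous dependence of ODE solutions on their coefficients. The paper phrases part (3) as a contradiction argument rather than a direct minimisation, but the content is identical.

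For part (1) you take a genuinely different route. The paper treats horospheres as projections of strong stable leaves and invokes the $C^r$ stable manifold theorem for the time-$t_0$ map of the geodesic flow (Theorem~IV.1 in \cite{Shub}), which gives directly a continuous map $\Theta\colon V \to \mathcal{E}^r(D^n, T^1M)$ parametrising the plaques of $W^{ss}$. This packages all of the hard analysis into a single citation. Your approach instead works with the Busemann functions themselves and proposes to bootstrap from $C^0$ (or $C^1$) convergence in $\xi$ to $C^\infty$ convergence via the Riccati equation for $\operatorname{Hess} B_\xi$. This is a legitimate alternative---it is essentially how one proves the $C^2$ regularity of Busemann functions in \cite{Im-Hof}---but the step from $C^2$ to $C^\infty$ is less routine than your outline suggests. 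The Riccati equation is an ODE along each ray asymptotic to $\xi$, and controlling \emph{transverse} derivatives of $U = \operatorname{Hess} B_\xi$ requires differentiating that equation in directions along the horosphere, producing a hierarchy of linear equations whose coefficients involve covariant derivatives of the curvature tensor and lower-order derivatives of $U$. One must then argue, inductively in the order of differentiation, that the unique bounded solution at infinity depends continuously on $\xi$. This can be done, but it amounts to reproving, by hand, the $C^r$ portion of the invariant manifold theorem in this specific setting. The paper's approach buys brevity by outsourcing exactly this induction to \cite{Shub}; your approach would be more self-contained but would need the inductive step spelled out to be complete.

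One minor point on part (2): your closedness argument for $F$ appeals to continuity of the sectional curvature of $H_{\tilde v}$ \emph{at the basepoint} $\tilde p(\tilde v)$, which literally only shows that the set $\{v:\text{curvature of }H_{\tilde v}\text{ vanishes at }\tilde p(\tilde v)\}$ is closed. Since that larger set is also $W^{ss}$-saturated (every point of a horosphere is a basepoint for some vector in the same leaf), your argument still goes through, but it would be cleaner to say directly that $C^\infty$ convergence on compacta forces the limit horosphere to be flat everywhere.
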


\begin{proof}
Let us prove the first part of the Proposition. 
Suppose that the sequence
	$(\tilde 
v_k )_k$ is converging to $\tilde v$ in $T^1 \tilde M$.
The set of unit vectors $\tilde w$ normal to $H_{\tilde v}$ such that $[c_{\tilde w}] =[c_{\tilde v}] \in \partial \tilde M$ is the strong stable leaf $\tilde W^{ss}(\tilde v)$.
Recall that the projection 
$\tilde p : T^1 \tilde M \rightarrow \tilde M$ maps the strong stable leaf 
$\tilde W^{ss}(\tilde v)$ diffeomorphically  onto 
$H_{\tilde v} = \tilde p (\tilde W^{ss}(\tilde v))$.
Similarly, for each $k$ the horosphere $H_{\tilde {v}_k}$ is 
the projection of a strong stable leaf $\tilde W^{ss}(\tilde v_k) $,  
$H_{\tilde v _k} = \tilde p (\tilde W^{ss}(\tilde v_k))$.  Let $v_k$ and $v$ denote the 
projection under  $d\pi :  T^1\tilde M\to T^1 M$ of $\tilde v_k$ and 
$\tilde v$, where $\pi : \tilde M \to M$ is the projection. 
Let us consider a chart $U \subset T^1 M$ of the strong stable foliation $W^{ss}$ containing $v$ and let
$Q = U \cap W^{ss} (v)$ be the plaque of the foliation $W^{ss}$ through $v$. Since $U$ is a chart of the foliation $W^{ss}$,
for $k$ large enough, $U\cap W^{ss} (v_k) \neq \emptyset $ and 
the plaques $Q_k := U\cap W^{ss} (v_k)$ Hausdorff converge to $Q$.
Consequently, for the lift $\tilde Q \subset T^1 \tilde M$ of $Q$ containing $\tilde v$, the set $\tilde p (\tilde Q) \subset H_{\tilde v}$ 
is the Hausdorff limit
of the sequence of sets $\tilde p (\tilde Q_k) \subset H_{\tilde v_k}$ where $\tilde Q_k$ are lifts of $Q_k$ containing $\tilde v _k$. We will show that for all $r\geq 0$, $\tilde p (\tilde Q)$ is the limit in the $C^r$-topology, $r\geq 0$, of  
$\tilde p (\tilde Q_k)$, which will conclude the first 
part of the Proposition.

Let us choose a chart $U$ small enough so that $\tilde Q _k$ and $\tilde Q$ project diffeomorphically onto $Q_k$ and $Q$. Similarly, we can assume that
the projection $p : T^1 M \rightarrow M$ also maps diffeomorphically $Q_k$ and $Q$ into $M$. Finally, if $U$ is small enough, we have that $p(Q_k)$ and $p(Q)$ are isometrically covered by $\tilde p (\tilde Q _k)$ and $\tilde p (\tilde Q)$, respectively. We can therefore work equivalently with $p(Q_k)$ and $p(Q)$ instead of $\tilde p (\tilde Q _k)$ and $\tilde p (\tilde Q)$.
Note that for any $t_0 >0$, the strong stable foliation $W^{ss}$ of the geodesic flow $g_t$
coincide with the strong stable foliation of the diffeomorphism $g_{t_0}$, which we will 
denote by $f$. The time $t_0$ which will be fixed later on.

\smallskip

We will now apply Theorem IV.1, appendix IV, page 79, in \cite{Shub} to the diffeomorphism 
$f $ of $T^1 M$, the decomposition of $TT^1 M = E_1 \oplus E_2$ with $E_1 := \mathbb R X \oplus E^{su}$ and $E_2 := E^{ss}$. Moreover, since the geodesic flow on $T^1M$ is an Anosov flow, we can choose $t_0$ so that the following hold: 
\begin{equation}\label{H1}
\| Df(v) \| \leq \lambda \|v \|
\end{equation}
for every $v\in E_2 \backslash \{0 \}$
and 
\begin{equation}\label{H2}
\| Df(v) \| \geq \mu \|v \|
\end{equation}
for every $v\in E_1 \backslash \{0 \}$,
with the parameters $\mu =1$ and $\lambda = e^{-1}$. Notice that in (\ref{H1}) and (\ref{H2}), the norm is the Riemannian metric on $T^1 M$. The theorem mentioned above,  can now be applied while asserting  that the set of plaques $Q$ of the leaves of the strong stable foliation $W^{ss}$ of $f$, is locally a continuous family of $C^r$-embeddings into $T^1 M$, for any $r\geq 0$, of 
the unit disk $D^n$ in $\mathbb R^n$. More precisely, for $\varepsilon >0$, let us define
\begin{equation}
W^{ss}_\epsilon(v)=\Big \{ u \in T^1 M |\  d(f^n(v), f^n(u))\leq \epsilon ,\; \forall n\geq 0\,,\;\mathrm{and}\;\, d(f^n(v), f^n(u))\underset{n\to +\infty}{\longrightarrow 0}\Big\}.
\end{equation}

Let ${\mathcal E}^r(D^n, T^1M)$ denote the space of $C^r$ embeddings of $D^n$ into $T^1M$, endowed with the $C^r$ topology, where
$D^n$ is the unit disk in $\textbf{R}^n$.
Since $f$ is $C^r$, for any $r\geq 0$ the assertions of the theorem are that for every $v\in T^1 M$ we can choose a neighborhood $V$ of $v$ such that there exists a continuous map
\begin{equation}\label{theta}
\Theta : V\to {\mathcal E}^r(D^n, T^1M)\,,
\end{equation}
such that $\Theta (w) (0)=w$ and $\Theta (w)(D^n)=W^{ss}_\epsilon(w)$, for all $w\in V$. We deduce that the sequence of maps $\Theta (v_k) : D^n \to W^{ss}_\epsilon(v_k)$ converges to the map 
$\Theta (v) : D^n \to W^{ss}_\epsilon(v)$. 
We may also choose $V \subset U$ and $\epsilon >0$ small enough 
so that $p$ maps $W^{ss}_\epsilon(v_k)$ diffeomorphically into $Q_k$ for $k$ large enough and similarly,
$p$ maps  $W^{ss}_\epsilon(v)$ diffeomorphically into $Q$. 
We may also assume that $Q _k$ and $Q$ lift diffeomorphically to 
$\tilde Q _k \subset T^1 \tilde M$ and $\tilde Q \subset T^1 \tilde M$. 
We then deduce that the sequence of diffeomorphism
\begin{equation}\label{diffeo-tilde-horo}
 \alpha _k := \, \pi ^{-1} \circ p \circ \Theta (v_k) : D^n \to \tilde p (\tilde Q_k )
\end{equation}
converges to the diffeomorphism
\begin{equation}\label{diffeo-tilde-horo-lim}
 \alpha :=  \, \pi ^{-1} \circ p\circ \Theta (v) : D^n \to \tilde p(\tilde Q).
\end{equation}
which proves the first part of the Proposition.
\begin{Rem}\label{uniform-convergence}
Notice that in the above convergence, $\tilde p (\tilde Q_k ) \subset H_{\xi_{\tilde v_k}} $ 
and $\tilde p(\tilde Q) \subset H_{\xi_{\tilde v}}$ contains balls of radius $\epsilon ' := \epsilon ' (\epsilon) >0$
centered at $\tilde p( \tilde v_k )$ and $\tilde p( \tilde v )$ respectively. The above convergence therefore 
holds on open sets of uniform size.
\end{Rem}

We now prove the second part of the Proposition. Let us assume that $H_{\tilde v}$ is flat for the induced metric and consider $H_{\tilde w}$. 
Since $M$ is a closed manifold, each leaf of the strong stable foliation $W^{ss}$, in particular $W^{ss}(v)$, is dense in $T^1M$ (see \cite[Theorem 15]{An}). 
Therefore, each plaque $Q$ of $W^{ss}(w)$ contained in a chart $U \subset T^1 M$
of the foliation is the Hausdorff limit of a sequence of plaques $Q_l$ of $W^{ss}(v)$ in the same chart. Consequently, for the lift $\tilde Q \subset T^1 \tilde M$ containing $\tilde w$, the set $\tilde p (\tilde Q) \subset H_{\tilde w}$ 
is the Hausdorff limit 
of a sequence of sets $\tilde p (\tilde Q_l) \subset H_{\tilde v}$ where $\tilde Q_l$ are lifts of $Q_l$.  

Let $\Psi$ be any transversal to $W^{ss}$ passing through $w$ (for example $\Psi$ could be a neighbourhood of $w$ in its weak unstable manifold), and let $v_l$ be the intersection of $\Psi$ with the plaque $Q_l\subset W^{ss}(v)$ which approximate $Q$, that is $v_l\to w$ when $l\to +\infty$. Applying the first part of the proposition, the sequence $H_{\tilde {v}_l}$ locally converges in the $C^r$-topology to $H_{\tilde w}$. To be more precise, the metric 
$$(\pi ^{-1} \circ p\circ \Theta (w))^*(g)$$ is the pulled back to $D^n$ of the metric induced by the metric $g$ of 
$\tilde M$ on 
$\pi ^{-1}(p(\Theta (w)(D^n))) \subset H_{\tilde w}$ and, by the first part of the proposition, we deduce that 
$$(\pi^{-1}\circ p\circ \Theta (w))^*(g) = \lim _{l\to \infty} (\pi ^{-1}\circ p\circ \Theta (v_l))^*(g)$$ in the $C^{r-1}$-topology for every $r$. 
By tensoriality, the curvature of $(p\circ \Theta (w))^*(g)$ is the pulled back of the intrinsic curvature of this projected horosphere (note that the curvature depends only on the differential of $p\circ\Theta$). Since all of these quantities depend continuously on $w$, it follows that  $\tilde p(\tilde Q)$ with the induced metric is flat, just as the $\tilde p(\tilde Q_l)$ are for all $l$. 

This concludes the second part of the Proposition.

The fourth part of the proposition follows along the same lines as above. Let 
$\tilde v_k\in T^1_{x_k} \tilde M$ and $\tilde v \in T^1 _x \tilde M$ as in the statement. As above, we have convergence 
$$(\pi ^{-1} \circ p\circ \Theta (v))^*(g) = \lim _{k\to \infty} (\pi ^{-1} \circ p\circ \Theta (v_k))^*(g)$$ in the $C^{r-1}$-topology for every $r$ and
therefore the Levi-Civita connection of $(\pi ^{-1} \circ p\circ \Theta (v_k))^*(g)$ converges to the Levi-Civita of 
$(\pi ^{-1} \circ p\circ \Theta (v))^*(g)$. 
In particular, for $k$ large enough and $d_{H_{\tilde v_k}}(x_k, y_k) < \rho$ the unique geodesic between $x_k$ and $y_k$
converges to the unique geodesic joining $x$ and $y$ and thus the corresponding parallel transport along these geodesics converges. 
This concludes the proof of the fourth part of the Proposition. 

Let us prove the third part of the Proposition. We argue by contradiction assuming that
there exists a sequence $\tilde v_k \in T^1_{x_k} \tilde M$ such that the injectivity radius $\textrm{inj}_{H_{\tilde v_k}} (x_k)$ of $H_{\tilde v_k}$ at $x_k$ tends
to zero. By compactness of $M$, we may assume, after translation by elements of 
$\pi _1 (M)$, that $\tilde v_k$ converges to $\tilde v\in T^1_x \tilde M$. As above, we have convergence 
of the metrics $(\pi ^{-1} \circ p\circ \Theta (v))^*(g) = \lim _{l\to \infty} (\pi ^{-1}\circ p\circ \Theta (v_k))^*(g)$ in the $C^{r}$-topology for every $r\geq 2$, hence
the injectivity radii $\textrm{inj}_{H_{\tilde v_k}} (x_k)$ of $H_{\tilde v_k}$ at $x_k$ converges to the injectivity radius
 $\textrm{inj}_{H_{\tilde v}} (x)$ of $H_{\tilde v}$ at $x$.
Since $\textrm{inj}_{H_{\tilde v}} (x) >0$, we get a contradiction, which concludes the proof of the third part of the Proposition.

\end{proof}

\subsection{Heintze groups}\label{Heintze}

We now describe a family of  examples illustrating that the compactness of $M$ is a necessary assumption in Theorem \ref{main-thm}.
A Heintze group is a solvable group  $G_A = \mathbb R \ltimes_A \mathbb R^n$ where $A$ is an $n\times n$ matrix whose entries are real numbers. 
Such a group $G_A$ is diffeomorphic to $\mathbb R \times \mathbb R^n$ with a group action given by
$(s,y).(s',y') = (s+s', y+ e^{sA}y')$. In the sequel, we will use the coordinates given by
the diffeomorphism $\psi : \mathbb R \times \mathbb R ^n \rightarrow G_A$ defined by $\psi (s,y) := (s, e^{sA} y)$.
When the real parts of the eigenvalues of $A$ have  the same sign, E. Heintze showed the existence of left invariant metrics on $G_A$ with negative sectional curvature, see \cite{He1}.  
When the matrix $A$ is a multiple of the Identity, $G_A$ endowed with any left invariant metric is homothetic to the hyperbolic space. Furthermore, a Heintze group $G_A$  contains no cocompact lattice unless it is homothetic to the hyperbolic space, \cite{He}.

As an example, consider the $n \times n$ matrix $A$ defined by
\begin{equation}
\label{eq:exHein}
A = 
  \begin{matrix}\begin{pmatrix} a_1 & 0 &\dots &0 \\ 0 & a_2 &\dots &0 \\ \dots &\dots &\dots & \dots \\ 0 & 0 &\dots & a_n
  \end{pmatrix}\\\mbox{}\end{matrix}
\end{equation}
where  $a_1 \leq a_2 \dots \leq a_n <0$.
The left invariant metric $g$ given at $(0,0)$ by the standard
Euclidean scalar product  $dt^2 + |dy_1|^2 + \dots + |dy_n| ^2$ 
is written in the above coordinates $G_A = \mathbb R \times \mathbb R ^n$ as
\begin{equation}\label{modele}
 g=ds^2 + e^{2 a_1 s} |dy_1|^2 + \dots +e^{2 a_n s} |dy_n|^2
\end{equation}
and gives $G_A$ the structure of a Cartan-Hadamard manifold with pinched negative sectional curvature satisfying $-a_n ^2 \leq K \leq -a_1^2$. In the above coordinates and for this metric, for every $y\in \mathbb R ^n$, the curves $t \rightarrow (t,y)$ are geodesics, all being asymptotic 
to a point $\xi \in \partial G_A$ when $t\rightarrow +\infty$. For each $s\in \mathbb R$, the sets 
$\{ (s,y)\, , y\in \mathbb R^n \}$ are
horospheres $H_\xi (s)$ centered at $\xi$. For each $s$, 
the horospheres $H_\xi(s)$ are clearly isometric to the Euclidean space $\mathbb R^n$. However,  $G_A$ is isometric to the real hyperbolic space if and only if $a_1 = a_2 =\dots = a_n$ and it does not admit a compact quotient unless the $a_i$'s coincide, as proved in \cite{He}.
This exemplifies that having a family of Euclidean horospheres $H_\xi (t)$ centered at a given boundary point does not characterize the real hyperbolic space. 

Also note that the flow $\varphi _t$ defined in the above coordinates of $G_A$ by 
$$\varphi _t (s,y) := (s+t,y)$$ permutes the horospheres,
mapping $H_\xi (s)$ on $H_\xi (s+t)$.
Writing $h_s$ the metric induced by $g$ on $H_\xi (s)$ we have
$$
h_s := e^{2 a_1 s} |dy_1|^2 + \dots +e^{2 a_n s} |dy_n|^2
$$
and 
$$
\varphi _t ^{*} \,(h_{s+t}) = e^{2 a_1 (s+t)} |dy_1|^2 + \dots +e^{2 a_n (s+t)} |dy_n|^2,
$$ 
hence the two metrics $h_s$ and $\varphi _t ^{*} \,(h_{s+t})$ are linearly equivalent and therefore they share
the same Levi-Civita connexion. The flow $\varphi _t$ then preserves the Levi-Civita connexions and 
thus commutes with the parallel transport of the induced metrics on the $H_\xi (s)$'s.

\subsection{Busemann function}\label{buseman-smooth} 
Let $\tilde M$ be a Cartan Hadamard manifolds  endowed with pinched negative sectional curvature $-a^2 \leq K \leq -b^2 <0$.  The Busemann functions $B(\cdot, \xi)$
are $C^2$ for every $\xi \in \partial \tilde M$, \cite[Proposition 3.1]{Im-Hof}, and it is also known that they are $C^\infty$ in the case that $\tilde M$ is the universal cover of a closed manifold.

\smallskip

For the sake of completeness, let us give here the proof of this fact. The geodesic flow $\tilde g _t$ on $\tilde M$ is generated by the smooth vector field $Z := \frac{d}{dt}_{|t=0}\tilde g _t$
on $T^1 \tilde M$. For every $\xi \in \partial \tilde M$, the set defined by
\begin{equation}
 \tilde W^s (\xi) = \left\{\tilde v \, | \, c_{\tilde v} (+\infty) = \xi \right\} 
\end{equation}
is a weak stable leaf of $\tilde g _t$, preserved by $\tilde g_t$. It is a smooth 
submanifold of $T^1 \tilde M$ (\cite[Theorem IV.1]{Shub}) and the projection $\tilde p$ induces a diffeomorphism between $\tilde W_\xi$ and $\tilde M$. For every $\tilde v \in T^1 \tilde M$, the vector $Z(\tilde v):=\frac{d}{dt}_{|t=0}(\tilde g_t(\tilde v))$ is tangent to the flow direction at $\tilde v$ and the following holds.
\begin{equation}
D\tilde p (\tilde v)(Z(\tilde v)) = \dot{c}_{\tilde v} (0) =-\nabla B(\tilde p(\tilde v ), \xi).
\end{equation} 
Therefore, if we defined $\tilde p^{-1}(x)=\tilde v\in \tilde W_\xi$, we get $\nabla B(x,\xi) = - D\tilde p (\tilde p ^{-1} (x)) (Z(\tilde p ^{-1} (x))$
is 
a smooth vector field on $\tilde M$ and therefore $B(\cdot, \xi)$ is smooth. 

This fact will be useful in section $3$, while constructing a quasi-isometry between
$\tilde M$ and $G_A$ using horospherical coordinates.


\section{Stable holonomies for horospheres in negatively curved manifolds}
\label{connection}
A priori the parallel transport associated to the induced metrics on horospheres does not commute with the action of the geodesic flow. In a sharp contrast, at the end of Subsection \ref{Heintze}, we noticed that for Heintze groups it does. In this section, we  will describe another transport along horospheres, called {\sl the stable holonomy}, which by construction, commutes with the geodesic flow. A consequence of the equality of these a priori unrelated
two parallel transports is that the Levi-Civita connexions of the horospheres are flat and commute with the geodesic flow.
We will see in section $3$ that when these two properties 
hold true on the family of horospheres $H_\xi (s) \, , \, s \in \mathbb R$, for $\xi \in \partial \tilde M$
fixed by some element $\gamma \in \pi _1 (M)$, then 
$\tilde M$ is quasi-isometric to the Heintze group $G_A$, where $A$ is the derivative of the Poincar\'e first return map along
the periodic geodesic associated to $\gamma$.
\medskip

We now describe the construction of the stable holonomy following \cite{ASV} and \cite{KS}. It utilizes in a crucial way either the strong $1/4$-pinching  
or the relative $1/2$-pinching assumption on the curvature which corresponds to the 
'fiber bunched' condition of \cite {KS}, (see the Appendix). In fact, Proposition \ref{parallel-transport} and Proposition
\ref{stableholonomy-continuous} below are a consequence of Proposition 4.2 of \cite{KS}.  However, we will construct the stable holonomy in a way which is adjusted to our particular geometric setting and in order to make the paper self contained.
We conclude this section with Proposition \ref{local-global} and Corollary \ref{Cor}, stating that equality of the two transports on
a single horosphere implies equality on all horospheres. 
\smallskip

Throughout this section we will 
work with the tangent bundle of horospheres in $\tilde M$ which in turn, as level set of Busemann functions, are smooth submanifolds of the universal cover of $M$. Keeping the notations from the introduction, let $g_t :T^1 M\rightarrow T^1 M$  denotes the geodesic flow on $M$, {\sl i.e.},  the  
projection of $\tilde g_t$ under the map $D \pi :  T^1 \tilde M \rightarrow T^1 M$.
Let us choose a point $\xi\in \partial \tilde M$. It is a well known feature of the negative curvature of $\tilde M$, that any point in $\tilde M$ lies on a unique geodesic ray ending at $\xi$. Hence,  the canonical projection $\tilde p : T^1 \tilde M \rightarrow \tilde M$
induces a diffeomorphism from the set of unit vectors that are pointing in the direction of $\xi$ and $\tilde M$. This subset of unit tangent vectors will be denoted by $ \tilde W^{s}(\xi)$, and is usually called the (weak) stable manifold and the induced diffeomorphism  will be denoted by  $\tilde p_{\xi}$. 

\smallskip
With this identification, for every $t\in \mathbb R$ and for every $\xi \in \partial \tilde M$,  the action of the geodesic flow on $\tilde W^{s}(\xi)$
provides us with a one parameter group of diffeomorphism of $\tilde M$, 
\begin{equation}
\label{def-phi_t}
\varphi _{t,\xi} = \tilde p_{\xi} \circ \tilde g _t \circ {\tilde p}_{\xi}^{-1}.
\end{equation}

For $\tilde v_0\in T^{1} \tilde M$, let  $\xi = c_{\tilde v_0}(+\infty)$ and assume that  $\tilde p_{\xi}(\tilde v_0) = x_0$ with $c_{\tilde v_0}(0)=x_0$. By definition,  $\tilde p _\xi$ maps $\tilde W^{ss}(\tilde v_0)$ diffeomorphically onto the unique horosphere centered at $\xi$ which contains $x_0$. If we  denote this horosphere by  $H_\xi (0)$, then it also follows from the definitions that the derivative $D\tilde p_\xi (\tilde v_0)$ maps  $\tilde E^{ss}(\tilde v_0)$  isomorphically onto $T_{x_{0}} H_\xi (0)$. Finally, we note that the family of horospheres centered at $\xi$ can be parametrized by the time
parameter, {\sl i.e.},  for $s\in \mathbb{R}$ the horosphere $H_\xi (s)$ will denote  the unique horosphere in $\tilde M$, centered at $\xi$, which intersects the geodesic $c_{\tilde v_0}$ at time $s$.  By the property of invariance of the strong stable foliation by the geodesic flow, it follows that the diffeomorphisms $\varphi _{t,\xi}$ permutes the set of horospheres centred at $\xi$,
namely, $\varphi _{t,\xi} H_\xi (s) = H_\xi (s+t)$.  

\smallskip
We now turn to the main construction of this section, see \cite{ASV} and \cite{KS}.
The stable holonomy which we describe below provides a geodesic flow invariant way
to identify tangent spaces at different points on any fixed horosphere. 
We fix $x_0 \in \tilde M$ and recall that the horospheres are defined by
$$H_\xi (s) = \{x \in \tilde M \, | \, B_\xi(x) = s\},$$
where the Buseman function $B_\xi$ has been normalized such that $B_\xi(x_0) = 0$.

\begin{figure}[h]\label{fig-two2}
\centering
\includegraphics[width=3.75in]{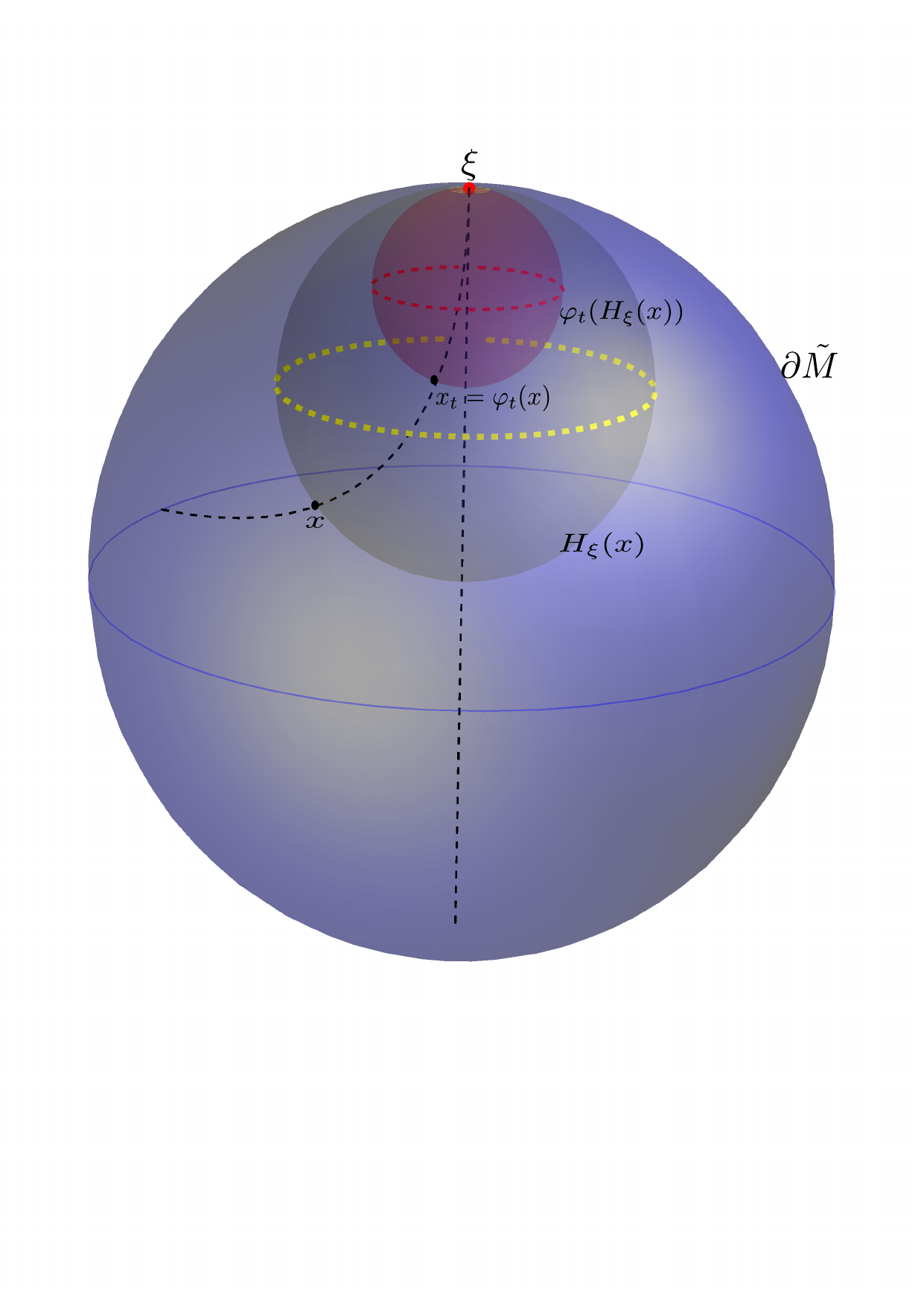}
\vspace*{-32mm}
\caption{Horospheres and action of $\varphi_t=\varphi_{t,\xi}$}
\end{figure}

We start with the following definition (see \cite[Definition 4.1]{KS}).

\begin{Def}[Stable Holonomy for Horospheres]
\label{def-stable-holonomy2} 
 
A stable holonomy is a family of maps $(x,y, \xi) \rightarrow \Pi^{\xi} _s(x,y)$, $s\in \mathbb R$, defined on the set of points $(x,y,\xi)$ such that $x,y$ belong to
the horosphere $H_\xi (s)$, and such that the following properties hold:
\begin{enumerate}
\item $\Pi ^\xi _s(x,y)$ is a linear map from $T_x H_\xi(s)$ to  $T_y H_\xi(s)$ for every $s\in \mathbb R$, $x,y \in H_\xi (s)$,
\item $\Pi ^\xi _s(x,x) = Id$ and $\Pi ^\xi _s(x,y) = \Pi ^\xi _s(z,y) \circ \Pi ^\xi _s(x,z)$ for every $s\in \mathbb R$, $x,y,z \in H_\xi (s)$
\item $\Pi ^\xi _s(x,y) = D\varphi^{-1} _{t,\xi} (\varphi _{t,\xi}(y)) \circ  \Pi ^\xi _{s+t}(\varphi_{t,\xi}(x), \varphi_{t,\xi} (y)) \circ D\varphi _{t,\xi}(x)$ for all $t \in \mathbb R$, $s\in \mathbb R,$
\item For every $\gamma \in \pi _1 (M)$, 
$
\Pi ^{\gamma \xi} _{s+B_{\gamma \xi}( \gamma x_0)} (\gamma x, \gamma y) =
D\gamma (y) \circ \Pi ^{\xi} _s (x,t) \circ (D\gamma (x))^{-1}.
$
\end{enumerate}
where in (3), $ D\varphi _{t,\xi}(z)$ denotes the differential of $\varphi _{t,\xi}$ at the point $z$.
\end{Def}

Notice that condition (2) tells that this stable holonomy, if it exists, is 'flat' and 
condition (4) that the stable holonomy is equivariant under the action 
of the fundamental group of $M$ on the set of horospheres.

Let us choose a point $\xi \in  \partial \tilde M$. In the sequel of this section, we will set $\varphi_{t}:=\varphi_{t,\xi}, t\in\mathbb{R}$
and $\tilde p_\xi = \tilde p$. Recall that the induced Riemannian metric on $H_\xi (t)$ is denoted by $h_t$, and let 
$\nabla ^t$ denote the Levi-Civita connection associated to $h_t$. 
The parallel transport with respect to $\nabla ^t$, along any path joining any two points $x$ and $y$ in $H_\xi (t)$, is an isometry between
$T_x H_\xi (t)$ and $T_y H_\xi (t)$. 
The isometry a priori depends on the path.  However, if $x, y$ in $H_\xi (t)$ are at distance less than the injectivity radius 
of $H_\xi (t)$, there exists a unique geodesic segment joining $x$ and $y$ and we will therefore denote by 
\begin{equation}\label{parallel}
P_t ^\xi (x,y)
\end{equation}
the parallel transport along this segment.

\smallskip
We now turn to the main proposition of this section that will grant us the existence of the stable holonomy along horospheres. It is a reformulation of \cite[Proposition 4.2]{KS} or of \cite[Proposition 2.2]{B}. Since we will use the construction later on, we will shortly describe it. We first need two lemmas.

\smallskip

The first  lemma gives uniform contraction properties of the 
maps $\varphi _t$ under the strong $1/4$-pinching condition on the curvature of $M$.
Let us normalize the sectional curvature $K$ of $M$, so that the following inequalities are satisfied for some constant  $1>\tau >0$,
\begin{equation}\label{pinching}
-4(1-\tau ) \leq K \leq -1.
\end{equation}
\begin{Lem}
\label{contraction-dilation}
Let $x,y$ be two points on $H_\xi (s)$ and let $X$ be a tangent vector in $T_xH_\xi (s)$. 
Then, for any $t \geq 0$, the following estimates hold  
\begin{enumerate}
\item
$\| D\varphi _t (x)( X) \|_{h_{s+t}} \leq e^{-t} \|X\|_{h_s}$,
\smallskip
\item
$\| D\varphi _t ^{-1}(x)(X)\|_{h_{s-t}}\leq e^{(2\sqrt{1-\tau})t} \| X\|_{h_s} \leq e^{(2-\tau)t} \| X\|_{h_s},$ and
\smallskip
\item
$d_{h_{s+t}}(\varphi _t (x), \varphi _t (y)) \leq e^{-t} d_{h_s}(x,y)$.
\end{enumerate}
\end{Lem}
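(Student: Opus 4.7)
The plan is to recognize $t\mapsto D\varphi_t(x)(X)$ as a stable Jacobi field along the geodesic from $x$ toward $\xi$, and then to derive the stated estimates from the standard curvature-comparison bound on the Hessian of the Busemann function.

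First I would fix $X\in T_xH_\xi(s)$ and consider a variation $\alpha:(-\epsilon,\epsilon)\to H_\xi(s)$ with $\alpha(0)=x$ and $\alpha'(0)=X$. Since $\varphi_t$ maps $H_\xi(s)$ onto $H_\xi(s+t)$ and is obtained from the geodesic flow via $\tilde p_\xi$, the two-parameter family $(u,t)\mapsto\varphi_t(\alpha(u))$ is a variation through unit-speed geodesics all asymptotic to $\xi$. Its variation field $J(t):=D\varphi_t(x)(X)$ is therefore a Jacobi field along $c(t):=\varphi_t(x)$, tangent to $H_\xi(s+t)$ at $c(t)$ and bounded as $t\to+\infty$, i.e., the unique \emph{stable} Jacobi field with $J(0)=X$. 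Equivalently, $J$ satisfies the first-order equation $J'(t)=-U(J(t))$, where $U:=\nabla^2 B_\xi$ is the Hessian of the Busemann function, evaluated at $c(t)$.

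Next I would invoke the standard Riccati comparison: along $c$ the $(1,1)$-tensor $U$ is the bounded solution of a Riccati equation driven by the sectional curvature of $\tilde M$, and under the pinching $-4(1-\tau)\le \kappa\le -1$ its eigenvalues satisfy $1\le \lambda\le 2\sqrt{1-\tau}$, i.e., $I\le U\le 2\sqrt{1-\tau}\,I$. Setting $u(t):=\|J(t)\|_{h_{s+t}}^2$, this gives $u'(t)=-2\langle U J(t),J(t)\rangle$ and consequently $-4\sqrt{1-\tau}\,u(t)\le u'(t)\le -2u(t)$. Gronwall's lemma then yields
$$e^{-2\sqrt{1-\tau}\,t}\|X\|_{h_s}\le \|D\varphi_t(x)(X)\|_{h_{s+t}}\le e^{-t}\|X\|_{h_s}$$
for every $t\ge 0$, which is (1) and simultaneously supplies the lower bound needed for (2).

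For (2), I would set $Y:=D\varphi_t^{-1}(x)(X)\in T_{\varphi_t^{-1}(x)}H_\xi(s-t)$ and apply the lower bound above to the geodesic $u\mapsto\varphi_u(\varphi_t^{-1}(x))$ with initial vector $Y$, obtaining $\|X\|_{h_s}=\|D\varphi_t(\varphi_t^{-1}(x))(Y)\|_{h_s}\ge e^{-2\sqrt{1-\tau}\,t}\|Y\|_{h_{s-t}}$, which gives the announced inequality, while $2\sqrt{1-\tau}\le 2-\tau$ is the elementary estimate $(1-\tau/2)^2\ge 1-\tau$. For (3) I would integrate (1) along a curve $\alpha$ in $H_\xi(s)$ from $x$ to $y$ of length at most $d_{h_s}(x,y)+\epsilon$: the curve $\varphi_t\circ\alpha$ joins $\varphi_t(x)$ to $\varphi_t(y)$ in $H_\xi(s+t)$ and, by applying (1) pointwise along $\alpha$, has length at most $e^{-t}(d_{h_s}(x,y)+\epsilon)$; letting $\epsilon\to 0$ finishes. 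I do not anticipate any serious obstacle; the delicate point is simply the sign bookkeeping that identifies $D\varphi_t(x)(X)$ with the stable Jacobi field and $\nabla^2 B_\xi$ with the correct shape operator of the horospheres.
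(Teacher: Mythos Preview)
Your proof is correct and follows essentially the same approach as the paper: both identify $t\mapsto D\varphi_t(x)(X)$ as the stable Jacobi field along the geodesic asymptotic to $\xi$ and then appeal to the pinched-curvature growth estimates for such fields. The paper establishes stability via the Anosov decomposition (showing $\|D\tilde g_t V\|\to 0$ for $V\in E^{ss}$) and cites Heintze--Im~Hof, Theorem~2.4, for the estimates, whereas you argue stability directly from the variation through geodesics asymptotic to $\xi$ and spell out the Riccati/Gronwall computation; these are equivalent presentations of the same argument.
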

\begin{proof}
The norm and the distance we use above is computed with respect to the  induced Riemannian metric on the corresponding horosphere. Recall that 
a  {\it stable} Jacobi field $Y(t)$ along a geodesic ray $c_{\tilde v} (t)$, $t>0$, is a bounded Jacobi field, see  \cite[Definition 2.1]{Im-Hof}. The proof of these inequalities is a direct consequence of the estimate of the growth of the  stable Jacobi fields as done  in  \cite[Theorem 2.4]{Im-Hof}. 

\smallskip

In fact, we only need to show that 
$D\varphi _t (X)$ is a stable Jacobi field. This follows from the Anosov property of the geodesic flow of $M$, see \cite[Appendice 21]{ArAv}. Indeed, if $X$ is a tangent vector in $T_x H_\xi (s)$ at the point $x$, then $X= D\tilde p (\tilde v)(V)$, where $V \in E^{ss}(\tilde v) \subset T_{\tilde v} T^1 \tilde M$, and  $\tilde v$ is the unit vector in $T_x \tilde M$
perpendicular to $H_\xi (s)$ and pointing toward $\xi$. Therefore, by applying the chain rule to Equation~(\ref{def-phi_t}), and recalling that $x={\tilde p}({\tilde v})$, we obtain that

\begin{equation}
\label{eq:flowderivative}
D\varphi _t (x)(X) = D\tilde p (\tilde g_t (\tilde v))\circ D\tilde g_t (\tilde v)(V).
\end{equation}

\smallskip

Since the geodesic flow of $M$ is Anosov and $V\in E^{ss} (\tilde v)$, it follows that  
\begin{equation}
\label{eq:norm1}
\lim _{t \to \infty} \| D\tilde g _t (\tilde v)(V) \| =0,
\end{equation}
which implies that  $ \lim _{t \to \infty} \|D\varphi _t (x)(X) \|=0$. Indeed the map $\tilde p : T^1 \tilde M \rightarrow \tilde M$ is defined on the quotient (by $\pi_1(M)$) by $p: T^1 M \rightarrow M$, the compactness of $M$ grants us that $\tilde p$  as well as $D \tilde p$ are  bounded.
Hence, it follows that $D\varphi _t (X)$ is a stable Jacobi field and this concludes the proof of the first assertion of the Lemma \ref{contraction-dilation}. The other assertions follow easily.

\end{proof}

Since $(\tilde M, \tilde g)$ covers the  closed manifold $(M, g)$,  for each $\sigma \in [0,1]$, we are able to obtain a uniform control on the action of $\varphi_\sigma$ as follows. 
We first study the behavior of the family of horospheres $H_\xi (s)$, $s\in \mathbb R$, orthogonal to 
the geodesic $c_{\tilde v}(s)$ such that $c_{\tilde v}(+\infty) = \xi$. 
By assertion (3) of  Proposition \ref{one-all}, we will assume from now on that the injectivity radius of every horosphere is bounded below
by $\rho >0$. For each $x\in H_\xi (s)$, we denote $c_x$ the geodesic passing through $x$ asymptotic 
to $\xi$, ie. $c_x (+\infty) = c_{\tilde v} (+\infty) =\xi$ parametrized in such a way that $c_x (s) =x$.

\begin{Lem}
\label{square-estimate}
For every $R>0$, there exists a constant $C_R >0$ such that for any $s\in \mathbb R$, any $\sigma \in [0,R]$, any
two points $x,y \in H_\xi (s)$ such that  $d_{H_\xi (s)} (x,y) < \rho$, and any $X \in T_x H_\xi (s)$, the following holds.  
\begin{equation}
\label{eq:sigma}
\| \left(D\varphi _\sigma ^{-1} (\varphi_\sigma (y))\circ P^\xi _{s+\sigma} (\varphi _\sigma (x), \varphi _\sigma (y)) \circ D\varphi _\sigma (x)  - P^\xi _s (x,y)\right) (X) \|_{h_s}\leq C_R d_{h_s}(x,y)\,\|X\|_{h_s}.
\end{equation}
\end{Lem}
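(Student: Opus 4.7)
The plan is to view the left-hand side of (\ref{eq:sigma}) as the value at $u = u_0 := d_{h_s}(x,y)$ of a curve of linear operators that vanishes at $u = 0$ and whose derivative is uniformly bounded. First I parametrize the unique minimizing $h_s$-geodesic from $x$ to $y$ by arc length, $\gamma : [0, u_0] \to H_\xi(s)$, and for $u \in [0, u_0]$ set
\begin{equation*}
F(u) := D\varphi_\sigma^{-1}(\varphi_\sigma(\gamma(u))) \circ P_{s+\sigma}^\xi(\varphi_\sigma(x), \varphi_\sigma(\gamma(u))) \circ D\varphi_\sigma(x),
\end{equation*}
a linear map $T_x H_\xi(s) \to T_{\gamma(u)}H_\xi(s)$. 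Transporting $F(u)$ back to $T_xH_\xi(s)$ via the $h_s$-parallel transport along $\gamma$, define the endomorphism-valued curve
\begin{equation*}
Q(u) := P_s^\xi(\gamma(u), x) \circ F(u) - \mathrm{Id}_{T_xH_\xi(s)}.
\end{equation*}
Since $\varphi_\sigma^{-1}\circ\varphi_\sigma = \mathrm{Id}$, one has $F(0) = \mathrm{Id}$ and hence $Q(0) = 0$. The quantity to estimate is exactly $\|Q(u_0)\,X\|_{h_s}$, so it is enough to bound $\|Q'(u)\|$ uniformly by a constant $C$ on $[0, u_0]$.

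To do so I compute $Q'(u)$ by the chain rule and express it as a sum of terms involving: the differentials $D\varphi_\sigma$ and $D\varphi_\sigma^{-1}$, whose operator norms are bounded uniformly in $\sigma \in [0,1]$ by Lemma \ref{contraction-dilation}; the Christoffel symbols of the induced metrics $h_s$ and $h_{s+\sigma}$; a Jacobi field along the one-parameter family of $h_{s+\sigma}$-geodesics from $\varphi_\sigma(x)$ to $\varphi_\sigma(\gamma(u))$, which governs how the parallel transport $P_{s+\sigma}^\xi$ varies with its endpoint; and the derivative of $\varphi_\sigma$ in the $\sigma$ direction, which is the smooth geodesic vector field from Subsection \ref{buseman-smooth}. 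Each of these is $C^\infty$-smooth on $T^1\tilde M$ and, by Proposition \ref{one-all}, depends continuously on the unit vector $\tilde v$ determining the horosphere. Because every such $\tilde v$ can be translated into a compact fundamental domain of $T^1 M$ by an element of $\pi_1(M)$, the corresponding norms descend to continuous functions on the compact space $T^1 M \times [0,1]$, so maximising yields the uniform bound $\|Q'(u)\| \leq C$. Integrating then gives $\|Q(u_0)\,X\|_{h_s} \leq C\, u_0\, \|X\|_{h_s}$, which is exactly (\ref{eq:sigma}).

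The main obstacle I expect is not the computation of $Q'(u)$ itself but the uniformity of its bound in $s \in \mathbb{R}$ and $\sigma \in [0,1]$: since $s$ ranges over an unbounded parameter and the two horospheres $H_\xi(s)$, $H_\xi(s+\sigma)$ vary along the geodesic flow, one must invoke the cocompact action of $\pi_1(M)$ on $\tilde M$ together with the smooth dependence of horospheres on their defining unit vector, established in Proposition \ref{one-all}, in order to reduce the estimate to a compact parameter set on which all continuous quantities attain a finite maximum.
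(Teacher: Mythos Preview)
Your approach is essentially the same as the paper's: both write the difference as the integral over $[0,\,d_{h_s}(x,y)]$ of the $u$-derivative of an operator-valued curve that vanishes at $u=0$, and then bound the integrand uniformly by invoking the compactness of $M$ (via the $\pi_1(M)$-action) together with the smooth dependence of $\varphi_\sigma$ and of the horospherical metrics on their defining data. One small remark: the ``derivative of $\varphi_\sigma$ in the $\sigma$-direction'' does not actually occur as a term in $Q'(u)$, which is a $u$-derivative; the uniform control of $D\varphi_\sigma$ and its higher covariant derivatives over $\sigma\in[0,1]$ comes directly from compactness of $T^1M\times[0,1]$, exactly as you argue in your final paragraph.
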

\begin{proof}
Let us first assume that $X\in T_xH_\xi(s)$ has a unit norm. 
Define $X_\sigma := D\varphi _\sigma (x) X$ and let
$c: [0,d] \to H_\xi (s)$ be the geodesic segment of $H_\xi (s)$ between $x$ and $y$, where $d=d_{h_{s}}(x, y)$. 
Let  
$
c_\sigma(u) : [0,d]\longrightarrow H_\xi (s+\sigma ),
$
be the geodesic segment, parametrized with constant speed, joining $\varphi_\sigma (x)$ and $\varphi_\sigma (y)$ which exists by Lemma \ref{contraction-dilation}, (3). Notice that also by Lemma \ref{contraction-dilation} we have 
\begin{equation}\label{c-sigmadot}
e^{-(2-\tau)} \leq |\dot{c}_{\sigma}| \leq 1.
\end{equation}
We have,
\begin{eqnarray*}
&D\varphi _\sigma ^{-1} (\varphi_\sigma (y))\circ P^\xi _{s+\sigma} (\varphi _\sigma (x), \varphi _\sigma (y)) \circ D\varphi _\sigma (x)  - P^\xi _s (x,y) =\\
&\int_0^d \frac{d}{du} \left(D\varphi _\sigma ^{-1} (c_\sigma (u))\circ 
\left(P^\xi _{s+\sigma} (\varphi _\sigma (x), c_\sigma (u)) \circ D\varphi _\sigma (x) 
-D\varphi _\sigma (c(u)) \circ P^\xi _s (x,c(u)) \right) \right)du.
\end{eqnarray*}

By compactness of $M$ and by (\ref{def-phi_t}), the norm of every covariant derivative of $\varphi ^\xi _\sigma$
and $(\varphi ^\xi _\sigma)^{-1}$,
$\xi \in \partial  \tilde M$ and $\sigma \in [0,R]$ is 
bounded above by a constant depending on the degree of derivation. In particular, there exists a constant $C_R>0$
such that the integrand in the right hand side term above is bounded above by $C_R$.

We deduce that
\begin{equation}
\label{eq:finalbound}
\| P^\xi_s (x,y) (X)- D\varphi_\sigma^{-1}(\varphi_\sigma (y))\circ P^\xi_{s+\sigma}(\varphi _\sigma (x), \varphi _\sigma (y))\circ D\varphi _\sigma (x)(X) \|_{h_{s}} \leq C d_{h_s}(x,y).
\end{equation}
If the norm of $X$ is not equal to $1$,  the desired inequality follows by simple modifications of the proof above.

\end{proof}

\begin{Rem}
Notice that the constant $C$ in the above proposition does not depend on the horosphere $H_\xi (s)$ nor even on $\xi$. More precisely, in formula \ref{eq:finalbound} the parallel transport operators are isometries, hence their norms are bounded by one. Only the differential of $\phi_\sigma$ matters. These maps, for $\sigma\in[0,1]$ are projections, by $\tilde p$ to $\tilde M$, of the geodesic flow on $T^1\tilde M$ restricted to the submanifolds $\tilde W^s(\xi )$. Now by compactness of $M$, $T^1(M)$ and $[0,1]$, $\tilde p$ and the geodesic flow on $T^1\tilde M$ have bounded derivatives at any order. Finally the arguments in Subsection \ref{sub:flathorosphere} show that the manifolds $\tilde W^s(\xi)$ have uniformly bounded geometry at any order with constants independent of $\xi$.

Notice however that independence on $\xi$ is not really needed in our argument.
\end{Rem}

We now turn to prove the existence of a stable holonomy. In the following Proposition, we assume that 
the sectional curvature satisfies 
either the strong $1/4$-pinching or relative $1/2$-pinching assumption. We will then describe possible generalisations based on the results in \cite{HB2}. However stable holonomy may exist without
any pinching assumption but just under the negativity of the sectional curvature. We don't know any counterexample to this. For every $\tilde v \in T^1 \tilde M$, we consider 
the family of horospheres centered at $\xi := c_{\tilde v} (+\infty)$, which we parametrize as $H_\xi (t)$, $t\in \mathbb R$, where the parameter $t=0$ corresponds to the horosphere containing the base point of $\tilde v$.

\begin{Pro}
\label{parallel-transport}
Let $M$ be a closed Riemannian manifold with pinched negative curvature satisfying either 
the strong $1/4$-pinching condition $-4(1-\tau ) \leq \kappa \leq -1$ or the relative $1/2$-pinching condition.
Let $\tilde v$ be a unit vector tangent to $\tilde M$.
Let $\xi = \lim_{t\to +\infty}c_{\tilde v}(t) \in  \partial \tilde M$. Then, 
\begin{enumerate}[(i)]
\item For every $s \in \mathbb R$, $x,y \in H_\xi (s)$, there exists a linear map 
$$\Pi ^\xi _s (x, y) : T_x H_\xi (s) \to T_y H_\xi (s)$$
satisfying  conditions (1), (2), (3) in Definition (\ref{def-stable-holonomy2}),
\item $\| \Pi ^\xi _s (x, y) - P_s ^\xi (x,y)\| \leq C d_{h_s}(x,y)$ for all $x,y$ such that $d_{h_s} (x,y) < \rho$.
\item  Properties (i) and (ii)  uniquely determine the stable holonomy.
\item The stable holonomy is $\pi _1 (M)$-equivariant, ie for every $\gamma \in \pi _1 (M)$, we have
$$ \Pi ^{\gamma \xi} _{s+B_{\gamma \xi}( \gamma x_0)} (\gamma x, \gamma y) =
D\gamma (y) \circ \Pi ^{\xi} _s (x,t) \circ (D\gamma (x))^{-1}.$$
\end{enumerate}
\end{Pro}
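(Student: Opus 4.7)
The plan is to adapt the construction of \cite{ASV,KS}: define the stable holonomy as a limit obtained by pushing a pair of points into the asymptotic direction via the flow $\varphi_t$, computing parallel transport there (where the points are much closer together), and conjugating back by $D\varphi_t$. For $x,y\in H_\xi(s)$ set
$$
A_n(x,y) := D\varphi_n^{-1}(\varphi_n(y))\,\circ\, P^\xi_{s+n}(\varphi_n(x),\varphi_n(y))\,\circ\, D\varphi_n(x).
$$
By Lemma \ref{contraction-dilation}(3), $d_{h_{s+n}}(\varphi_n(x),\varphi_n(y))\leq e^{-n}\,d_{h_s}(x,y)$, which for $n$ large enough is smaller than the uniform lower bound $\rho$ on the injectivity radius from Proposition \ref{one-all}(3), so $P^\xi_{s+n}$ makes sense. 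I then define
$$
\Pi^\xi_s(x,y) := \lim_{n\to\infty} A_n(x,y).
$$

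The convergence is the main computation. Writing $\varphi_{n+1}=\varphi_1\circ\varphi_n$, one obtains
$$
A_n-A_{n+1} = D\varphi_n^{-1}(\varphi_n y)\circ\Big[P^\xi_{s+n}(\varphi_n x,\varphi_n y) - D\varphi_1^{-1}(\varphi_{n+1}y)\circ P^\xi_{s+n+1}(\varphi_{n+1}x,\varphi_{n+1}y)\circ D\varphi_1(\varphi_n x)\Big]\circ D\varphi_n(x).
$$
Applying Lemma \ref{square-estimate} at base time $s+n$ with $\sigma=1$ bounds the bracket, applied to a vector $Z$, by $C\,e^{-n} d_{h_s}(x,y)\,\|Z\|_{h_{s+n}}$. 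Combined with Lemma \ref{contraction-dilation}, the inner $D\varphi_n(x)$ contributes a factor $e^{-n}$ and the outer $D\varphi_n^{-1}(\varphi_n y)$ contributes at most $e^{(2-\tau)n}$, giving
$$
\|A_n(x,y)(X) - A_{n+1}(x,y)(X)\|_{h_s} \;\leq\; C\,e^{-\tau n}\,d_{h_s}(x,y)\,\|X\|_{h_s}.
$$
Strict $1/4$-pinching forces $\tau>0$, so the series is summable. Hence $(A_n)$ is Cauchy, its limit is a well-defined linear map, and telescoping the bound immediately gives the quantitative estimate (ii).

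I then verify the axioms of Definition \ref{def-stable-holonomy2}. Linearity (1) and $\Pi^\xi_s(x,x)=\mathrm{Id}$ are immediate. The cocycle identity $\Pi^\xi_s(x,y)=\Pi^\xi_s(z,y)\circ\Pi^\xi_s(x,z)$ is the delicate point: $P^\xi_{s+n}$ is not truly composable along a triangle, but the defect $P^\xi_{s+n}(\varphi_n z,\varphi_n y)\circ P^\xi_{s+n}(\varphi_n x,\varphi_n z) - P^\xi_{s+n}(\varphi_n x,\varphi_n y)$ is a parallel transport discrepancy around a geodesic triangle of $h_{s+n}$-diameter $O(e^{-n})$, hence of order $O(e^{-2n})$ by the standard curvature-holonomy estimate (the relevant geometry is uniformly bounded by Proposition \ref{one-all}). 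After sandwiching by $D\varphi_n^{\pm 1}$ this is still $O(e^{-\tau n})$ and disappears in the limit. The flow invariance (3) is essentially built in: the sequence $A_n$ starting from $(\varphi_t x,\varphi_t y)$ at base time $s+t$ is an index shift of the sequence starting at $(x,y)$, so the two limits are related by $D\varphi_t^{\pm 1}$ as required. For points at distance $\geq\rho$ I extend $\Pi^\xi_s$ by iterating the cocycle identity along a chain of $\rho/2$-close intermediate points on $H_\xi(s)$; independence of the chain follows from flatness.

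For the uniqueness (iii), if $\Pi$ and $\Pi'$ both satisfy (i) and (ii), then by axiom (3) one may push the difference arbitrarily far out: $\Pi-\Pi' = D\varphi_t^{-1}\circ(\Pi-\Pi')(\varphi_t x,\varphi_t y)\circ D\varphi_t$, and combining (ii) with Lemma \ref{contraction-dilation} gives $\|\Pi-\Pi'\|(x,y)\leq 2C\,e^{-\tau t}\,d_{h_s}(x,y)\to 0$. Equivariance (iv) is automatic: any $\gamma\in\pi_1(M)$ acts by a Riemannian isometry of $\tilde M$, sends $H_\xi(s)$ isometrically onto $H_{\gamma\xi}(s)$, and commutes with the geodesic flow and with parallel transport; therefore $\gamma_*\circ A_n(x,y)\circ\gamma_*^{-1}=A_n(\gamma x,\gamma y)$, and passing to the limit yields the claim. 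The main obstacle I expect is the cocycle identity (2), which requires carefully controlling the holonomy-around-triangles defect on $H_\xi(s+n)$ together with the changing metric $h_{s+n}$, and ensuring that the pinching exponent $\tau$ gives precisely the summable decay needed; everything else follows along standard lines once the convergence of $(A_n)$ is in hand.
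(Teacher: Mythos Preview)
Your proof is correct and follows the same construction as the paper: define $\Pi^\xi_s$ as the limit of $D\varphi_n^{-1}\circ P^\xi_{s+n}\circ D\varphi_n$, establish convergence and estimate (ii) by the telescoping bound $\|A_{n+1}-A_n\|\le C e^{-\tau n}d_{h_s}(x,y)$ coming from Lemmas~\ref{contraction-dilation} and~\ref{square-estimate}, and handle (iii) and (iv) exactly as you do (the paper in fact defers the verification of axioms (2) and (3) to \cite{KS} rather than sketching them). The one minor implementation difference is the treatment of points with $d_{h_s}(x,y)\ge\rho$: instead of your chain extension via the already-proved cocycle identity, the paper simply begins the approximating sequence at a time $t_0=t_0(R)$ large enough that $d_{h_{s+t_0}}(\varphi_{t_0}x,\varphi_{t_0}y)<\rho$, so that $P^\xi_{s+t_0+j}$ is defined from the outset.
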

\begin{proof}
The proof follows closely the methods given in \cite[Proposition 4.2]{KS}. We reproduce here 
only the part of the construction, modified to our setting, which we will need in the sequel.
The proof is organized into the two cases corresponding to the different curvature assumptions.

{\bf The case of strong $1/4$-pinching of the curvature.}

\smallskip

Recall that we have denoted $\varphi _t := \varphi _{\xi, t}$.
Let us consider $x,y \in H_\xi (s)$ such that $d_{H_\xi (s)} (x,y) \leq R $, for some fixed $R$. For every $x\in H_\xi (s)$, denote $x_t := \varphi _t (x) \in H_\xi (s+t)$.
By Lemma \ref{contraction-dilation} (3),
there exists $t_0 := t_0 (R) \geq 0$ such that $d_{H_\xi (s+t_0)} (x_{t_0}, y_{t_0}) < \rho$.
\smallskip 
 
Let us turn to proving assertion (i). For every $t\in \mathbb R$, define 
$$
c_t : [0,1] \to H_\xi (s+t)
$$
the geodesic segment, parametrized with constant speed, between $x_t$ and $y_t$ 
which is well defined when their distance is less than $\rho$.

For $x,y \in  H_\xi (s)$ we define
\begin{equation}\label{stableholonomy=lim}
\Pi ^\xi _s (x,y) = \lim _{t\to \infty} d\varphi _{t} ^{-1} (y_{t}) \circ  P^\xi _{t} (x_{t}, y_{t}) \circ d\varphi_{t}(x) .
\end{equation} 
Note that the term $P^\xi _{t} (x_{t}, y_{t})$ in the limit is well defined for all  $t\geq t_0$
since the distance between $x_t$ and $y_t$ is decreasing. Let us show that the above limit exists. Define for $j\geq 0$, $x,y \in H_\xi (s)$,
$$
\Pi^\xi _{s,j} (x,y) := 
d\varphi _{t_0 +j} ^{-1} (y_{t_0 +j}) \circ  P ^\xi_{s+t_0 +j} (x_{t_0 +j}, y_{t_0 +j}) \circ d\varphi_{t_0 + j}(x) .
$$

We have for every $N\geq 0$,
\begin{equation}\label{approx-N}
\Pi^\xi _{s,N} (x,y) = \Pi^\xi _{s,0} (x,y) + \sum _{j=0}^{N-1} \left(\Pi^\xi _{s,j+1} (x,y) - \Pi^\xi _{s,j} (x,y)\right).
\end{equation}

Each term in the above sum is expanded as
\begin{eqnarray*}
&\left( \Pi _{s,j+1}^\xi  - \Pi _{s,j} ^\xi \right) (x,y) =  \\
& D\varphi _{t_0+j} ^{-1} (y_{t_0 +j}) \circ \big[  D\varphi _1 ^{-1}(y_{t_0 +j+1})\circ P^\xi _{s+ t_0 +j+1} (x_{t_0 +j+1},y_{t_0 +j+1})\circ 
D\varphi _1 (x_{t_0 +j}) -P_{c_j}  (x_{t_0 +j},y_{t_0 +j})\big] \circ D\varphi _{t_0+j} (x),  
\end{eqnarray*}
hence, by Lemma \ref{square-estimate}, we get
\begin{equation}
\label{norm-square}
\| \left( \Pi _{s,j+1}^\xi  - \Pi _{s,j}^\xi \right) (x,y) \| \leq 
C \|D\varphi _{t_0 +j} ^{-1} (y_{t_0 +j})\| \|D\varphi _{t_0 +j} (x) \| d_{h_{s+t_0 +j}}(x_{t_0 +j},y_{t_0 +j}).
\end{equation}

Assertion (3) of Lemma \ref{contraction-dilation} implies that 
\begin{equation}\label{con-unif}
d_{h_{t_0 +s+j}}(x_{t_0 +j}, y_{t_0+j})\leq e^{-{(t_0 +j)}}d_{h_s}(x,y).
\end{equation}
Substituting this inequality back in (\ref{norm-square}) and using the estimates
(1) and (2) of Lemma \ref{contraction-dilation} yield that 
\begin{equation}
\label{summand-estimate}
\| \left( \Pi _{s,j+1}^\xi  - \Pi _{s,j}^\xi \right) (x,y) \| \leq 
C e^{-\tau (t_0 +j)}d_{h_s}(x,y).
\end{equation}
Therefore, the limit in (\ref{stableholonomy=lim}) exists and is well defined. The $\pi _1 (M)$-invariance is 
obvious and proofs of the others parts of this proposition are the same as in those of Theorem 4.2 of \cite{KS}.

\smallskip

{\bf The case of relative $1/2$-pinching of the curvature}\newline
In this case we use a result of B. Hasselblatt stating that the geodesic flow $g_t$ of
a closed manifold with a relatively $1/2$-pinched negative curvature satisfies the following 'bunching' condition,
\cite{HB} Theorem 6. The geodesic flow $g _t$ on $T^1 M$ is $\alpha$-bunched, $\alpha >0$,
if there exists functions $\mu _{\pm} : T^1 M \times \mathbb R_+  \rightarrow (0,1)$ such that for every $v \in T^1 M$, $X \in E^{ss} (v)$ and $t>0$,
\begin{equation}\label{bunch1}
\mu _{-}(v, t) \|X\| \leq \| D g _t (X) \| \leq \mu _{+}( v, t) \|X\|
\end{equation}
and
\begin{equation}\label{bunch2}
\lim_{t\to \infty} \sup _{v\in T^1 M} \mu _{+}( v, t)^{2/\alpha} \mu _{-}( v, t)^{-1} =0.
\end{equation}
\begin{Thm}[\cite{HB}, Theorem 6]\label{theo:1/2}
Let $M$ be a closed Riemannian manifold with relative $1/2$-pinched negative curvature.
Then the geodesic flow of $M$ is $(1+\epsilon)$-bunched for some $\epsilon >0$.
\end{Thm}
It turns out that in the proof of this Theorem, it is shown that the convergence in (\ref{bunch2})
is exponential, ie. there exists $\tau >0$ and $A>0$ such that
\begin{equation}\label{bunch3}
\sup _{v\in T^1 M} \mu _{+}( v, t)^{2/\alpha} \mu _{-}( v, t) ^{-1} \leq A \,e^{-\tau t}.
\end{equation}
The proof of the Proposition in the relative pinching case \ref{parallel-transport} is then similar
to the one under the strong pinching assumption.  We first notice that (\ref{bunch1}) and 
(\ref{bunch2})
lift to the universal cover into
\begin{equation}\label{tildebunch1}
\tilde \mu _{-}(\tilde v, t) \|X\| \leq \| D \tilde g _t (X) \| \leq \tilde \mu _{+}( \tilde v, t) \|X\|
\end{equation}
and
\begin{equation}\label{tildebunch2}
\sup _{\tilde v\in T^1 \tilde M} \tilde \mu _{+}( \tilde v, t)^{2/\alpha} 
\tilde \mu _{-}( \tilde v, t)^{-1} \leq A\, e^{-\tau t},
\end{equation}
where $t>0$, $X\in E^{ss} (\tilde v )$ and the functions
$\tilde \mu _{\pm} : T^1 \tilde M \times \mathbb R_+  \rightarrow (0,1)$ are invariant under the action 
of the fundamental group of $M$ on the first variable.
By (\ref{def-phi_t}), recall that 
$$
\varphi _{t,\xi} = \tilde p_{\xi} \circ \tilde g _t \circ {\tilde p}_{\xi}^{-1}.
$$
For $\tilde v \in T^1 \tilde M$, $x= \pi (\tilde v)$ and $\xi = c_{\tilde v} (+\infty)$, denote by
$$
\mu _{\pm} ^{\xi} (x,t) := \tilde \mu _{\pm} (\tilde v, t).
$$
Since there exists $C>0$ such that $C^{-1} \leq \| D\tilde p_{\xi}^{\pm 1} \| \leq C$ by compactness of $M$,
the above relations (\ref{tildebunch1}) and (\ref{tildebunch2}) translates into
\begin{equation}\label{phi-bunch1}
C ^{-2} \mu _{-}^\xi (x, t) \|X\| \leq \| D \varphi _{t,\xi} (X) \| \leq C^2 
\mu _{+}^\xi ( x, t) \|X\|
\end{equation}
and
\begin{equation}\label{phi-bunch2}
\sup _{x\in \tilde M, \xi \in \partial \tilde M}  \mu _{+} ^\xi (x, t)^{2/\alpha} 
\mu _{-}^\xi (x, t) ^{-1} \leq A \,e^{-\tau t},
\end{equation}
where $X$ is a vector tangent at $x$ to the horosphere
centered at $\xi$ and passing through $x$.
Since $\alpha = 1+\epsilon$ and 
$\mu_{+}^{\xi} (x,t) = \tilde{\mu} _{+} (\tilde v, t) <1$ we can choose 
by (\ref{phi-bunch2}) $t_0>0$ and $0<\theta <1$ such that for every 
$x\in \tilde M$,
\begin{equation}\label{t_0}
\mu _{+} ^\xi (x, t_0)^{2} \mu _{-}^\xi (x, t_0) ^{-1} \leq \theta
\end{equation}
We now argue as in the previous case. We define
$$
\Pi^\xi _{s,j} (x,y) := 
d\varphi _{t_0 (1+j)} ^{-1} (y_{t_0 (1+j)}) \circ  P ^\xi_{s+t_0(1 +j)} (x_{t_0(1 +j)}, y_{t_0 (1+j)}) \circ d\varphi_{t_0 (1+ j)}(x) 
$$
and get similarly as in (\ref{norm-square})
\begin{equation}
\label{phi-norm-square}
\| \left( \Pi _{s,j+1}^\xi  - \Pi _{s,j}^\xi \right) (x,y) \| \leq 
C_1 \|\left(D\varphi _{t_0 (1 +j)} (y)\right)^{-1}\| \|D\varphi _{t_0(1 +j)} (x) \| d_{h_{s+t_0 (1 +j)}}(x_{t_0(1 +j)},y_{t_0 (1+j)}).
\end{equation}
There is here a slight difference with the previous case coming from the fact that 
the term $\|\left(D\varphi _{t_0 (1 +j)} (y)\right)^{-1}\| \|D\varphi _{t_0(1 +j)} (x) \|$ in 
(\ref{phi-norm-square}) cannot be estimated through 
the estimates of Lemma \ref{contraction-dilation} which are uniform in $x$ and $y$.
Instead, we argue like in Lemma 4.3 of \cite{KS} : denoting $x_k := \varphi _{t_0 (1 +k)} (x)$,
and similarly replacing $x$ by $y$,
we have
$$
D\varphi _{t_0 (1 +j)} (x)= D\varphi _{t_0} (x_{t_0 (1 +j-1)}) \circ .... \circ D\varphi _{t_0} (x),
$$
and
$$
D\varphi _{t_0 (1 +j)} (y)= D\varphi _{t_0} (y_{t_0 (1 +j-1)}) \circ .... \circ D\varphi _{t_0} (y),
$$
hence
\begin{equation}\label{product1}
\|\left(D\varphi _{t_0 (1 +j)} (y)\right)^{-1}\| \|D\varphi _{t_0(1 +j)} (x) \| 
\leq \Pi _{k=0}^{k=j-1} \left( \| D\varphi _{t_0}(y_k)^{-1} \| \| D\varphi _{t_0}(x_k) \|\right),
\end{equation}
therefore,
\begin{equation}\label{product2}
\|\left(D\varphi _{t_0 (1 +j)} (y)\right)^{-1}\| \|D\varphi _{t_0(1 +j)} (x) \| 
\leq \Pi _{k=0}^{k=j-1} \left( \| D\varphi _{t_0}(y_k)^{-1} \| \| D\varphi _{t_0}(y_k) \|\right)
\Pi _{k=0}^{k=j-1}\frac{\| D\varphi _{t_0}(x_k)\|}{\| D\varphi _{t_0}(y_k) \|}.
\end{equation}
Let us estimate the last product in (\ref{product2}). 
Since for every $x\in \tilde M$ we have 
$C_2 ^{-1} \leq \|D\varphi_{t_0}(x)\| \leq C_2$ for 
$C_2 = C^2 \sup _x (\mu _{+}^{\xi} (x,t_0))$ by (\ref{phi-bunch1}),
we deduce from Lemma \ref{square-estimate} that 
\begin{equation}\label{eq:sigma1}
\| \left(P^{\xi} _{s+t_{0}} (\varphi _{t_0} (x), \varphi _{t_0} (y)) \circ D\varphi _{t_0} (x)  
- D\varphi _{t_0} (y) \circ P^\xi _s (x,y)\right) (X) \|_{h_s}\leq C_3\,C_R d_{h_s}(x,y)\,\|X\|_{h_s},
\end{equation}
and since the parallel transport is an isometry we have
\begin{equation}\label{eq:sigma2}
| \|D\varphi _{t_0} (x)\| - \|D\varphi _{t_0} (y)\| | 
\leq C_3\,C_R d_{h_s}(x,y).
\end{equation}
We therefore get
\begin{equation}\label{eq:sigma3}
\vert \left(1-\frac{\|D\varphi _{t_0} (x)\|}{\|D\varphi _{t_0} (y)\|} \right)\vert
\leq C_4\,C_R d_{h_s}(x,y),
\end{equation}
where $C_4 = C_2 \, C_3$.
Now, recalling that the sectional curvature of $M$ satisfy $K \leq -a^2 <0$ for some $a$,
we deduce that $d(x_k, y_k) \leq e^{-at_0 (1+k)}$ and thus that there exists $C_5 >0$ such that for 
every $k$,
\begin{equation}\label{eq:sigma4}
\Pi _{k=0}^{k=j-1}\frac{\| D\varphi _{t_0}(x_k)\|}{\| D\varphi _{t_0}(y_k) \|}
\leq C_5.
\end{equation}
From (\ref{phi-norm-square}), (\ref{product2}) and (\ref{eq:sigma4}), we obtain
\begin{equation}
\label{phi-norm-square1}
\| \left( \Pi _{s,j+1}^\xi  - \Pi _{s,j}^\xi \right) (x,y) \| \leq 
C_6 \Pi _{k=0}^{k=j-1} \left( \| D\varphi _{t_0}(y_k)^{-1} \| \| D\varphi _{t_0}(y_k) \|\right)
d_{h_{s+t_0 (1 +j)}}(x_{t_0(1 +j)},y_{t_0 (1+j)}).
\end{equation}
From (\ref{phi-bunch1}) and (\ref{phi-norm-square1}) we get
\begin{equation}
\label{phi-norm-square2}
\| \left( \Pi _{s,j+1}^\xi  - \Pi _{s,j}^\xi \right) (x,y) \| \leq 
C_6 \Pi _{k=0}^{k=j-1} \left( \left(\mu ^{\xi} _{-} (y_k, t_0)\right)^{-1}
 \mu ^{\xi} _{+} (y_k, t_0)\right)d_{h_{s+t_0 (1 +j)}}(x_{t_0(1 +j)},y_{t_0 (1+j)}).
\end{equation}
Now, similarly as in Lemma 1.1 of \cite{BW} we see that
\begin{equation}\label{distance}
d_{h_{s+t_0 (1 +j)}}(x_{t_0(1 +j)},y_{t_0 (1+j)}) \leq C_7 \,\Pi _{k=0}^{k=j-1}\mu ^{\xi} _{+} (y_k, t_0),
\end{equation}
hence from (\ref{phi-norm-square2}) we deduce 
\begin{equation}
\label{phi-norm-square3}
\| \left( \Pi _{s,j+1}^\xi  - \Pi _{s,j}^\xi \right) (x,y) \| \leq 
C_8 \Pi _{k=0}^{k=j-1} \left( \left(\mu ^{\xi} _{-} (y_k, t_0)\right)^{-1}
 \left(\mu ^{\xi} _{+} (y_k, t_0)\right)^2\right).
\end{equation}
By (\ref{t_0}) we therefore have
\begin{equation} 
\label{phi-norm-square3}
\| \left( \Pi _{s,j+1}^\xi  - \Pi _{s,j}^\xi \right) (x,y) \| \leq C_9 \,\theta ^{j+1}
\end{equation}
and we conclude as in the previous case.

\end{proof}
{\bf General relative pinching.}
Theorem \ref{theo:1/2} can be extended; indeed, following \cite{HB2}, one could consider the situation of a more general relative pinching. In order to get a result with the same technique than in \cite{HB}, that is comparison theorems for the Riccati equations we need to combined strong pinching and relative pinching.
More precisely, let us recall the statement of Theorem 4.3 in \cite{HB2}.

\begin{Thm}\label{theo:general}
Let $a$ and $b$ satisfy $0\leq b \leq a\leq 1$. The geodesic flow of a closed negatively curved Riemannian manifold which is $b$-pinched and relatively $a$-pinched  is 
$$C(a, b)+\epsilon-\textrm{bunched}\,,$$
for some positive $\epsilon$, where $C(a, b)=a-b+\sqrt{(a+b)^2+4(1-a)b}$.
\end{Thm}
Like in the article \cite{HB} the $\epsilon >0$ is small and it appears because how pinching assumptions are strict. Notice that when the upper sectional curvature approaches zero, even though a relative pinching is given the comparison may not give the bunching, the role played by the strong pinching is then to circumvent this difficulty. In \cite{HB2} some interesting explicit solutions and evidences of the optimality of this result are given.\\

Now, if $a$ and $b$ are chosen so that $C(a, b)=1$ we get $(1+\epsilon)$-bunching and are able to apply a proof similar to the one given for Theorem \ref{theo:1/2}. A direct computation shows that $C(a, b) = 1$ is equivalent to $a+b=1/2$. The following remark then gives some useful informations.

\begin{Rem} 
Let us first remark (see \cite{HB2}, Remark 1.5) that, under the hypotheses given in Theorem \ref{theo:general}: $C(a,b)\geq 2a$ and $C(a,a)=2\sqrt{a}$. It is furthermore obvious to check that if $a=1/2$ then $b=0$ yields $(1+\epsilon)$-bunching which means nothing more than the sectional curvature is non positive, this is our second case. Also, if $a=1/4$ then $b=1/4$ which means that we have a strong quarter-pinching and not only a relative one; this is our first case. These are the only cases for which only one pinching condition is necessary. To go further, note that the bunching condition satisfies a monotonicity property; indeed, if $\alpha \geq  \beta$, then $\alpha$-bunched implies $\beta$-bunched. A straightforward consequence  is that the condition we really need in order to make the construction of the stable holonomy is that $C(a,b) \geq 1+\epsilon$, for some small $\epsilon$, which is equivalent, by the same direct computation to $a+b >1/2$. Consequently $a = 1/4 + \eta'$ and $b = 1/4 - \eta$ with $\eta ' > \eta$
makes possible the construction. Here $\eta'$ can takes all values in the interval $]0, 1/4[$. The two extreme cases are given by our case 1 and case 2 and this remark yields situations that interpolate between them. The price to pay for these new cases is to combine the two pinching conditions, strong and relative.
\end{Rem}
 
 \begin{Rem}In the proof of the above proposition, the following fact, which will be useful later, was applied several times.
 \begin{claim}\label{claim}
 For every $\epsilon>0$, and $d>0$, there exists $N$ such that for every 
 $\xi  \in \partial \tilde M$, every pair of points
 in a horosphere $H_\xi$ such that $d_{H_\xi}(x,y) \leq d$, then
 $$
 \| \Pi ^\xi (x,y) - \Pi ^\xi _N (x,y)\| \leq \epsilon,
 $$
 where
$\Pi ^\xi _N (x,y)$ is defined in (\ref{approx-N}) with $s=0$.
\end{claim}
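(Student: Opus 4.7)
The plan is to extract the claim directly from the telescoping construction used in the proof of Proposition \ref{parallel-transport}. Since $\Pi^\xi(x,y)$ was defined as the limit as $N\to\infty$ of $\Pi^\xi_N(x,y)$, and since the differences of consecutive terms in this sequence were already estimated in (\ref{summand-estimate}), the claim is essentially a uniform Cauchy estimate for the geometric series tail.

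More precisely, I would first fix $d>0$ and choose $t_0 = t_0(d)$ large enough so that, by assertion (3) of Lemma \ref{contraction-dilation}, any pair $x,y$ with $d_{H_\xi}(x,y) \leq d$ satisfies $d_{H_\xi(t_0)}(\varphi_{t_0}(x), \varphi_{t_0}(y)) < \rho$; concretely any $t_0 \geq \log(d/\rho)$ works, and this choice does not depend on $\xi$. Then for every $N \geq 0$ the telescoping identity (\ref{approx-N}) with $s=0$ gives
\begin{equation*}
\Pi^\xi(x,y) - \Pi^\xi_N(x,y) = \sum_{j=N}^{\infty} \bigl(\Pi^\xi_{0,j+1}(x,y) - \Pi^\xi_{0,j}(x,y)\bigr),
\end{equation*}
and by the per-summand bound (\ref{summand-estimate}) each term has norm at most $C e^{-\tau(t_0+j)} d_{h_0}(x,y) \leq C\, d\, e^{-\tau(t_0+j)}$.

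Summing the geometric series,
\begin{equation*}
\|\Pi^\xi(x,y) - \Pi^\xi_N(x,y)\| \;\leq\; C\,d\,e^{-\tau t_0}\sum_{j=N}^{\infty} e^{-\tau j} \;=\; \frac{C\,d\,e^{-\tau t_0}}{1-e^{-\tau}}\, e^{-\tau N},
\end{equation*}
so it is enough to take $N$ so that the right-hand side is below $\epsilon$, e.g.\ $N \geq \frac{1}{\tau}\log\!\bigl(\frac{C\,d\,e^{-\tau t_0}}{(1-e^{-\tau})\epsilon}\bigr)$. Crucially, this $N$ depends only on $\epsilon$, $d$, $\tau$, $\rho$ and the constant $C$ from Lemma \ref{square-estimate}; in particular, by the Remark following that lemma, $C$ does not depend on $\xi$, and neither do $t_0$ or the decay rate $\tau$ (which comes from the uniform pinching (\ref{pinching})). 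Hence the same $N$ works uniformly for all $\xi \in \partial \tilde M$ and all pairs $x,y$ with $d_{H_\xi}(x,y) \leq d$, which is exactly the content of the claim.

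The only point that needs a moment's care — and which I would flag as the main obstacle — is confirming the $\xi$-independence of the constants; but this is precisely what is asserted in the Remark immediately preceding the claim, together with the uniform lower bound $\rho$ on the injectivity radius from part (3) of Proposition \ref{one-all}. No further ingredients are needed.
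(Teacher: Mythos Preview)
Your argument is correct and is essentially identical to the paper's own proof: both use the telescoping identity (\ref{approx-N}) to write $\Pi^\xi - \Pi^\xi_N$ as the tail $\sum_{j\geq N}(\Pi^\xi_{j+1}-\Pi^\xi_j)$, apply the per-summand bound (\ref{summand-estimate}), and sum the resulting geometric series. Your treatment is in fact slightly more explicit than the paper's about the choice of $t_0=t_0(d)$ and the $\xi$-independence of the constants, but the underlying proof is the same.
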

Let us prove the claim. In the case of strong $1/4$-pinching, it follows by (\ref{approx-N}) 
$$
\Pi ^{\xi } (x,y) - \Pi ^{\xi}_N (x,y)= \sum _{j=N}^\infty \left(\Pi^\xi _{j+1} (x,y) - \Pi^\xi _{j} (x,y)\right).
$$
and by (\ref{summand-estimate}) we obtain
$$ 
 \| \Pi ^{\xi} (x,y) - \Pi ^{\xi}_N (x,y) \|  \leq  C \sum_{j=N}^\infty  e^{-\tau (t_0 +j)}d_{h_s}(x,y).
$$
This concludes the proof of the claim since the rest of the series satisfies
$$\sum_{j=N}^\infty  e^{-\tau (t_0 +j)}d_{h_s}(x,y) \leq d \sum_{j=N}^\infty  e^{-j \tau} 
\leq \epsilon$$ 
for $N$ large enough.
In the case of relative $1/2$-pinching, the proof is similar, replacing 
(\ref{summand-estimate}) by (\ref{phi-norm-square3}) in the last step.
\end{Rem}

We now wish to compare the stable holonomy with the parallel transport of the Levi-Civita connection
on horospheres. Consider two points $x,y$ on a horosphere $H_\xi$ in $\tilde M$ centered at $\xi \in \partial \tilde M$.
Assume that $d_{H_\xi} (x,y) <\rho$ is smaller than the injectivity radius of $H_\xi$. We recall that,
by Proposition \ref{one-all} (3), the injectivity radius of every horosphere is bounded below by a constant $\rho >0$.
 The stable holonomy
$\Pi ^\xi (x,y)$ and the parallel transport $P^\xi (x,y)$ along the unique geodesic segment joining $x$ and $y$ 
a priori do not coincide. We insist on the fact that the stable holonomy is a dynamical object whereas the Levi-Civita connection is geometric. Assuming that they coincide locally on a horosphere has the following strong implication.
 \begin{Pro}\label{levi-civita-flat}
Let $M$ be a closed Riemannian manifold with sectional curvature satisfying either the strong $1/4$-pinching or relative $1/2$-pinching assumption.
Let $\xi$ be a point in $\partial \tilde M$ and $x_0 \in H_\xi$ be a point in a horosphere centered at $\xi$. Assume that
for every $x,y \in B_{H_\xi} (x_0, \frac{\rho}{2})$, the stable holonomy $\Pi ^\xi (x,y)$ coincide with the parallel transport
$P^\xi (x,y)$ of the Levi-Civita connection of $H_\xi$. Then the induced metric on $H_\xi$ restricted to $B_{H_\xi} (x_0, \frac{\rho}{2})$ is flat.
\end{Pro}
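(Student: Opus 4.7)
My plan is to exploit the cocycle (``flatness'') property (2) of the stable holonomy in Definition~\ref{def-stable-holonomy2}, combined with the hypothesis $\Pi^\xi = P^\xi$ on the ball $B := B_{H_\xi}(x_0,\rho/2)$, to build a smooth parallel frame on $B$; once such a frame is in hand, the Riemann tensor of the induced metric $h_\xi$ must vanish on $B$, which is exactly flatness.

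First I would fix an arbitrary basis $(e_1,\dots,e_n)$ of $T_{x_0}H_\xi$ and set
\[
\tilde{e}_i(y) := \Pi^\xi(x_0,y)(e_i), \qquad y \in B.
\]
For any $y\in B$ we have $d_{H_\xi}(x_0,y) < \rho$, which by Proposition~\ref{one-all}~(3) is below the injectivity radius, so the unique minimizing geodesic from $x_0$ to $y$ varies smoothly with $y$ and hence $y\mapsto P^\xi(x_0,y)$ is a smooth map. The hypothesis identifies this map with $\Pi^\xi(x_0,\cdot)$ on $B$, so each $\tilde{e}_i$ is smooth on $B$ and $(\tilde{e}_1,\dots,\tilde{e}_n)$ is a smooth frame of $TH_\xi|_B$.

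The main step is to show that every $\tilde{e}_i$ is parallel with respect to the Levi-Civita connection $\nabla^\xi$ of $h_\xi$. Given $y_1\in B$ and $v\in T_{y_1}H_\xi$, the geodesic $\gamma(t)=\exp_{y_1}(tv)$ in $H_\xi$ stays inside $B$ for $t$ small, and for any such $t$ both $y_1$ and $y_2:=\gamma(t)$ lie in $B$. Applying the cocycle relation~(2) with $z = y_1$ and then the hypothesis $\Pi^\xi(y_1,y_2)=P^\xi(y_1,y_2)$ yields
\[
\tilde{e}_i(y_2) = \Pi^\xi(x_0,y_2)(e_i) = \Pi^\xi(y_1,y_2)\circ\Pi^\xi(x_0,y_1)(e_i) = P^\xi(y_1,y_2)\bigl(\tilde{e}_i(y_1)\bigr).
\]
Hence $t\mapsto\tilde{e}_i(\gamma(t))$ is the $\nabla^\xi$-parallel transport of $\tilde{e}_i(y_1)$ along $\gamma$; differentiating at $t=0$ gives $\nabla^\xi_v\tilde{e}_i(y_1)=0$. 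Since $y_1$ and $v$ are arbitrary, $\nabla^\xi\tilde{e}_i\equiv 0$ on $B$ for every $i$.

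Once a smooth parallel frame is in place, flatness follows by a standard computation: for any vector fields $X,Y$ on $B$,
\[
R^\xi(X,Y)\tilde{e}_i = \nabla^\xi_X\nabla^\xi_Y\tilde{e}_i - \nabla^\xi_Y\nabla^\xi_X\tilde{e}_i - \nabla^\xi_{[X,Y]}\tilde{e}_i = 0,
\]
and as $(\tilde{e}_i(y))$ is a basis at every point we conclude $R^\xi\equiv 0$ on $B$, that is $h_\xi|_B$ is flat. The step I expect to require the most care is the smoothness of $\tilde{e}_i$: Proposition~\ref{parallel-transport}~(ii) only gives Lipschitz regularity for $\Pi^\xi$, so without the coincidence hypothesis the frame would not obviously be differentiable. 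The hypothesis of the proposition precisely upgrades this regularity by identifying $\Pi^\xi(x_0,\cdot)$ with the Levi-Civita transport, which is as smooth as the metric.
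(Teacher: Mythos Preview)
Your proof is correct and follows essentially the same route as the paper: both use the cocycle property~(2) of the stable holonomy together with the coincidence hypothesis to conclude that the Levi-Civita parallel transport on $B$ satisfies $P^\xi(x,y)=P^\xi(z,y)\circ P^\xi(x,z)$, and then deduce flatness. The only difference is presentational: the paper simply observes this path-independence of $P^\xi$ and invokes the classical formula expressing curvature in terms of parallel transport (citing \cite[Theorem~7.1]{Lee}), whereas you unpack that step by explicitly building the parallel frame $\tilde e_i$ and computing $R^\xi(X,Y)\tilde e_i=0$.
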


\begin{proof}
Since any pair of points in $B_{H_\xi} (x_0, \frac{\rho}{2})$ are at distance less than $\rho$, there is a unique
geodesic segment joining them and by our coincidence assumption and assertion (2) of \ref{def-stable-holonomy2} it follows that
$$
P^\xi (x,y) = P^\xi (z,y) \circ P^\xi (x,z). 
$$ 
From the classical formula of the curvature in terms of the parallel transport, see for instance \cite[Theorem 7.1]{Lee}, we deduce that the curvature of the induced metric of $H_\xi$ restricted to $B_{H_\xi} (x_0, \frac{\rho}{2})$ is identically zero.

\end{proof}
The goal of what follows is to show that if the stable holonomy and the parallel transport of the Levi-Civita
connection locally coincide on a given horosphere $H_\xi$, then the same property holds on all horospheres. To accomplish this, we need to establish the continuity of the stable holonomy. Let $\tilde v$ be a unit vector tangent to $\tilde M$ and 
$\tilde {v}_k \in T^1 \tilde M$ a sequence of unit tangent vectors such that $\lim_k \tilde v _k = \tilde v$. Let
$\xi _{\tilde v} = c_{\tilde v} (+\infty)$ the associated point on $\partial \tilde M$. Denote by $H_{\tilde v}$ be 
the horosphere centered at $\xi _{\tilde v}$ passing through the base point of $\tilde v$.
Let $\tilde Q_k$ and $\tilde Q$ the lifts of the plaques 
$Q_k$ and $Q$ of the strong stable foliation $W^{ss}$ embedded in a chart $U \subset T^1 M$ and containing 
$\tilde v_k$ and $\tilde v$ respectively. Recall that, from Proposition \ref{one-all}, the sequence of diffeomorphisms
\begin{equation}\label{sequence}
\pi ^{-1} \circ p\circ \Theta (v_k) : D^n \to \tilde p (\tilde{Q}_k)
\end{equation}
converges in the $C^r$-topology to 
\begin{equation}\label{limit}
\pi ^{-1} \circ p\circ \Theta (v) : D^n \to \tilde p (\tilde{Q}).
\end{equation}
\begin{Pro}\label{stableholonomy-continuous}
Let $\tilde {v}_k \in T^1 \tilde M$ be a sequence of unit tangent vectors such that $\lim_k \tilde v _k = \tilde v$.

Let $x= \pi ^{-1} \circ p\circ \Theta (v) (q_x)$, $y= 
\pi ^{-1} \circ p\circ \Theta (v) \mathcal (q_y)$ 
be a pair of point in $\tilde p(\tilde{Q})$ and $x_k = \pi ^{-1} \circ p\circ \Theta (v_k) (q_{x_k})$, 
$y_k = \pi ^{-1} \circ p\circ \Theta (v_k) (q_{y_k})$ in $\tilde p (\tilde{Q}_k) $. 
Then 
$$
\lim _k \Pi^{\xi _{\tilde v_k}} (x_k, y_k) = \Pi ^{\xi _v} (x,y).
$$
\end{Pro}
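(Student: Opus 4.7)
The plan is to combine the uniform tail estimate for the defining series of $\Pi^\xi$ (the Claim following Proposition \ref{parallel-transport}) with the continuity of the finite approximations $\Pi^\xi_N$, via a standard $3\varepsilon$ argument. Fix a bound $d>0$ such that $d_{H_{\tilde v_k}}(x_k,y_k)\le d$ and $d_{H_{\tilde v}}(x,y)\le d$ for all $k$ large enough; such a bound exists because the embeddings $\pi^{-1}\circ p\circ \Theta(v_k)$ converge to $\pi^{-1}\circ p\circ \Theta(v)$ uniformly on $D^n$ by Proposition \ref{one-all}.

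First I would invoke the Claim: for any $\varepsilon>0$ there exists an integer $N$, depending only on $\varepsilon$ and $d$ and not on $\xi$, such that
\[
\|\Pi^{\xi_{\tilde v_k}}(x_k,y_k)-\Pi^{\xi_{\tilde v_k}}_N(x_k,y_k)\|\le \varepsilon
\qquad\text{and}\qquad
\|\Pi^{\xi_{\tilde v}}(x,y)-\Pi^{\xi_{\tilde v}}_N(x,y)\|\le \varepsilon
\]
for every $k$. This reduces the continuity question to showing that the finite-step operator
\[
\Pi^{\xi_{\tilde v_k}}_N(x_k,y_k)=D\varphi_{t_0+N,\xi_{\tilde v_k}}^{-1}(\varphi_{t_0+N,\xi_{\tilde v_k}}(y_k))\circ P^{\xi_{\tilde v_k}}_{t_0+N}(\varphi_{t_0+N,\xi_{\tilde v_k}}(x_k),\varphi_{t_0+N,\xi_{\tilde v_k}}(y_k))\circ D\varphi_{t_0+N,\xi_{\tilde v_k}}(x_k)
\]
converges to $\Pi^{\xi_{\tilde v}}_N(x,y)$ as $k\to\infty$.

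For this middle term, I would use the two main continuity facts already available. The flow $\varphi_{t,\xi}$ is defined via $\tilde p_\xi\circ \tilde g_t\circ \tilde p_\xi^{-1}$, and since Busemann functions depend smoothly on $\xi$ (Subsection \ref{buseman-smooth}) and the geodesic flow $\tilde g_t$ is smooth, the map $(\xi,z,t)\mapsto \varphi_{t,\xi}(z)$ together with its derivatives in $z$ depend continuously on $\xi$. Hence $\varphi_{t_0+N,\xi_{\tilde v_k}}(x_k)\to \varphi_{t_0+N,\xi_{\tilde v}}(x)$, similarly for $y_k$, and the differentials converge as well. For the parallel transport $P^{\xi_{\tilde v_k}}_{t_0+N}$ on the image horospheres, Proposition \ref{one-all}(1) and (4) guarantee the $C^r$-convergence of the horospheres $H_{\xi_{\tilde v_k}}(t_0+N)$ to $H_{\xi_{\tilde v}}(t_0+N)$ on a neighborhood of uniform size (Remark \ref{uniform-convergence}), and hence the convergence of the Levi-Civita parallel transport between the converging endpoints, provided these endpoints are within the uniform lower bound $\rho$ of the injectivity radius. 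The contraction estimate (3) of Lemma \ref{contraction-dilation} ensures that after flowing for time $t_0+N$ the distances do lie well below $\rho$, so the parallel transports $P^{\xi_{\tilde v_k}}_{t_0+N}(\varphi(x_k),\varphi(y_k))$ are well defined and converge to $P^{\xi_{\tilde v}}_{t_0+N}(\varphi(x),\varphi(y))$.

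Putting the three ingredients together through the triangle inequality
\[
\|\Pi^{\xi_{\tilde v_k}}(x_k,y_k)-\Pi^{\xi_{\tilde v}}(x,y)\|
\le
\|\Pi^{\xi_{\tilde v_k}}(x_k,y_k)-\Pi^{\xi_{\tilde v_k}}_N(x_k,y_k)\|
+\|\Pi^{\xi_{\tilde v_k}}_N(x_k,y_k)-\Pi^{\xi_{\tilde v}}_N(x,y)\|
+\|\Pi^{\xi_{\tilde v}}_N(x,y)-\Pi^{\xi_{\tilde v}}(x,y)\|
\]
bounds the left-hand side by $3\varepsilon$ for $k$ large, completing the proof. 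The main obstacle, and the reason the argument requires the Claim, is that $\Pi^\xi$ is only defined as a limit and one cannot hope for continuity of the limit directly from the limiting horospheres; one must separate the uniform tail (independent of $\xi$) from the finite head (continuous in $\xi$). The uniformity in $\xi$ supplied by the Claim, which in turn relies on the uniform $\xi$-independence of the constant $C$ in Lemma \ref{square-estimate} noted in the Remark, is therefore the crucial point of the argument.
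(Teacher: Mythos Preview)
Your proof is correct and follows essentially the same $3\varepsilon$ strategy as the paper: invoke the Claim for the uniform tail bound, establish convergence of the finite-step approximation $\Pi_N^{\xi}$ via continuity of the flows $\varphi_{t,\xi}$ and of the Levi--Civita parallel transport (Proposition~\ref{one-all}(1),(4)), and conclude by the triangle inequality. One small caveat: Subsection~\ref{buseman-smooth} only gives smoothness of $B(\cdot,\xi)$ for fixed $\xi$, not joint smoothness in $\xi$; the convergence of the flows you need is better justified, as the paper does, by the $C^r$-convergence of the horospheres and their unit normals from Proposition~\ref{one-all}.
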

\begin{proof}
Let us fix $\epsilon >0$. 
By the claim \ref{claim}, we can choose $N$
such that for every $x,y \in \tilde p(\tilde{Q})$
and every $x_k, y_k \in \tilde p (\tilde{Q}_k) $, we have
\begin{equation}\label{approx-N-x-y}
|| \Pi ^{\xi _{\tilde v}} (x,y) - \Pi ^{\xi _{\tilde v}}_N (x,y)|| \leq \epsilon
\end{equation}
and similarly,
\begin{equation}\label{approx-N-x-y-k}
|| \Pi ^{\xi _{\tilde v_k}} (x_k,y_k) - \Pi ^{\xi _{\tilde v_k}}_N (x_k,y_k)|| \leq \epsilon.
\end{equation}
By the above convergence of (\ref{sequence} ) to (\ref{limit}), the points $x_k$ and $y_k$ converge 
to $x$ and $y$ and the unit normals to $\tilde p (\tilde Q _k)$ at $x_k$ and $y_k$ converge to the 
unit normals to $\tilde p (\tilde Q)$ at $x$ and $y$, respectively.
Therefore the flows $(\varphi ^{\xi _{\tilde v_k}}_{t})_{| \tilde p (\tilde Q _k)}$ converge to 
$(\varphi ^{\xi _{\tilde v}}_t)_{| \tilde p (\tilde Q)}$ uniformly for $t\in [0, T]$ for every $T$. Now, the way $\Pi ^{\xi _{\tilde v}}_N (x,y)$ depends on
$\varphi ^{\xi _{\tilde v}}_t$, $t\leq N$ and the fact that $t_0 \leq \log \rho$ implies that 
$\Pi ^{\xi _{\tilde v_k}} _N (x_k,y_k) $ converges
to $\Pi ^{\xi _{\tilde v}}_N (x,y)$. Therefore, there exists $K>0$ such that for all $k\geq K$, 
$$
\| \Pi ^{\xi _{\tilde v}}_N (x,y) - \Pi ^{\xi _{\tilde v_k}} _N (x_k,y_k) \| \leq \epsilon.
$$
We then deduce that for $N$ and $k \geq K$, 
\begin{eqnarray*}
&\| \Pi ^{\xi _{\tilde v}} (x,y) - \Pi ^{\xi _{\tilde v_k}}  (x_k,y_k) \| \leq \\
&\| \Pi ^{\xi _{\tilde v}} (x,y) - \Pi ^{\xi _{\tilde v}}_N (x,y)\| +
\|\Pi ^{\xi _{\tilde v}}_N (x,y) - \Pi ^{\xi _{\tilde v_k}} _N (x_k,y_k) \| +
\| \Pi ^{\xi _{\tilde v_k}} _N (x_k,y_k) - \Pi ^{\xi _{\tilde v_k}}  (x_k,y_k) \|
\end{eqnarray*}
thus,
$\| \Pi ^{\xi _{\tilde v}} (x,y) - \Pi ^{\xi _{\tilde v_k}}  (x_k,y_k) \| \leq 3\epsilon$,
which concludes the proof.

\end{proof}

\bigskip
We can now state the main result of this section.
\begin{Pro}\label{local-global}
Let $M$ be a closed Riemannian manifold with sectional curvature satisfying either the strong $1/4$-pinching or the $1/2$-relative pinching assumption.
Let $\tilde v$ be a unit tangent vector in $T^1 \tilde M$ and $\xi _{\tilde v} = c_{\tilde v} (+\infty)$ the corresponding point in $\partial \tilde M$. Assume that the stable holonomy
$\Pi ^{\xi _{\tilde v}} (x,y)$ and the parallel transport for the Levi-Civita connection $P^{\xi _{\tilde v}} (x,y)$ coincide 
on every ball of radius $\rho /2$ of the horosphere $H_{\xi _{\tilde v}}$. Then for every horosphere $H_{\xi _{\tilde w}}$, $\tilde w \in T^1 \tilde M$, and every $z\in H_{\xi _{\tilde w}}$, there exists  a neighbourhood $\mathcal V (z) \subset H_{\xi _{\tilde w}}$
of $z$ such that the stable holonomy $\Pi ^{\xi _{\tilde v}} (x,y)$ and the parallel transport for the Levi-Civita connection $P^{\xi _{\tilde v}} (x,y)$ coincide for all points $x,y \in \mathcal V (z)$.
\end{Pro}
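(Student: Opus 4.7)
The plan is to use the $\pi_1(M)$-equivariance of the two transports, the density of strong stable leaves in $T^1M$, and the continuity statements from Propositions \ref{one-all}(4) and \ref{stableholonomy-continuous}, in order to spread the hypothesis from the single horosphere $H_{\xi_{\tilde v}}$ to a neighborhood of every point on every other horosphere.

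First I would replace $\tilde w$ by a better unit vector. Given $z\in H_{\xi_{\tilde w}}$, let $\tilde w_z \in \tilde W^{ss}(\tilde w)$ be the unit vector at $z$ pointing toward $\xi_{\tilde w}$, so that $H_{\xi_{\tilde w_z}} = H_{\xi_{\tilde w}}$ and $\tilde p(\tilde w_z)=z$. Let $v$ and $w_z$ denote the projections to $T^1M$. By Anosov's theorem (\cite{An}), $W^{ss}(v)$ is dense in $T^1M$, so one can select a sequence $v_l \in W^{ss}(v)$ with $v_l \to w_z$, chosen inside a small foliation chart so that the plaques $Q_l \subset W^{ss}(v)$ through $v_l$ converge to the plaque $Q$ through $w_z$. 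Lifting to $T^1\tilde M$ near $\tilde w_z$ yields $\tilde v_l \to \tilde w_z$; since $\tilde v_l$ lies in some leaf $\gamma_l \tilde W^{ss}(\tilde v)$, one can write $\tilde v_l = \gamma_l \tilde v'_l$ with $\tilde v'_l \in \tilde W^{ss}(\tilde v)$ and $\gamma_l \in \pi_1(M)$. Then $H_{\xi_{\tilde v_l}} = \gamma_l H_{\xi_{\tilde v}}$, and since $\gamma_l$ acts on $\tilde M$ by isometries, Proposition \ref{parallel-transport}(iv) together with the obvious equivariance of the Levi-Civita transport shows that the hypothesis transfers to every $H_{\xi_{\tilde v_l}}$: we have $\Pi^{\xi_{\tilde v_l}} = P^{\xi_{\tilde v_l}}$ on every ball of radius $\rho/2$.

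Next, I would choose a neighborhood $\mathcal V(z) \subset H_{\xi_{\tilde w}}$ of $z$ of diameter less than $\rho/4$. Given $x,y \in \mathcal V(z)$, using the parametrized convergence of horospheres from Proposition \ref{one-all}(1) (via the diffeomorphisms $\alpha_l \to \alpha$ of equations~(\ref{diffeo-tilde-horo}) and~(\ref{diffeo-tilde-horo-lim})), I produce $x_l, y_l \in H_{\xi_{\tilde v_l}}$ with $x_l \to x$ and $y_l \to y$. For $l$ large enough, $x_l$ and $y_l$ lie in a ball of radius $\rho/2$ of $H_{\xi_{\tilde v_l}}$ (their limit lies in $\mathcal V(z)$ and $\tilde p(\tilde v_l) \to z$), so $\Pi^{\xi_{\tilde v_l}}(x_l,y_l) = P^{\xi_{\tilde v_l}}(x_l,y_l)$. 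Passing to the limit, Proposition \ref{stableholonomy-continuous} gives convergence of the left-hand side to $\Pi^{\xi_{\tilde w}}(x,y)$, while Proposition \ref{one-all}(4) gives convergence of the right-hand side to $P^{\xi_{\tilde w}}(x,y)$; this yields $\Pi^{\xi_{\tilde w}}(x,y) = P^{\xi_{\tilde w}}(x,y)$, as required.

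The main obstacle I anticipate is coherently matching the inputs to the two continuity statements: Proposition \ref{stableholonomy-continuous} requires the approximating points to arise from the canonical plaque parametrizations $\pi^{-1}\circ p \circ \Theta(v_l)$, while Proposition \ref{one-all}(4) demands convergence of the horospheres together with the chosen points. Both requirements are met by the single family of diffeomorphisms $\alpha_l$ coming from $\Theta$, but one must verify that the distance bound $d_{H_{\xi_{\tilde w}}}(x,y) < \rho$ is preserved under approximation and that $x_l, y_l$ stay within a ball of radius $\rho/2$ of $H_{\xi_{\tilde v_l}}$; both reduce to the uniform injectivity-radius lower bound of Proposition \ref{one-all}(3).
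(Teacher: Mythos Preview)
Your proposal is correct and follows essentially the same approach as the paper: density of $W^{ss}(v)$ in $T^1M$ to approximate any point on any horosphere by plaques that are $\pi_1(M)$-translates of pieces of $H_{\xi_{\tilde v}}$, then $\pi_1(M)$-equivariance of both transports to propagate the hypothesis, and finally the continuity results of Propositions~\ref{one-all}(4) and~\ref{stableholonomy-continuous} to pass to the limit. Your write-up is in fact more explicit than the paper's (which simply cites these ingredients), and your anticipated ``obstacle'' is a non-issue precisely for the reason you give.
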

\begin{proof}
 Suppose that $H_{\tilde v}$ satisfies the assumption of the proposition and let us consider a different horosphere $H_{\tilde w}$. We will prove that locally around $\tilde p (\tilde w)$ on $H_{\tilde w}$, the stable holonomy and the Levi-Civita parallel transport coincide. As mentioned
 in the proof of assertion (2) in Proposition \ref{one-all}, each leaf of
the strong stable foliation $W^{ss}\subset T^1 M$, in particular $W^{ss}(v)$, is dense in $T^1M$, where $v = d\tilde \pi (\tilde v)$.
Moreover, thanks to (\ref{diffeo-tilde-horo}) and (\ref{diffeo-tilde-horo-lim}) in Proposition \ref{one-all},
the lift $\tilde p (\tilde Q) \subset H_{\tilde w}$ 
is the $C^r$ limit
of the sequence of sets $\tilde p (\tilde Q_l)$ where $\tilde Q_l$ are lifts of $Q_l$. These lifts $\tilde Q_l$ 
are subsets of translates, by elements of the fundamental group of $M$, of $H_{\tilde v}$. By the $\pi_1 (M)$-equivariance 
of the stable holonomy (coming from Proposition \ref{parallel-transport}) and of the Levi-Civita 
connection, we get from our assumption that the stable holonomy and the parallel transport of the Levi-Civita connection
coincide on $\tilde p (\tilde Q _l)$. The proof then follows from the continuity properties of Proposition \ref{stableholonomy-continuous}
and Proposition \ref{one-all} (4).

\end{proof}
\begin{Cor}\label{Cor}
Let $M$ be a closed Riemannian manifold with sectional curvature satisfying either the strong $1/4$-pinching or relative $1/2$-pinching assumption. 
If the stable holonomy and the parallel transport of the induced Levi-Civita connection coincide 
on every ball of radius $\rho /2$ of one horosphere $H_{\xi _{\tilde v}}$, then the induced metric on
each horosphere of $\tilde M$ is isometric to a Euclidean metric. Moreover, for every $\tilde w \in T^1 \tilde M$, $x,y \in H_{\xi _{\tilde w}}$ 
we have $\Pi ^{\xi _{\tilde w}}_s (x,y) = P ^{\xi _{\tilde w}}_s (x,y)$, in other words, the stable holonomy and the parallel
transport associated to the Euclidean metric coincide on every horosphere. In particular, the parallel transport associated to 
the Euclidean metric is invariant by the geodesic flow.
\end{Cor}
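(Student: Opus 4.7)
My plan assembles three ingredients established earlier in the paper: Proposition \ref{local-global}, Proposition \ref{levi-civita-flat}, and the axioms of the stable holonomy from Definition \ref{def-stable-holonomy2}.

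First, I would propagate the hypothesis from the single horosphere $H_{\xi _{\tilde v}}$ to every horosphere via Proposition \ref{local-global}: around each point $z$ of any horosphere $H_{\xi _{\tilde w}}$ there is a neighbourhood on which $\Pi ^{\xi _{\tilde w}}$ and $P^{\xi _{\tilde w}}$ coincide. Shrinking if necessary so that it sits inside a ball of radius $\rho/2$, Proposition \ref{levi-civita-flat} tells us that the induced metric on $H_{\xi _{\tilde w}}$ is flat on this ball. Since the curvature is a smooth tensor field that vanishes on an open cover of $H_{\xi _{\tilde w}}$, the entire horosphere is flat; being simply connected (diffeomorphic to $\mathbb{R}^n$) and complete, it is globally isometric to Euclidean $\mathbb{R}^n$.

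Second, I would upgrade the local equality $\Pi = P$ on each horosphere to a global one. Now that $H_{\xi _{\tilde w}}$ is flat and simply connected, $P^{\xi _{\tilde w}}(x,y)$ is path-independent and defined between any two points. Both $\Pi ^{\xi _{\tilde w}}$ and $P^{\xi _{\tilde w}}$ satisfy the cocycle/composition rule -- axiom (2) of Definition \ref{def-stable-holonomy2} for the former, and the standard property of parallel transport on a flat simply connected manifold for the latter. Connecting $x$ to $y$ by a finite chain of points $x=z_0, z_1, \ldots, z_k=y$ with each consecutive pair lying in a common small neighbourhood on which local equality already holds, telescoping both identities yields $\Pi ^{\xi _{\tilde w}}(x,y) = P^{\xi _{\tilde w}}(x,y)$ globally. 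Third, the geodesic flow invariance of the Euclidean parallel transport follows at once from axiom (3) of the stable holonomy: substituting $P$ for $\Pi$ on both sides of
\[
\Pi ^\xi _s(x,y) = D\varphi ^{-1} _{t,\xi} (\varphi _{t,\xi}(y)) \circ \Pi ^\xi _{s+t}(\varphi _{t,\xi}(x), \varphi _{t,\xi}(y)) \circ D\varphi _{t,\xi}(x)
\]
gives precisely the intertwining of $P^\xi _s$ and $P^\xi _{s+t}$ by $\varphi _{t,\xi}$, i.e.\ the invariance of Euclidean parallel transport under the flow permuting the horospheres centred at $\xi$.

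The main obstacle I anticipate is in the first step, specifically ensuring that the neighbourhoods produced by Proposition \ref{local-global} can be arranged to sit inside balls of radius at most $\rho/2$ so that Proposition \ref{levi-civita-flat} applies cleanly. This is essentially a bookkeeping matter handled by the uniform injectivity radius lower bound $\rho$ from Proposition \ref{one-all}(3) together with the continuity of the stable holonomy (Proposition \ref{stableholonomy-continuous}); once flatness of every horosphere is secured, the remaining two steps are formal consequences of the axioms defining $\Pi$.
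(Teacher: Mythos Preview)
Your proposal is correct and follows essentially the same route as the paper's proof: invoke Proposition~\ref{local-global} to obtain local coincidence of $\Pi$ and $P$ on every horosphere, apply Proposition~\ref{levi-civita-flat} on these neighbourhoods to deduce flatness (hence Euclidean), and then use the composition rule for both $\Pi$ (axiom~(2)) and $P$ (flatness) along a finite chain covering a path from $x$ to $y$ to globalize the equality. Your additional remark deriving the flow-invariance from axiom~(3), and your flagged bookkeeping concern about fitting the neighbourhoods inside $\rho/2$-balls, are both handled exactly as you indicate---the paper treats the latter informally as well.
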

\begin{proof}
By the Proposition \ref{local-global}, for every horosphere $H_{\xi _{\tilde w}}$ and $x\in H_{\xi _{\tilde w}}$, the stable holonomy and the parallel transport associated to the Levi-Civita connexion
coincide on a neighbouhood $\mathcal V (x)$ of $x$. Thanks to the proposition \ref{levi-civita-flat} applied to 
$\mathcal V (x)$, we deduce that the induced metric on every horosphere has a flat Levi-Civita connexion, hence
is a Euclidean metric. This proves the first part. Let us prove the second part of the Corollary. Let us consider 
$x, y \in H_{\xi _{\tilde w}}$. Choose a continuous path $c : [0, 1 ] \to H_{\xi _{\tilde w}}$ such that $c(0)= x$ and $c(1)=y$.
There exists $t_0 = 0 < t_1 < ... < t_ {2k} = 1$ such that $\{\mathcal V (c(t_{2i}))\}_{i=0}^k$ is a finite covering of $c([0,1])$ and 
$c(t_{2i+1}) \in \mathcal V (c(t_{2i})) \cap \mathcal V (c(t_{2(i+1)}))$. Since the Levi-Civita connexion on the metric of
$H_{\xi _{\tilde w}}$ is flat we have, 
$$
P ^{\xi _{\tilde w}}_s (x,y) = 
P ^{\xi _{\tilde w}}_s (c(t_0), c(t_1)) \circ P ^{\xi _{\tilde w}}_s (c(t_1,c(t_2)) \circ ..... \circ P ^{\xi _{\tilde w}}_s (c(t_{2k-1}, c(t_{2k})) 
$$
and similarly, thanks to the property (2) of the definition \ref{def-stable-holonomy2},
$$
\Pi ^{\xi _{\tilde w}}_s (x,y) = 
\Pi ^{\xi _{\tilde w}}_s (c(t_0), c(t_1)) \circ \Pi ^{\xi _{\tilde w}}_s (c(t_1,c(t_2)) \circ ..... \circ \Pi ^{\xi _{\tilde w}}_s (c(t_{2k-1}, c(t_{2k})). 
$$
We then conclude that $P ^{\xi _{\tilde w}}_s (x, y) = \Pi ^{\xi _{\tilde w}}_s (x, y)$ since $P ^{\xi _{\tilde w}}_s (c(t_j), c(t_{j+1})) = \Pi ^{\xi _{\tilde w}}_s (c(t_j), c(t_{j+1})) $.

\end{proof}

\section{A quasi-isometry between $\tilde M$ and a Heintze group}\label{quasi}

In this section, the main theorem of this article, Theorem~\ref{main-thm}, will be proved. 
As we explained in the introduction, the proof amounts to proving Theorem~\ref{eigenvalues}.
Henceforth, $M$ is assumed to satisfy either the strong $1/4$-pinching 
or relative $1/2$-pinching assumption and to be of dimension greater than or equal to $3$. Furthermore, by Corollary 2.27 we may assume that all the horospheres in $\tilde M$ are isometric to the Euclidean space and that the associated parallel transport is invariant 
by the geodesic flow. We will therefore be able to prove below the following. Given a 
geodesic $c_{\tilde v}(t)$ in $\tilde M$ which projects to a closed geodesic in $M$, there exists a quasi-isometry between 
the universal cover $\tilde M$ of $M$ and the Heintze group $G_A$, where $A$ is the derivative of the first return Poincar\'e map along the closed geodesic. Theorem \ref{G_A-hyperbolic}, will then imply that the eigenvalues of $A$ all have the same modulus,
hence concluding the proof of Theorem \ref{eigenvalues}.

\smallskip

Let us choose a geodesic $c_{\tilde v}(t)$ in $\tilde M$ with end point $\xi = c_{\tilde v} (\infty) \in \partial \tilde M$,  which projects to a closed geodesic in $M$. We consider the horosphere $H_\xi (0)$ centred at $\xi$ and passing through the base point  $x_0= c_{\tilde v}(0)$. For each $p \in \tilde M$,  the geodesic $c$ joining $p$ and $\xi$ intersects $H_\xi (0)$ at a point $x=c(0)$. The pair,
$(t,x)\in \mathbb R \times H_\xi (0)$, are the {\sl horospherical coordinates} of $p$. 

\smallskip

Keeping the same notation as in Section~\ref{connection}, we recall that $\{ \varphi _t\}_{t\in \mathbb{R}}$  is a one parameter group of diffeomorphisms of $\tilde M$ which sends  $H_\xi (0)$ diffeomorphically onto  $H_\xi (t)$ (see \ref{def-phi_t}) and the above horospherical coordinates realise the following diffeomorphism $\Phi : \mathbb R \times H_\xi (0) \rightarrow \tilde M$ defined by
\begin{equation}
\label{eq:preppullback}
(t,x)\rightarrow\varphi_t(x), \text{ for } t\in \mathbb{R} \text{ and } x\in H_\xi (0).
\end{equation}
Therefore, in horospherical coordinates, the pulled back by $\Phi$ of the metric $\tilde g$ on $\tilde M$ at $(t,x)$ writes as the orthogonal sum:
\begin{equation}
\label{metric-horos-coord}
\Phi ^* (\tilde g) = dt^2 + \varphi _t ^* h_t(x),
\end{equation}
where $\varphi _t ^* h_t$ is a flat metric on $H_\xi (0)$. Note that $\varphi_t$ acts by translation on geodesics, hence, there is no effect on the $dt^2$ factor.

\medskip

As before, since the horosphere $(H_\xi (0),h_0)$ is flat  we will identify it with the
 Euclidean space $(\mathbb R ^n, h_{\mathrm{eucl}})$. The geodesic $c_{\tilde v}$ projects to a closed geodesic on $M$ of period $l$.
 Let $\gamma$ be the element of the fundamental group of $M$ with axis $c_{\tilde v}$ such that 
 $D\gamma (\tilde g _l (\tilde v)) = \tilde v$. 
 The map $\psi = \gamma  \circ \varphi _l$ is a diffeomorphism of $\tilde M$, (see definition \ref{psi} below).
When restricted to $H_\xi (0)$, $\psi$ can be considered as a diffeomorphism of $\mathbb R^n$ 
fixing $x_0$, and $d\psi (x_0)$ as a 
linear operator of $\mathbb R^n$ which we will denote by $T$, see the definitions \ref{psi} and \ref{Tk} below,
where $T= T^1$. Up to replacing $T$ by $T^2$,
we can assume that $T$ is contained  in a one parameter group
in $GL(n,\mathbb R)$, i.e. $T=e^{lA}$ for some matrix $A$ (see \cite{cul}). 
Indeed, replacing $T$ with $T^2 = D\psi (x_0) ^2$, we simply work with twice the periodic orbit of period $2l$
and the argument is rigorously the same. We thus can assume from now on that $T= e^{lA}$.
Let us consider the Heintze group $G_A$ associated to the matrix $A$ and 
recall from Section~\ref{horospheres} that 

$G_A = \mathbb R \ltimes _A \mathbb R^n$ is the solvable group 
endowed with the multiplication law
\begin{equation}\label{group-law}
(s,x) \cdot (t,y) = (s+t, x+ e^{-sA} y), \text{ for all } s,t \in \mathbb R, x,y\in\mathbb{R}^n.
\end{equation}

\smallskip

The group $G_A$ is diffeomorphic to $\mathbb R \times \mathbb R^n$, and the tangent
space at each point $(s,x)$ of $G_A$ splits as $\mathbb R \times \mathbb R^n$.
Let us consider the left invariant metric $g_A$ on $G_A$ which is defined to be the standard Euclidean metric
at $(0,0) \in G_A$, where $\mathbb R \times \{ 0 \}$ is orthogonal to $\{0\} \times \mathbb R^n$.
Since the inverse of the left multiplication is given by  $L_{(s,x)^{-1}}(t, y)=(t-s, -e^{sA} x +e^{sA} y)$, an easy computation shows that the metric $g_A$ is then defined, for a vector $Z=(a,X)$ which is tangent to $G_A$
at an arbitrary point $(s,x) \in G_A$, by
\begin{equation}\label{g_T}
g_A (s,x)(Z,Z) = a ^2 + h_{\mathrm{eucl}} (e^{sA} X, e^{sA} X).
\end{equation}

We start by identifying the flat horosphere ($H_\xi (0), h_0)$ with the Euclidean space $(\mathbb R ^n, d_{\mathrm{eucl}})$. 
Let us recall that $c_{\tilde v}$ is a geodesic in $\tilde M$ with $\xi = c_{\tilde v} (\infty) \in \partial \tilde M$,  which projects to a closed geodesic in $M$ of period $l$. 
We do not require that this geodesic is primitive; in fact, we 
will later replace the corresponding element $\gamma$ of the fundamental group by a large enough power of it.

We now consider the diffeomorphism of  $H_\xi (0)$ defined by
\begin{equation}
\label{psi}
\psi (x) =  \gamma \circ \varphi _l (x),\ \text{for } x\in H_\xi (0).
\end{equation}

For all $k\geq 1$, let  $\psi ^k = \psi \circ \psi \dots \circ \psi $ denote the $k$-th power of $\psi$.
For 
$x\in H_\xi (0)$, we define 
\begin{equation}\label{tk}
T_k(x) = d\psi   \left(\psi ^{k-1} (x)\right),
\end{equation}
and 
\begin{equation}\label{Tk}
T^k(x) = T_k(x) \cdot T_{k-1}(x) \cdots T_1(x).
\end{equation}

\begin{figure}[htbp]
\begin{center}
\scalebox{.50}{ \input{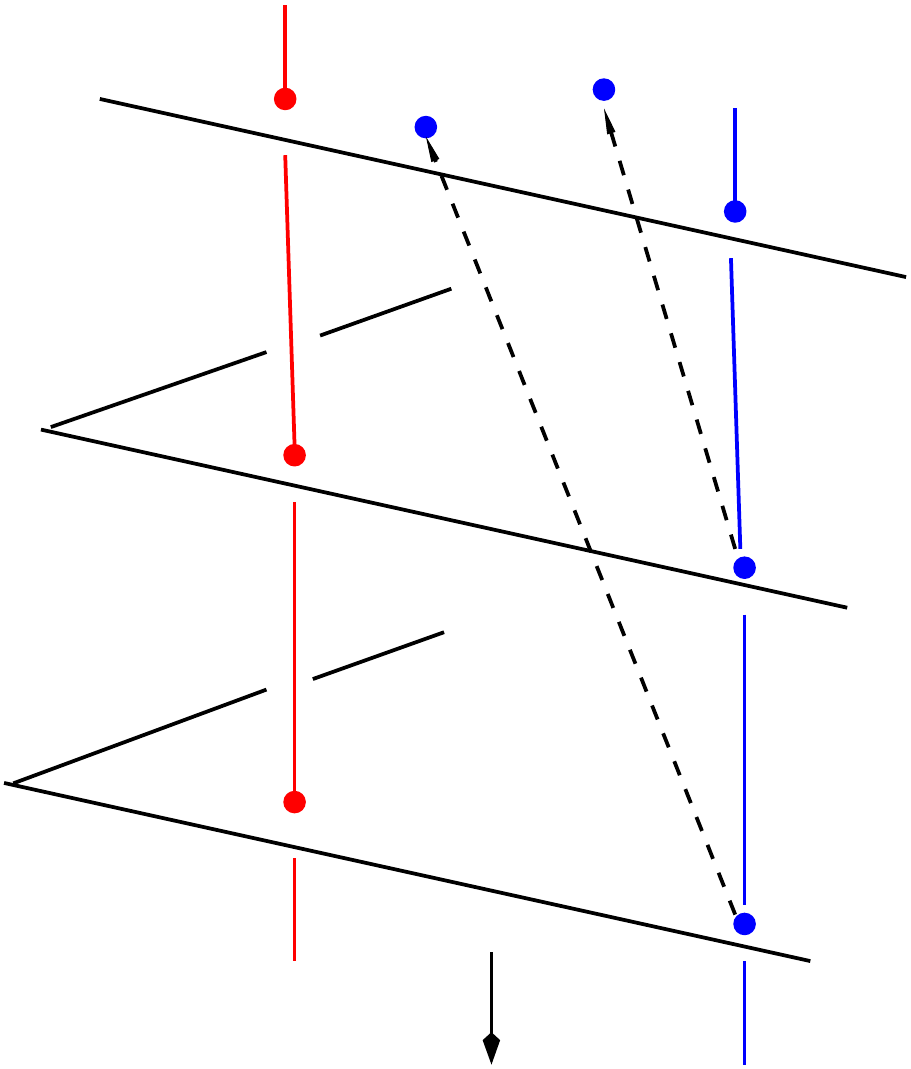_t}}
 \caption{The action of $\psi$ on horospheres.}
\label{figure:quad}
\end{center}
\end{figure}

Since $\gamma$ and $\varphi_t$ commute for all $t\in\mathbb{R}$, it follows  that 
\begin{equation}
\label{eq:T^k}
\psi ^k(x)=\gamma^{k}\circ \varphi_{kl}(x)\text{ and }  T^k(x) = D\psi ^k (x) = D\gamma ^{k} \circ D\varphi _{kl} (x).
\end{equation}

As explained at the beginning of the section, we recall that $T^1 (x_0) = e^{A}$ for $A$ being a $(n\times n)$-matrix. In particular, 
\begin{equation}\label{formula}
T^k(x_0) = D\psi ^k (x_0) = D\gamma ^{k} \circ D\varphi _{kl} (x_0) = e^{lkA}.
\end{equation}

The main result of this section is the following: 
\begin{Thm}
\label{quasi-isom}
With the notation above, $(\tilde M , \tilde g)$ is bi-Lipschitz diffeomorphic, hence quasi-isometric, to $(G_A , g_A)$.
\end{Thm}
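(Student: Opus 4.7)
\medskip

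\noindent\textbf{Proof plan.} The plan is to define an explicit diffeomorphism $F\colon G_A\to\tilde M$ and show that the pullback $F^*\tilde g$ is uniformly equivalent to $g_A$. Concretely, fix an affine isometric identification $\iota_0\colon (\mathbb R^n,h_{\mathrm{eucl}})\to (H_\xi(0),h_0)$ with $\iota_0(0)=x_0$ (available by Corollary~\ref{Cor}, which guarantees that every horosphere is Euclidean), and define
\begin{equation*}
F\colon \mathbb R\times\mathbb R^n\longrightarrow \tilde M,\qquad F(t,y)=\varphi_t\bigl(\iota_0(y)\bigr).
\end{equation*}
Through $\Phi$ this is a diffeomorphism, and by (\ref{metric-horos-coord}) we have $F^*\tilde g=dt^2+k_t$, where $k_t:=\iota_0^*\varphi_t^*h_t$ is a (flat) Riemannian metric on $\mathbb R^n$; the task is to compare $k_t$ with the family $X\mapsto |e^{tA}X|^2$ appearing in $g_A$.

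\medskip

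\noindent\textbf{Step 1: $k_t$ has constant coefficients.} The crucial input is Corollary~\ref{Cor}: the Euclidean parallel transport $P^\xi$ on each horosphere commutes with the geodesic flow $\varphi_t$. In the canonical trivialization of the tangent bundle of the Euclidean horosphere $H_\xi(s)$, parallel transport between any two points is the identity. Hence $D\varphi_t(x)\colon T_xH_\xi(0)\to T_{\varphi_t(x)}H_\xi(t)$ is independent of $x$ once both tangent spaces are identified with $\mathbb R^n$ via their flat structures (with basepoints on the geodesic $c_{\tilde v}$). Consequently $k_t$ is translation-invariant on $\mathbb R^n$, i.e.\ of the form $k_t(y)(X,X)=|L_tX|^2$ for some $L_t\in GL(n,\mathbb R)$ depending smoothly on $t$, exactly mirroring the situation in the Heintze group computation of Subsection~\ref{Heintze}.

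\medskip

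\noindent\textbf{Step 2: Exact match at integer multiples of $l$.} Next I compute $k_{kl}$ for $k\in\mathbb Z$ using the self-map $\psi=\gamma\circ\varphi_l$ of $H_\xi(0)$. Since $\gamma$ is an isometry of $\tilde M$ that commutes with $\varphi_t$, the iterate $\psi^k=\gamma^k\circ\varphi_{kl}$ restricts to an isometry-up-to-isometry conjugation; more precisely, for $y\in\mathbb R^n$ and $X\in\mathbb R^n$,
\begin{equation*}
k_{kl}(y)(X,X)=h_{kl}\bigl(\varphi_{kl}\iota_0(y)\bigr)\bigl(D\varphi_{kl}X,D\varphi_{kl}X\bigr)
=h_0\bigl(\psi^k\iota_0(y)\bigr)\bigl(D\psi^k X,D\psi^k X\bigr),
\end{equation*}
where I have used that $\gamma^k\colon H_\xi(kl)\to H_\xi(0)$ is an isometry. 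Via $\iota_0$ the right hand side becomes $|D\psi^k(y)X|^2$. By Step~1, $D\psi^k(y)$ is independent of $y$, so equals $D\psi^k(0)=e^{klA}$ by (\ref{formula}). Therefore $k_{kl}(y)(X,X)=|e^{klA}X|^2$ identically on $\mathbb R^n$.

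\medskip

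\noindent\textbf{Step 3: Uniform comparison on the intervening intervals.} For arbitrary $t$, write $t=kl+s$ with $s\in[0,l]$ and $k\in\mathbb Z$. Using $\varphi_t=\varphi_s\circ\varphi_{kl}$ together with $\varphi_{kl}=\gamma^{-k}\circ\psi^k$ and the commutation $\gamma\circ\varphi_s=\varphi_s\circ\gamma$, the same argument as in Step~2 yields
\begin{equation*}
k_t(y)(X,X)=k_s\bigl(\psi^k(y)\bigr)\bigl(e^{klA}X,e^{klA}X\bigr)=\bigl|L_s\,e^{klA}X\bigr|^2.
\end{equation*}
Now the smooth family $s\mapsto L_s\circ e^{-sA}$ is defined on the compact interval $[0,l]$ and equals the identity at $s=0$ and at $s=l$ (by Step~2); in particular its operator norm and that of its inverse are bounded by some constant $C=C(l,A)\geq 1$. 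Combining these bounds with the expression for $k_t$ above gives
\begin{equation*}
C^{-2}\bigl|e^{tA}X\bigr|^2\leq k_t(y)(X,X)\leq C^2\bigl|e^{tA}X\bigr|^2
\end{equation*}
uniformly in $t\in\mathbb R$, $y\in\mathbb R^n$ and $X\in\mathbb R^n$. Since the $dt^2$ parts coincide, this proves $C^{-2}g_A\leq F^*\tilde g\leq C^2 g_A$, so $F$ is bi-Lipschitz, hence a quasi-isometry.

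\medskip

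\noindent\textbf{Main obstacle.} The delicate point is Step~1: converting the abstract flow-invariance of the parallel transport from Corollary~\ref{Cor} into the concrete statement that $\varphi_t$ restricted to the flat horosphere $H_\xi(0)$ is an affine map in Euclidean coordinates (equivalently that $D\varphi_t$ is ``constant''). Once this is established, Steps~2 and~3 are essentially algebraic manipulations exploiting the periodic structure provided by $\gamma$.
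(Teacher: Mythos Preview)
Your proof is correct and follows essentially the same route as the paper's. The paper also uses the diffeomorphism $\Phi(t,x)=\varphi_t(x)$, proves the analogue of your Step~2 as Lemma~\ref{coincidence-integer} (using the commutation identity $D\varphi_{lk}(x_0)\circ P_0^\xi(x,x_0)=P_{lk}^\xi(x_{lk},q_{lk})\circ D\varphi_{lk}(x)$, which is exactly your Step~1 specialized to $t=lk$), and then interpolates over $[lk,l(k+1)]$ by bounding $e^{\pm l\sigma A}$ and $D\varphi_{l\sigma}$ for $\sigma\in[0,1)$. Your organization---first isolating the translation-invariance of $k_t$ for \emph{all} $t$, then specializing---is a mild repackaging, not a different argument; the ``main obstacle'' you flag is resolved precisely by property~(3) of Definition~\ref{def-stable-holonomy2} combined with Corollary~\ref{Cor}, just as in the paper. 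One cosmetic point: $L_t$ is determined by $k_t$ only up to left orthogonal multiplication, so the parenthetical claim that $L_s e^{-sA}$ equals the identity at $s=l$ is not quite right, but this is irrelevant to the bound you need, which only uses continuity of $s\mapsto k_s$ and $s\mapsto (e^{sA})^T e^{sA}$ on $[0,l]$.
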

\noindent{\it Proof.}
In fact, we will show that there is a bi-Lipschitz diffeomorphism between $G_A$ and $\tilde M$.
Recall that the map $\Phi : \mathbb R \times H_\xi (0) \rightarrow \tilde M$ defined by $\Phi (s,x) = \varphi _s (x)$ is a diffeomorphism.

By Corollary \ref{Cor}, the horosphere $H_\xi (0)$ endowed with the induced metric from $\tilde M$ is flat, hence, 
$\mathbb R \times H_\xi (0) = \mathbb R \times \mathbb R ^n$ and therefore
we can see $\Phi$
as a diffeomorphism between $G_A$ and $\tilde M$. 
 
\smallskip

We first show that the two metrics $\Phi ^* \tilde g$ and $g_A$
coincide at points with coordinates $(lk,y)$ where $k$ is an integer. 
\begin{Lem}
\label{coincidence-integer}
For every $k\in \mathbb Z$ and $y\in \mathbb R^n$, we have 
$\Phi ^ * \tilde g (lk,y)= g_A(lk,y)$.
\end{Lem}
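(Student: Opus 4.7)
The plan is to pull back to the horospherical coordinates on $\mathbb{R}\times H_\xi(0)=\mathbb{R}\times\mathbb{R}^n$ and compare the two metrics slice by slice. Both $\Phi^*\tilde g$ and $g_A$ split as $dt^2\oplus(\text{metric on }\mathbb{R}^n)$ at every point, and the $dt^2$ factor matches trivially, so we only need to show that for every integer $k$,
\begin{equation}
\varphi_{lk}^{*}h_{lk}(y) \;=\; h_{\mathrm{eucl}}(e^{lkA}\cdot,\,e^{lkA}\cdot)\qquad \text{on }T_y\mathbb{R}^n,
\end{equation}
for every $y\in\mathbb{R}^n$.

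First I would establish constancy in $y$. By Corollary \ref{Cor}, every horosphere is Euclidean and the parallel transport $P_s^\xi$ coincides with the stable holonomy $\Pi_s^\xi$. Applied to property (3) of Definition \ref{def-stable-holonomy2}, this gives
\[
P_{lk}^\xi(\varphi_{lk}(x),\varphi_{lk}(y))\circ D\varphi_{lk}(x) \;=\; D\varphi_{lk}(y)\circ P_0^\xi(x,y),
\]
i.e.\ $\varphi_{lk}$ intertwines the two parallel transports. Equivalently, the pulled-back Levi-Civita connection $\varphi_{lk}^{*}\nabla^{lk}$ on $\mathbb{R}^n$ equals the Euclidean connection $\nabla^0$. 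Now $\varphi_{lk}^{*}h_{lk}$ is a flat metric on $\mathbb{R}^n$ whose Levi-Civita connection is the standard Euclidean one; by a standard rigidity fact ($\nabla^0$-parallelism forces constant coefficients in the $y$-coordinates), the bilinear form $\varphi_{lk}^{*}h_{lk}$ is therefore independent of $y\in\mathbb{R}^n$. Consequently it is enough to verify \eqref{metric-horos-coord} at the single point $y=x_0$.

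Next I would compute at $y=x_0$. Because $\gamma$ has axis $c_{\tilde v}$ and satisfies $D\gamma(\tilde g_l(\tilde v))=\tilde v$, we have $\gamma(c_{\tilde v}(l))=x_0$, so $\psi(x_0)=\gamma\circ\varphi_l(x_0)=x_0$ and, by iteration, $\psi^k(x_0)=x_0$, while $D\psi^k(x_0)=e^{lkA}$ by \eqref{formula}. Factoring $\psi^k=\gamma^k\circ\varphi_{lk}$ and using that $\gamma^k$ is a Riemannian isometry of $\tilde M$ sending $H_\xi(lk)$ onto $H_\xi(0)$ isometrically, we obtain, for any $X\in T_{x_0}\mathbb{R}^n$,
\begin{align*}
\varphi_{lk}^{*}h_{lk}(x_0)(X,X)
&= h_{lk}\bigl(\varphi_{lk}(x_0)\bigr)\bigl(D\varphi_{lk}(x_0)X,\,D\varphi_{lk}(x_0)X\bigr)\\
&= h_{0}(x_0)\bigl(D\psi^k(x_0)X,\,D\psi^k(x_0)X\bigr)
\;=\; h_{\mathrm{eucl}}(e^{lkA}X,\,e^{lkA}X),
\end{align*}
since $h_0=h_{\mathrm{eucl}}$. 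Combined with the previous paragraph this yields \eqref{metric-horos-coord} at every $(lk,y)$, which proves the lemma.

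The one place where real work is hidden is the constancy step: one must be confident that ``parallel transport commutes with $\varphi_{lk}$'' (a consequence of Corollary \ref{Cor}) really upgrades to ``$\varphi_{lk}^{*}h_{lk}$ has the Euclidean connection as Levi-Civita connection'', and then that a flat metric on $\mathbb{R}^n$ with standard connection is automatically a constant bilinear form. Once this is in hand the rest is bookkeeping with the already-proved identity $D\psi^k(x_0)=e^{lkA}$ and the isometry property of $\gamma$.
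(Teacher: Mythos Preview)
Your proof is correct and follows the same overall strategy as the paper: both dispose of the $dt^2$ factor trivially, invoke the commutation property (3) of Definition~\ref{def-stable-holonomy2} (via Corollary~\ref{Cor}) to handle the dependence on $y$, and finish with the computation at the axis point $x_0$ using that $\gamma^k$ is an isometry together with $D\psi^k(x_0)=e^{lkA}$ from~\eqref{formula}. The only organisational difference lies in the constancy-in-$y$ step. You argue abstractly that the intertwining relation forces $\varphi_{lk}^*\nabla^{lk}=\nabla^0$, hence the pulled-back metric $\varphi_{lk}^*h_{lk}$ is $\nabla^0$-parallel and therefore has constant coefficients. The paper instead works directly at a general point $x$: since $P_{lk}^\xi(x_{lk},q_{lk})$ is an $h_{lk}$-isometry, one may transport $D\varphi_{lk}(x)X$ to the axis point $q_{lk}=\varphi_{lk}(x_0)$ at no cost, then apply the commutation relation $P_{lk}^\xi(x_{lk},q_{lk})\circ D\varphi_{lk}(x)=D\varphi_{lk}(x_0)\circ P_0^\xi(x,x_0)$ and observe that $P_0^\xi(x,x_0)X=X$ for a constant vector field $X$. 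The paper's route is a touch more direct (it bypasses the detour through connections and the ``constant-coefficient'' lemma), while yours isolates the constancy step more conceptually; the underlying mathematics is identical.
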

\noindent{\it Proof.} It is clear that  for tangent vectors of the form 
$Z=(a,0)$, we have $\tilde g (Z,Z) = g_A(Z,Z) = a^2$ at {\it any } point of coordinate $(t,x)$.
Therefore, we now focus on tangent vectors of the type $Z=(0,X)$, where 
$X \in \mathbb R^n$ is a vector tangent to $H_\xi (0)= \mathbb R^n$ at $x$.
By (\ref{g_T}), it suffices to show that 
\begin{equation}
\Phi ^* \tilde g (lk,x)(Z,Z) = h_{\mathrm{eucl}} (e^{lkA }X, e^{lkA} X).
\end{equation}
In fact, it follows from (\ref{metric-horos-coord}) that  
\begin{equation}\label{tildeg}
\Phi ^* \tilde g (lk,x)(Z,Z) = h_{lk} (d\varphi _{lk} (X), d\varphi _{lk} (X)),
\end{equation}
where $d\varphi _{lk} (X)$ is a vector tangent to $H_\xi (lk)$ at $x_{lk} = \varphi _{lk} (x)$, and $h_{lk}$ is the
flat metric of $H_\xi (lk)$.
Note that the tangent vector $X$ can be extended to a constant vector field on $\mathbb R^n$, which we will still denote by $X$. 

\smallskip

Recall (see Section~\ref{connection} that for each integer $k$, $P_{lk}^\xi$ is the parallel transport associated to the flat
metric $h_{lk}$ on $H_\xi (lk)$, and that $x_0=c_v (0)$ is the unique point on $H_\xi (0)$ which lies on the axis of 
$\gamma$. Let us denote by $q_{lk}$  the point $\varphi _{lk} (x_0)$.
We thus have 
\begin{equation}
\label{estimate-h_k}
h_{lk} (d\varphi _{lk} (X), d\varphi _{lk} (X)) = h_{lk} \left(P_{lk} ^\xi (x_{lk} ,q_{lk})(d\varphi _{lk} (X)), P_{lk} ^\xi (x_{lk} ,q_{lk})(d\varphi _{lk} (X))\right).
\end{equation}
By assumption (ii) of Theorem \ref{main-thm}, the parallel transport of the flat metric $h_0=h_{eucl}$ ($h_{lk}$) coincides with
the stable holonomy $\Pi_0 ^\xi$ ($\Pi_{lk} ^\xi$) . In particular, the commutation property (3) of the Definition \ref{def-stable-holonomy2} holds:
\begin{equation}\label{commute}
d\varphi _{lk} (x_0) \circ P^\xi_0 (x,x_0)(X)= P_{lk} ^\xi (x_{lk} ,q_{lk})(d\varphi _{lk} (X)) .
\end{equation}
Note that (\ref{commute}) relies on the fact that the family of parallel transports 
of the Levi-Civita connections coincide with the stable holonomies, hence is invariant by the geodesic flow
  and that it is the only place in the proof where we use it.
We now deduce from (\ref{commute}) that
\begin{equation}\label{estimate-h_k2}
h_{lk} (d\varphi _{lk} (X), d\varphi _{lk} (X)) = h_{lk} \left (d\varphi _{lk} (x_0) \circ P^\xi_0 (x,x_0)(X),d\varphi _{lk} (x_0) \circ P^\xi_0 (x,x_0)(X)\right ).
\end{equation}
Since for every $k$, $\gamma ^k$ is an isometry 
we obtain
\begin{equation}\label{estimate-h_k3}
h_{lk} (d\varphi _{lk} (X), d\varphi _{lk} (X)) =h_{0} \left(d\gamma ^{k} \circ d\varphi _{lk} (x_0) (P^\xi_0 (x,x_0)(X)), d\gamma ^{k} \circ d\varphi _{lk} (x_0)(P^\xi_0 (x,x_0)(X))\right),
\end{equation}
thus, by (\ref{formula}),
\begin{equation}\label{estimate-h_k4}
h_{lk} (d\varphi _{lk} (X), d\varphi _{lk} (X)) =h_{0} \left(e^{lkA} (P^\xi_0 (x,x_0)(X)), e^{lkA} (P^\xi_0 (x,x_0)(X))\right).
\end{equation}
Since $H_\xi (0)$ with the induced metric from $\tilde M$ is identified with $\mathbb R^n$, $h_0$ with the standard Euclidean metric $h_{\mathrm{eucl}}$ and $X$ is a constant vector field, we have
$P^\xi_0 (x,x_0)(X) = X$ and 
\begin{equation}
\label{eq:twmore}
h_{0} \left(e^{lkA} \left(P^\xi_0 (x,x_0)(X)\right), e^{lkA} \left(P^\xi_0 (x,x_0)(X)\right)\right) = h_{\mathrm{eucl}} (e^{lkA} X, e^{lkA} X),
\end{equation}
which implies by (\ref{tildeg}) and (\ref{estimate-h_k4}) that
\begin{equation}
\label{eq:onemore}
\Phi ^* \tilde g (lk,x)(Z,Z) = h_{\mathrm{eucl}} (e^{lkA} X, e^{lkA} X) = g_A (lk,x)(Z,Z),
\end{equation}
which completes the proof of Lemma \ref{coincidence-integer}.
\eop{Lemma \ref{coincidence-integer}}

\medskip

For $t\in \mathbb R$, let $k$ be the integer part of $t/l$. We now compare  $g_A (t,x)$ and $g_A (lk,x)$ at any $x \in \mathbb R^n$. Let us set $\sigma = \frac{t}{l}-k$ with $\sigma \in [0, 1[$. For $Z=(0,X)$, we have 
\begin{equation}\label{integer-t}
g_A(t,x) (Z,Z)=h_{\mathrm{eucl}} (e^{tA} X, e^{tA} X) = h_{\mathrm{eucl}} (e^{l\sigma A} e^{lkA} X, e^{l\sigma A} e^{lkA} X)\,.
\end{equation}
Recall that  $e^{lA}=D (\gamma \circ \varphi_l)(x_0) = D \psi (x_0)$ is a fixed $n\times n$ matrix, so that there exists a constant $C$ such that
$\| e^{\pm l\sigma A} \|^2 \leq C$ for every $\sigma \in [0,1[$. Therefore, we deduce from (\ref{integer-t})

\begin{equation}\label{quasi-g_T}
C^{-1} g_A (lk,x) \leq g_A (t,x) \leq C g_A (lk,x) ,
\end{equation}
for every $lk \leq t < (k+1)l$.
On the other hand, we have 
$$h_{t} (D\varphi _t X, D\varphi _t X) = h_{t} ( D\varphi _{l \sigma}  \circ D\varphi _{lk} X, d\varphi _{l\sigma}  \circ D\varphi _{lk} X)$$ and the same argument as before yields,
\begin{equation}\label{quasitilde-g}
C^{-1} \Phi ^* \tilde g (lk,x) \leq \Phi ^* \tilde g(t,x) \leq C \Phi ^* \tilde g (lk,x)\,.
\end{equation}
Then the relations (\ref{quasi-g_T}), (\ref{quasitilde-g}) and Lemma \ref{coincidence-integer} conclude the proof of Theorem \ref{quasi-isom}.

\eop{Theorem~\ref{quasi-isom}}

\medskip

\begin{Cor}
\label{T=Id}
All the eigenvalues of $T = D\psi (x_0)$ have the same modulus. 

\end{Cor}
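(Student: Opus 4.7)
The corollary will follow by combining Theorem \ref{quasi-isom} with the Milnor--\v{S}varc lemma and Xie's Theorem \ref{G_A-hyperbolic}. Since $M$ is closed, the Milnor--\v{S}varc lemma gives that $\tilde M$ is quasi-isometric to its fundamental group $\pi_1(M)$ equipped with any word metric, and $\pi_1(M)$ is finitely generated. By Theorem \ref{quasi-isom}, $(\tilde M, \tilde g)$ is bi-Lipschitz diffeomorphic, hence quasi-isometric, to $(G_A, g_A)$. Composing the two quasi-isometries yields that $G_A$ is quasi-isometric to the finitely generated group $\pi_1(M)$.

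To invoke Xie's theorem we first verify its hypothesis on the signs of the real parts of the eigenvalues of the defining matrix. By construction $T = e^{lA} = D\gamma \circ D\varphi_l(x_0)$, where $D\gamma$ is a Riemannian isometry and $D\varphi_l$ is a strict contraction on the horosphere $H_\xi(0)$ by Lemma \ref{contraction-dilation}(1). Hence every eigenvalue of $T$ has modulus strictly less than one, so that every eigenvalue of $A$ has strictly negative real part; in particular, the real parts all have the same sign, and after the standard sign adjustment $A \mapsto -A$ (which leaves the conclusion about eigenvalue moduli invariant, and does not affect the underlying quasi-isometric picture) the hypothesis of Theorem \ref{G_A-hyperbolic} is satisfied.

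Xie's theorem then forces the real Jordan form of $A$ to be a scalar multiple of the identity matrix. Consequently $A = cI$ for some real $c$, and $T = e^{lA} = e^{lc}\,I$; all eigenvalues of $T$ are therefore equal to $e^{lc}$, and in particular share the same modulus. The substantive work of the argument is concentrated in Theorem \ref{quasi-isom}, already established above; the main obstacle was the construction of the bi-Lipschitz diffeomorphism between $\tilde M$ and $G_A$ via the horospherical coordinates, once that is in hand the present corollary amounts to a direct invocation of Xie's quasi-isometric rigidity result, with only a cosmetic sign check to match its conventions.
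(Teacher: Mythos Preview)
Your proof is correct and follows the same route as the paper: Theorem~\ref{quasi-isom} plus Milnor--\v{S}varc to get $G_A$ quasi-isometric to the finitely generated group $\pi_1(M)$, then Xie's Theorem~\ref{G_A-hyperbolic}. You are a bit more careful than the paper in explicitly checking the sign hypothesis on the real parts of the eigenvalues of $A$, and you draw the (correct) stronger conclusion $A=cI$ from Xie's statement rather than merely ``equal real parts,'' but the argument is otherwise identical.
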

\begin{proof}
By Theorem \ref{quasi-isom}, $(G_A, g_A)$ is quasi-isometric to $(\tilde M, \tilde g)$. Since $M$ is closed,  $(\tilde M, \tilde g)$ is quasi-isometric to the  finitely generated group $\pi_1 (M)$  endowed with the word metric, which is therefore a hyperbolic group. We thus deduce that $G_A$ is quasi-isometric to a hyperbolic group and by the theorem \ref{G_A-hyperbolic}, this can occur only if the real part of the complex eigenvalues of $A$ are equal.
Recall that $A$ has been chosen so that either $T = e^{lA}$ or $T^2 = e^{lA}$, where $T = D\psi (x_0)$. We deduce that the eigenvalues of $T$ have the same modulus.

\end{proof}

We are now in position to prove Theorem \ref{eigenvalues}, thus,  completing the proof of Theorem \ref{main-thm}.
\begin{proof}[Proof of Theorem \ref{eigenvalues}]

Theorem \ref{quasi-isom} holds for any choice of a closed geodesic, or equivalently of an element $\gamma$ of the fundamental group of $M$, and
so does Corollary \ref{T=Id}. This implies that for any such choice, the moduli of the complex eigenvalues of $T= D\psi (x_0)$ coincide.

Recall that 
\begin{equation}\label{A1}
D\psi (x_0)= e^{l(v)A} = D\tilde p \, \circ \left( D(\gamma \circ \tilde g _{l(v)}(\tilde v)|E^{ss}(\tilde v) \right) \circ D \tilde p ^{-1},
\end{equation}
so that $D g_{l(v)} |E^{ss}$ and $D\psi (x_0)$ are conjugate matrices, therefore we conclude that the eigenvalues of $D g_{l(v)} |E^{ss}$ have the same modulus. 
\end{proof}

\section{Appendix}
The goal of this appendix is twofold. We first will show that the strong $1/4$-pinching assumption implies the bunching of the stable cocycle of the geodesic flow defined in \cite{KS}. Then we will show that the stable holonomy
on the horospheres is conjugate to the stable holonomy defined on the strong stable leaves of the geodesic flow.
\subsection{Strong $1/4$-pinching and bunching}
Under the assumption $-4(1-\tau) \leq K \leq -1$, the strong stable bundle $P: E^{ss} \rightarrow T^1 M$ is
$C^1$ (see  \cite[page 226]{Hi-Pu-Sh}). We choose a $C^1$-metric on $T^1M$ such that the splitting
$TT^1M = E^{ss} \oplus \mathbb{R} \,Z \oplus E^{su}$ is orthogonal, the generator $Z$ of the geodesic flow
satisfies $|Z| =1$ and the metric on $E^{ss}$ and $E^{su}$ are obtained by pulling back the metric of $M$
by the projection $p: T^1 M \to M$.
We consider now the diffeomorphism $f:= g_1 : T^1 M \rightarrow T^1 M$ 
The linear stable cocycle over $f$ defined as
$F:=(Dg_{1}) _{| E^{ss}}$ is also $C^1$ and satisfies 
\begin{equation}\label{cocycle1}
\|F(v)\| \leq e^{-1} \,\,\, {\rm and} \,\,\, \|(F(v))^{-1} \| \leq e^{2(1-\tau)^{1/2}}.
\end{equation}
With the notations of section 2 in \cite{KS}, denoting $\nu (v) := e^{-1}$, $\hat{\nu} (v) := e^{-1}$ we then have $\|Df (v)\| \leq \nu (v)$,
$\|(Df (v))^{-1}\| \geq (\hat{\nu} (v))^{-1}$ and $|Df (Z)| =1$, hence
\begin{equation}\label{appendixbunch}
\|F(v)\| \|(F(v))^{-1} \| \nu (v) \leq e^{-1} \,e^{2(1-\tau)^{1/2}} e^{-1} <1.
\end{equation}
This inequality $\|F(v)\| \|(F(v))^{-1} \| \nu (v) <1$ coincides with the bunching condition (3.1) of \cite{KS} since 
we can take the H\"older coefficient $\beta =1$ since the strong stable bundle is $C^1$.
\subsection{Conjugation of stable holonomies}
Recall that the map $\Phi : T^1 \tilde M \to \tilde M \times \partial \tilde M$ defined by
$\Phi (\tilde v) = (x,\xi)$ where $x = \pi (\tilde v)$ and $\xi = c_{\tilde v} (+\infty)$ is a homeomorphism. By abuse of notation
we will write $\tilde v = (x, \xi)$. Given $\tilde v = (x, \xi)$, the projection $\tilde p : T^1 \tilde M \to \tilde M$ induces a diffeomorphism
between the strong stable leaf $W^{ss} (\tilde v)$ of $\tilde v$ and the horosphere $H_{\xi} (x)$ centered at $\xi$
and passing through $x$.  In particular, $D\tilde p (\tilde v)$ induces an isomorphism between 
$E^{ss} (\tilde v) = T_{\tilde v} W^{ss} (\tilde v)$ and $T_x H_{\xi} (x)$.
\begin{Lem}
Let $\tilde v = (x, \xi)$ and $\tilde w = (y, \xi)$ be on a same strong stable leaf $W^{ss} (\tilde v) \subset T^{1} \tilde M$ and $H_{\xi}(x)$ the horosphere
centered at $\xi$ and passing through $x$ and $y$. 
Then the stable holonomy $\mathcal{H} (\tilde v, \tilde w)$ on $W^{ss}(\tilde v)$ (resp. $\Pi ^\xi (x,y)$ on $H_{\xi} (x)$)
are conjugate,
$$\mathcal H (\tilde v , \tilde w) = (D\tilde p (\tilde w))^{-1} \circ \Pi ^{\xi} (x,y) \circ D\tilde p (\tilde v).$$
\end{Lem}
\begin{proof}
Define $\mathcal H (\tilde v , \tilde w) = (D\tilde p (\tilde w))^{-1} \circ \Pi ^{\xi} (x,y) \circ D\tilde p(\tilde v).$
The Properties (i), (ii) and (iii) of Definition 4.1 for $\mathcal H (\tilde v, \tilde w)$ are consequences of
the corresponding properties of $\Pi ^\xi (x,y)$ stated in Proposition \ref{parallel-transport}.
As stated in the Proposition 4.2 (c), \cite{KS}, the stable holonomy $\mathcal H$ is uniquely determined by the property
that 
\begin{equation}\label{local-identification}
\| \mathcal H (\tilde v , \tilde w) - I (\tilde v, \tilde w)\| \leq C d(\tilde v, \tilde w),
\end{equation}
 where 
$I(\tilde v, \tilde w)$ is a local identification between $E^{ss}(\tilde v)$ and $E^{ss}(\tilde w)$.
On the other hand, as noticed at the bottom of page 173 of \cite{KS}, holonomies do not depend on
the choice of the local identifications. By defining
$$
I (\tilde v, \tilde w) := (D\tilde p (\tilde w))^{-1} \circ P(x,y) \circ D\tilde p (\tilde v),
$$
where $P(x,y)$ is the parallel transport along $H_\xi (x,y)$, we see that
the property (ii) of Proposition \ref{parallel-transport} implies (\ref{local-identification}).
Therefore, the properties (a), (b) and (c) of \cite{KS} Proposition 4.2 are satisfied, which concludes the proof of this lemma.

\end{proof}


\begin{thebibliography}{20}

\bibitem{AV}
 A.~Avila and M.~Viana,
 Extremal {L}yapunov exponents: an invariance principle and applications,
 {\em Invent. Math.} 181, (1), 115-189, 2010.
      
\bibitem{An}
D.~V. Anosov, 
Geodesic flows on closed Riemannian manifolds of negative curvature,
{\em Trudy Mat. Inst. Steklov.} 90, (1967).

\bibitem{ArAv}
V.~I. Arnold and A.~Avez, Probl{\`e}mes ergodiques de la m{\'e}canique classique,
Monographies Internationales de Math{\'e}matiques Modernes, No. 9.
  Gauthier-Villars, \'Editeur, Paris, 1967.

\bibitem{ASV}
A.~Avila, J.~Santamaria, and M.~Viana,
Holonomy invariance: rough regularity and applications to Lyapunov exponents, 
{\em  Ast{\'e}risque} No. 358, 13--74 (2013).

\bibitem{FoL}
P.~Foulon and F.~Labourie, Sur les vari{\'e}t\'es compactes asymptotiquement harmoniques, {\em Invent. Math.} No. 109, 97--111 (1992). 


\bibitem{BCG1}
G.~Besson, G.~Courtois, and S.~Gallot, 
Entropies et rigidit\'es des espaces localement sym\'etriques de courbure strictement n\'egative, 
{\em  Geom. Funct. Anal.}, No. 5, 731--799 (1995).

\bibitem{BMV}
C.~Bonatti, X.~G\'{o}mez-Mont and M.~Viana.
G\'{e}n\'{e}ricit\'{e} d'exposants de {L}yapunov non-nuls pour des produits d\'{e}terministes de matrices,
{\em Ann. Inst. H. Poincar\'{e} C Anal. Non Lin\'{e}aire},Vol. 20, No 4, 579-624, 2003.


\bibitem{BW}
K.~Burns, A.~Wilkinson,
On the ergodicity of partially hyperbolic systems,
{\em Annals of Math.(2)}, No. 171, 451--489 (2010).

\bibitem{B}
C.W.~Butler,
Rigidity of equality of Lyapunov exponents for geodesic flows,
{\em J. Differential Geometry}, No. 109, 39--79 (2018).

\bibitem{cul}
W.J.~Culver,
On the existence and uniqueness of the real logarithm of a matrix,
{\em Proc. Am. Math. Soc.}, No. 17, 1146--1151 (1966).

\bibitem{HB}
B.~ Hasselblatt,
Horospheric foliations and relative pinching, 
{\em J. Differential Geometry}, No. 39, 57--63 (1994).

\bibitem{HB2}
M.~Gerber, B.~Hasselblatt and D.~Keesing,
The Riccati Equations: Pinching of Forcing and Solutions,
{\em Experimental Mathematics}, No. 12, 129--134 (2003).

\bibitem{He}
E.~ {Heintze},
Compact quotients of homogeneous negatively curved Riemannian manifolds.
{\em Math. Z.}, No. 140, 79--80 (1974).

\bibitem{He1}
E.~Heintze,
On homogeneous manifolds of negative curvature,
{\em Math. Ann.}, No. 211, 23--34 (1974).

\bibitem{Im-Hof}
E.~Heintze and H.~Christoph Im Hof, 
Geometry of horospheres,
{\em J. Differential Geometry}, No. 12, 481--491 (1977).

\bibitem{Hi-Pu-Sh}
M.W.~Hirsch and C.C.~Pugh,
Smoothness of horocycle foliations,
{\em J. Differential Geometry}, No. 10, 225--238 (1975).

\bibitem{KS}
B.~Kalinin and V.~ Sadovskaya,
Cocycles with one exponent over partially hyperbolic systems,
{\em Geom. Dedicata}, No.167, 167--188 (2013).

\bibitem{Kan}
M.~Kanai,
Differential-geometric studies on dynamics of geodesic and frame flows.
{\em Japan. J. Math. (N.S.)}, No. 19, 1--30 (1993).

\bibitem{Lee}
J.~M. Lee,
Introduction to Riemannian Manifolds, Second Edition.
{\em Graduate Texts in Mathematics, 176}, Springer, Cambridge (2018).


\bibitem{Mos}
G.~D. Mostow,
Quasi-conformal mappings in {$n$}-space and the rigidity of
  hyperbolic space forms,
{\em Inst. Hautes {\'E}tudes Sci. Publ. Math.}, No. 34, 53--104 (1968).

\bibitem{Shub}
M.~Shub,
Global stability of dynamical systems,
Springer-Verlag, New York, 1987.
With the collaboration of Albert Fathi and R{{\'e}}mi Langevin,
  Translated from the French by Joseph Christy.
  
\bibitem{Viana}
M.~Viana,
Almost all cocycles over any hyperbolic system have non vanishing Lyapunov exponents,
{\em Ann. of Math.} No. 167, 643--680 (2008).

\bibitem{Xie1}
X.~Xie.
Large scale geometry of negatively curved {$\Bbb{R}^n\rtimes\Bbb{R}$},
 {\em Geom. Topol.}, No. 18, 831--872 (2014).
 


\end{thebibliography}
\end{document}